\numberwithin{equation}{section}
\newtheorem{theorem}{Theorem}[section]
\newtheorem{corollary}[theorem]{Corollary}
\newtheorem{lemma}[theorem]{Lemma}
\newtheorem{definition}[theorem]{Definition}
\newtheorem{remark}[theorem]{Remark}
\newtheorem{example}[theorem]{Example}
\newcommand{\IN}{{\mathbb{N}}}
\renewcommand{\phi}{\varphi}
\renewcommand{\epsilon}{\varepsilon}
\newcommand{\eps}{\varepsilon}
\newcommand{\ltn}{\ensuremath{\left| \! \left| \! \left|}}
\newcommand{\rtn}{\ensuremath{\right| \! \right| \! \right|}}
\newcommand{\aM}{\mathcal{M}}
\newcommand{\aP}{\mathcal{P}}
\newcommand{\aI}{\mathcal{I}}
\newcommand{\aL}{\mathcal{L}}
\newcommand{\oN}{\overline{N}}
\newcommand{\oM}{\overline{M}}
\newcommand{\ov}{\overline{v}}
\newcommand{\omegaa}{\,\omega^{(2)}}
\newcommand{\omegaan}{\,\omega^{(2),n}}
\newcommand{\yt}{\widetilde{y}}
\newcommand{\zt}{\widetilde{z}}
\newcommand{\yb}{\overline{y}}
\newcommand{\zb}{\overline{z}}
\newcommand{\Tb}{\overline{T}}
\newcommand{\wtu}{\widetilde{u}}
\newcommand{\wtv}{\widetilde{v}}
\newcommand{\qed}{\hfill $\Box$\smallskip}
\title{Local mild solutions for rough stochastic partial differential equations}
\author{Robert Hesse\thanks{Institute for Mathematics, Friedrich Schiller University Jena (FSU),  Ernst-Abbe-Platz 2, 07743 Jena, Germany. E-Mail: robert.hesse@uni-jena.de.}~~and~Alexandra 
	Neam\c tu\thanks{Technical University of Munich (TUM), 
		Faculty of Mathematics, 85748 Garching bei M\"unchen, Germany. E-Mail: alexandra.neamtu@tum.de.\newline\hspace*{3 mm} We are grateful to M. J. Garrido-Atienza and B. Schmalfu\ss{} for helpful comments. We thank the referee for carefully
		reading the manuscript and for the valuable suggestions.
		\newline\hspace*{3 mm}
		AN acknowledges support by a DFG grant in the
		D-A-CH framework (KU 3333/2-1). 	}}
\begin{document}
	\maketitle
	\begin{abstract}
		We investigate mild solutions for stochastic evolution equations driven by a fractional Brownian motion (fBm) with Hurst parameter $H\in(1/3,1/2]$ in infinite-dimensional Banach spaces. Using elements from rough paths theory we introduce an appropriate integral with respect to the fBm. This allows us to solve pathwise our stochastic evolution equation in a suitable function space.
	\end{abstract}

\textbf{Keywords}: stochastic evolution equations, rough paths theory, fractional Brownian motion.\\

\textbf{MSC}: 60H15, 60H05, 60G22.

	\section{Introduction}

In this work we develop a concise theory of rough evolution equations given by
\begin{align}\label{eq1}
\begin{cases}
d y_{t} = (A  y_{t} + F (y_{t}) )dt + G (y_{t}) d \omega_{t},~~ t\in[0,T] \\
y(0)=\xi.
\end{cases}
\end{align}
Here $T>0$, $A$ generates an analytic $C_{0}$-semigroup $(S(t))_{t\in[0,T]}$ on a separable Banach space $W$, the initial condition $\xi\in W$, $F:W\to W$ is a nonlinear term satisfying appropriate Lipschitz conditions and $G:W\to \mathcal{L}(V,W)$ is assumed to be three times continuously differentiable with bounded derivatives. The precise assumptions will be stated in Section \ref{preliminaries}. Finally, the random input $\omega$ is Gaussian process which has the regularity of a fractional Brownian motion with Hurst index $H\in(1/3,1/2]$. In order to solve (\ref{eq1}) 
we need to give a meaning of the rough integral
\begin{align}\label{integral}
\int\limits_{0}^{t} S(t-r)G(y_{r}) d\omega_{r}.
\end{align}

Results in this context are available in \cite{GarridoLuSchmalfuss2} via fractional calculus and in Gubinelli et al \cite{Gubinelli}, \cite{DeyaGubinelliTindel} \cite{GubinelliLejayTindel}, \cite{GubinelliTindel} using rough paths techniques. In this work, we combine Gubinelli's approach with the arguments employed by \cite{GarridoLuSchmalfuss2} to solve (\ref{eq1}). This theory should hopefully be more simple in order to investigate the long-time behavior of such equations using a random dynamical systems approach such in \cite{Arnold}, \cite{GarridoLuSchmalfuss1} or \cite{BailleulRiedelScheutzow}. In this work we establish only the existence of a local mild solution. We investigate in a forthcoming paper global solutions and random dynamical systems for (\ref{eq1}) as in \cite{GarridoLuSchmalfuss1}. This work should be seen as a first step in order to close the gap between rough paths and random dynamical systems in infinite-dimensional spaces.\\
	
	
Since the fractional Brownian motion is not a semi-martingale, the construction of an appropriate integral represents a challenging problem. This has been intensively investigated and numerous results and various techniques are available, see \cite{Zaehle}, \cite{GarridoLuSchmalfuss1}, \cite{Young}, \cite{HuNualart}, \cite{MaslowskiNualart} 
and the references specified therein. There is a huge literature where certain tools from fractional calculus (i.e. fractional/compensated fractional derivative/integral) are employed to give a pathwise meaning of the stochastic integral with respect to the fractional Brownian motion with Hurst parameter $H\in(1/2,1)$ or $H\in(1/3,1/2]$. A different method which has been recently introduced and explored is given by the rough path approach of Gubinelli et. al. \cite{Gubinelli}, \cite{GubinelliTindel}, \cite{GubinelliLejayTindel}. This goes through if $H\leq 1/2$. Moreover it is suitable to define (\ref{integral}) not only with respect to the fractional Brownian motion but also to Gaussian processes for which the covariance function satisfies certain structure, see \cite{FritzVictoir} or \cite[Chapter 10]{FritzHairer}. An overview on the connection between rough paths and fractional calculus can be looked up in \cite{HuNualart}. Of course, in some situations, various other techniques for $H\leq 1/3$ are available.\\

After using an appropriate integration theory with respect to the fractional Brownian motion, the next step is to analyze SDEs/SPDEs driven by this kind of noise. There is a growing interest in establishing suitable properties of the solution under several assumptions on the coefficients, consult  \cite{MaslowskiNualart}, \cite{MaslowskiS}, \cite{Hairer}, \cite{GLS}, \cite{GarridoLuSchmalfuss1}, \cite{Zaehle} and the references specified therein.\\

To our aims we combine techniques from the rough path theory used in \cite{GubinelliLejayTindel}, \cite{GubinelliTindel}, \cite{DeyaGubinelliTindel} with the approach used in \cite{GarridoLuSchmalfuss2}. Therefore, we can treat stochastic evolution equations under more general assumptions on the coefficients. Note that in our case $F$ and $G$ are nonlinear operators defined on suitable function spaces. We do not impose additional regularity assumptions on the covariance operator of the fractional Brownian motion as in \cite{MaslowskiNualart}. Another advantage of this theory is that we do not have to require that $G$ is a Hilbert-Schmidt operator like in \cite{GarridoLuSchmalfuss2}. This is demanded there to define (\ref{integral}) as a series of one-dimensional integrals. Comparing with \cite{GubinelliTindel} the main difference of our approach is that we work in a different function space which enables us to consider arbitrary initial conditions for (\ref{eq1}). This is crucial when working with random dynamical systems, compare \cite{GarridoLuSchmalfuss2} and will be explored in a future work. Due to this reason we have to provide a meaningful construction of (\ref{integral}) in this new function space.\\
	
	This work is structured as follows. In Section \ref{preliminaries} we collect well-known properties and estimates of analytic semigroups which are necessary in this framework. Section \ref{heuristics} provides the very general intuition of the required techniques.
	It describes the story in a nutshell and gives an insight in the rough path theory pointing out the main obstacles which occur in the infinite-dimensional setting. We state basic concepts and indicate how an appropriate pathwise integral should be constructed and how a solution of a rough evolution equation should look like. The next sections rigorously justifies the steps presented in Section \ref{heuristics} as follows.  Section \ref{sectsl} is the core of this work. Here we introduce a modified version of the Sewing Lemma, compare \cite{GubinelliTindel},  \cite{GubinelliLejayTindel}. This is a very general fundamental result which entails the existence of a rough integral in a suitable analytic and algebraic framework. This is, of course, the first main ingredient that we need in order to solve (\ref{eq1}). As already emphasized, in contrast to \cite{GubinelliTindel}, we work with modified H\"older spaces as introduced in Section \ref{preliminaries}. This version of the Sewing Lemma precisely fits in this setting.
	Section \ref{sec:supp:proc} is devoted to the construction of the supporting processes which are necessary to give an appropriate meaning of the rough integral. Inspired by  \cite{GarridoLuSchmalfuss2}, in order to define the supporting processes we first consider smooth approximations of the noise and thereafter pass to the limit.
	The existence of the corresponding processes is derived via classical tools, such as an integration by parts formula or using the Sewing Lemma introduced in Section \ref{sectsl}.
	For a better comprehension, we point out an example in which one can construct a pathwise integral using the integration by parts formula as well as the Sewing Lemma. Section \ref{sectfp} deals with the fixed-point argument and contains the main result of this work. Since certain a-priori estimates presented in Section \ref{sectfp} contain quadratic terms one cannot immediately conclude the existence of a global solution. This problem will be solved in a forthcoming paper under suitable assumptions on the coefficients using regularizing properties of analytic semigroups. We present an application of our theory in Section \ref{sect:app}. Finally, we collect some important results and computations in two appendices.
		\section{Preliminaries}\label{preliminaries}
			We fix $T>0$, set $\Delta:=\Delta_{T} := \left\{\left(t,s \right) \colon T\geq t > s \geq 0\right\}$ and let  $V$ stand for a Hilbert space and let $W$ denote a separable Banach space. For notational simplicity, if not further stated, we write  $\left| \cdot \right|$ for the norm of an arbitrary Banach space.  Furthermore $C$ denotes a universal constant which varies from line to line. The explicit dependence of $C$ on certain parameters will be precisely stated, whenever required.\\ 

			We firstly describe the noisy input. To this aim we recall the following essential concept in the rough path theory.
			\begin{definition}\label{hrp}\emph{($\alpha$-H\"older rough path)}
			We call a pair $(B,\mathbb{B})$ $\alpha$-H\"older rough path if $B$ is $V$-valued and $\alpha$-H\"older continuous, $\mathbb{B}$ is $V\otimes V$-valued and $2\alpha$-H\"older continuous.  Furthermore, $B$ and $\mathbb{B}$ are connected via Chen's relation, meaning that
			\begin{align*}
			\mathbb{B}_{ts} - \mathbb{B}_{us} - \mathbb{B}_{tu} = (B_{u} - B_{s})\otimes ( B _{t}- B_{u}) ,~~ \mbox{for } 0\leq s \leq u \leq t\leq T.
			\end{align*}
			In the literature $\mathbb{B}$ is referred to as L\'evy-area or second order process.	
			\end{definition}	
		
		We further describe an appropriate \emph{distance} between two $\alpha$-H\"older rough paths.
		\begin{definition}
			Let $(B,\mathbb{B})$ and $(\widetilde{B},\widetilde{\mathbb{B}})$ be two $\alpha$-H\"older rough paths. We introduce the $\alpha$-H\"older rough path (inhomogeneous) metric
			\begin{align}\label{rp:metric}
			d_{\alpha,[0,T]}((B,\mathbb{B}),(\widetilde{B},\widetilde{\mathbb{B}}) )
			:= \sup\limits_{(t,s)\in \Delta} \frac{|B_{t}-B_{s}-\widetilde{B}_{t}+\widetilde{B}_{s}|}{|t-s|^{\alpha}} 
			+ \sup\limits_{(t,s) \in \Delta}
			\frac{|\mathbb{B}_{ts}-\widetilde{\mathbb{B}}_{ts}|} {|t-s|^{2\alpha}}.
			\end{align}
			We set $d_{\alpha,T}:=d_{\alpha,[0,T]}$.
		\end{definition}
		
			 For more details on this topic consult~\cite[Chapter 2]{FritzHairer}.\\
			
			  We now emphasize the connection between the fractional Brownian motion and an $\alpha$-H\"older rough path. To this aim, let $\omega(\cdot)$ stand for a $V$-valued trace-class fractional Brownian motion with Hurst index $H\in(1/3,1/2]$. This means that
		\begin{align}
		\omega(t) = \sum\limits_{n=1}^{\infty}\sqrt{\lambda_{n}} b^{H}_{n}(t) e_{n}, 
		\end{align}
		where $(b^{H}_{n}(\cdot))_{n\in\mathbb{N}}$ is a sequence of one-dimensional independent standard fractional Brownian motions with the same Hurst parameter $H$ and $\sum\limits_{n=1}^{\infty}\lambda_{n}<\infty$.

\begin{remark}
	We stress that the trace-class condition is the only requirement we impose on the covariance of the fractional Brownian motion in contrast to \cite{MaslowskiNualart}.
\end{remark}

We point out the following result which will intensively be used in Section \ref{sec:supp:proc}.
		\begin{lemma}
Let $\omega$ be a $V$-valued trace class fractional Brownian, as introduced above. Then there exists a L\'evy-area $\omegaa$ such that the $\alpha$-H\"older rough path $(\omega,\omegaa)$ can be approximated by $(\omega^{n},\omegaan)$ with respect to the $d_{\alpha,T}$-metric. Here $(\omega^{n})_{n\in\mathbb{N}}$ is a sequence of piecewise linear functions and 
	\begin{align*}
	\omegaan_{ts}:=\int\limits_{s}^{t} ( \omega^{n}_{r} -\omega^{n}_{s}) \otimes d\omega^{n}_r.
	\end{align*}
	\end{lemma}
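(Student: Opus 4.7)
The plan is to reduce the construction to the well-known one-dimensional/finite-dimensional situation by truncating in the orthonormal basis $(e_n)_{n\geq 1}$, and then exploit the trace-class hypothesis $\sum_n\lambda_n<\infty$ to pass to the infinite-dimensional limit in $V\otimes V$.

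For each $N\in\IN$ set $\omega^{[N]}(t):=\sum_{n=1}^{N}\sqrt{\lambda_n}\,b_n^H(t)\,e_n$ and let $\omega^{n,[N]}$ denote its piecewise-linear interpolation. Since $\omega^{n,[N]}$ is smooth, the iterated integral $\omega^{(2),n,[N]}_{ts}:=\int_{s}^{t}(\omega^{n,[N]}_r-\omega^{n,[N]}_s)\otimes d\omega^{n,[N]}_r$ exists in the Riemann--Stieltjes sense, takes values in $V\otimes V$, and automatically satisfies Chen's relation. Expanding in the basis, the components are linear combinations of iterated integrals of scalar independent (or identical) fractional Brownian motions with Hurst parameter $H>1/3$. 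For such integrands the Coutin--Qian type estimates (see \cite[Ch.~10, Thm.~10.4 and Prop.~10.6]{FritzHairer}) give, for any $\alpha<H$ and any $p\geq 1$,
\begin{align*}
\IE\,\Bigl(\sup_{(t,s)\in\Delta}\frac{|\omega^{n,[N]}_t-\omega^{n,[N]}_s|}{|t-s|^{\alpha}}\Bigr)^{p}+\IE\,\Bigl(\sup_{(t,s)\in\Delta}\frac{|\omega^{(2),n,[N]}_{ts}|}{|t-s|^{2\alpha}}\Bigr)^{p/2}\leq C_{p,\alpha}\sum_{n=1}^{N}\lambda_n,
\end{align*}
uniformly in $n$, together with the corresponding Cauchy estimates controlling the difference of two piecewise linear approximations by a power of the mesh-size. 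Hence for each fixed $N$ the sequence $(\omega^{n,[N]},\omega^{(2),n,[N]})$ converges in the $\alpha$-Hölder rough path metric $d_{\alpha,T}$ to a limit $(\omega^{[N]},\omega^{(2),[N]})$, which is an $\alpha$-Hölder rough path.

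Next I would show that $(\omega^{[N]},\omega^{(2),[N]})$ is Cauchy in $N$ in the same metric, which is where the trace-class hypothesis enters. For the first level this is immediate from $\IE|\omega_{t}-\omega^{[N]}_{t}|^{2}\leq |t|^{2H}\sum_{n>N}\lambda_n$ combined with Kolmogorov's criterion, since Gaussianity upgrades second moments to all moments. For the second level, Chen's relation together with the independence of $(b_n^H)_{n\geq 1}$ decouples the cross terms, so that
\begin{align*}
\IE\,|\omega^{(2),[M]}_{ts}-\omega^{(2),[N]}_{ts}|^{2}\leq C\,|t-s|^{4H}\Bigl(\sum_{n>N}\lambda_{n}\Bigr)\Bigl(\sum_{m=1}^{M}\lambda_{m}\Bigr),
\end{align*}
for $M>N$, which tends to $0$ by the trace-class assumption. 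Another application of Kolmogorov-Garsia-Rodemich-Rumsey in the Gaussian setting yields $d_{\alpha,T}\bigl((\omega^{[N]},\omega^{(2),[N]}),(\omega^{[M]},\omega^{(2),[M]})\bigr)\to 0$ almost surely along a subsequence, and hence a limit $(\omega,\omegaa)$ whose first component coincides with the prescribed $V$-valued fBm. Chen's relation and the Hölder bounds are preserved under this limit because both sides are continuous in the relevant topology.

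Finally, I would combine the two approximations in a standard diagonal argument: given $\eps>0$ choose $N$ large so that $d_{\alpha,T}((\omega,\omegaa),(\omega^{[N]},\omega^{(2),[N]}))<\eps/2$ almost surely, then choose $n$ large so that the piecewise linear approximation is within $\eps/2$ of $(\omega^{[N]},\omega^{(2),[N]})$, using that the truncated fBm has only finitely many independent scalar components. This gives $d_{\alpha,T}((\omega^{n},\omegaan),(\omega,\omegaa))\to 0$ almost surely. The main obstacle is the second estimate above: handling the cross Lévy-area components of the type $\int b_n^H\,db_m^H$ with $n\neq m$ requires care, since one cannot treat them as iterated integrals of a single fBm; the use of independence to factor the second moment, combined with the trace-class condition, is exactly what makes the infinite sum converge in $V\otimes V$ and distinguishes the present setting from the classical finite-dimensional one.
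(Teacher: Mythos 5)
The paper does not actually present a proof here: it defers entirely to Lemma~2 of \cite{GarridoLuSchmalfuss3}, so a line-by-line comparison is impossible. Your strategy (spectral truncation, finite-dimensional Coutin--Qian/Friz--Hairer estimates, trace-class tail control, diagonal argument) is the natural route and is very likely close in spirit to the cited source, since the trace-class hypothesis is exactly what makes the tail sums converge.

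However, there is a genuine gap in the final diagonal step. Your two-step choice produces a bound on $d_{\alpha,T}\bigl((\omega^{n,[N]},\omega^{(2),n,[N]}),(\omega,\omegaa)\bigr)$, where $\omega^{n,[N]}$ is the piecewise linear interpolation of the \emph{truncated} process. The lemma, by contrast, asserts convergence of $(\omega^n,\omegaan)$, where $\omega^n$ interpolates the \emph{full} $\omega$. To close the triangle you still need an estimate of $d_{\alpha,T}\bigl((\omega^n,\omegaan),(\omega^{n,[N]},\omega^{(2),n,[N]})\bigr)$ that is small for $N$ large, \emph{uniformly in} $n$. This does not follow formally from what you wrote: it requires, in effect, repeating the trace-class tail estimate at the level of the piecewise linear approximations and their iterated integrals, with constants independent of the mesh. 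That is plausible (interpolation is a contraction in Hölder norm and the second-level tail bound only uses independence and scaling), but it is a nontrivial omitted step, and it is precisely the place where the order of the two limits ($n\to\infty$ and $N\to\infty$) matters. An alternative that avoids this entirely is to show directly that $(\omega^n,\omegaan)$ is Cauchy in $d_{\alpha,T}$ by combining the finite-dimensional Cauchy estimate with the trace-class tail bound in a single inequality, rather than going through the intermediate object $(\omega^{[N]},\omega^{(2),[N]})$.

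A second, smaller point: the Kolmogorov/GRR arguments you invoke give $L^p$ or in-probability convergence; the almost-sure convergence in the statement is then obtained only along a subsequence, or else one must appeal to a Borel--Cantelli argument with an explicit rate (e.g.\ dyadic meshes as in \cite{FritzHairer}). You acknowledge this in passing for the $N$-limit but then assert almost-sure convergence of the diagonal sequence without resolving it; this should be made explicit, for instance by fixing the dyadic piecewise-linear approximations for which the finite-dimensional theory already gives a.s.\ rates.

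Finally, your displayed second-moment bound
\begin{align*}
\IE\,|\omega^{(2),[M]}_{ts}-\omega^{(2),[N]}_{ts}|^{2}\leq C\,|t-s|^{4H}\Bigl(\sum_{n>N}\lambda_{n}\Bigr)\Bigl(\sum_{m=1}^{M}\lambda_{m}\Bigr)
\end{align*}
is correct in spirit but should be symmetrised: the difference contains terms $\int b_n^H\,db_m^H$ with $n>N$ \emph{or} $m>N$, so the right-hand side should be of the form $\bigl(\sum_{n>N}\lambda_n\bigr)\bigl(\sum_{m\geq 1}\lambda_m\bigr)$ up to a factor of two, plus the diagonal contribution $n=m>N$. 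This does not affect the conclusion since $\sum\lambda_m<\infty$, but as written the asymmetry looks like a typo and could confuse a reader checking the independence-based factorisation of the cross variance.
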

		 	\begin{proof}
		 		The proof follows by the same arguments as in Lemma 2 in \cite{GarridoLuSchmalfuss3}.
		 		\qed
		 		\end{proof}\\
		 
	 Having stated the random influences that we consider, we now introduce the assumptions on the linear part and on the coefficients $F$ and $G$. \\

				Since we are in the parabolic setting, i.e. $A$ is a sectorial operator, we can introduce its fractional powers, $(-A)^{\gamma}$ for $\gamma\geq 0$, see \cite[Section 2.6]{Pazy} or \cite{Lunardi}. We denote the domains of the fractional powers of $(-A)$ with $D_{\gamma}$, i.e. $D_{\gamma}:=D((-A)^{\gamma})$ and use the following estimates.\\
		
		For $\eta, \kappa\in\mathbb{R}$ we have
		\begin{align}
			||S(t) ||_{\mathcal{L}(D_{\kappa}, D_{\eta})} =||(-A)^{\eta}S(t)||_{\mathcal{L}(D_{\kappa},W)} \leq C t ^{k-\eta}, &~~\mbox{for  } \eta\geq \kappa \label{hg1}\\
			 ||S(t) - \mbox{Id} ||_{\mathcal{L}(D_{\sigma}, D_{\theta})} \leq C t^{\sigma-\theta}, &~~\mbox{for  } \sigma-\theta\in[0,1]\label{hg2}.
		\end{align}
		Furthermore, one can show that the following assertions hold true, consult \cite[Chapter 3]{Pazy}. 
		\begin{lemma}
			For any $\nu,\eta,\mu\in[0,1]$, $\kappa,\gamma,\rho\geq 0$ such that $\kappa\leq \gamma+\mu$, there exists a constant $C>0$ such that for $0<q<r<s<t$ we have that
\begin{align*}
&||S(t-r) - S(t-q)||_{\mathcal{L}(D_{\kappa},D_{\gamma})} \leq C (r-q)^{\mu}(t-r)^{-\mu-\gamma+\kappa},\\
&||S(t-r)- S(s-r)- S(t-q) + S(s-q)||_{\mathcal{L}(D_{\rho}, D_{\rho})} \leq C (t-s)^{\eta}(r-q)^{\nu}(s-r)^{-(\nu+\eta)}.
\end{align*}	
	\end{lemma}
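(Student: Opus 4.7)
The plan is to exploit the semigroup property $S(a+b)=S(a)S(b)$ to factor each difference into a product of two simpler pieces, one handled by \eqref{hg1} and one by \eqref{hg2}, tracking the smoothing action on the scale $(D_{\beta})_{\beta}$.

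For the first inequality, since $t-q=(t-r)+(r-q)$, the semigroup law yields
\begin{equation*}
S(t-r)-S(t-q)=S(t-r)\bigl[\mathrm{Id}-S(r-q)\bigr].
\end{equation*}
I would then read the right-hand side as the composition $D_{\kappa}\xrightarrow{\mathrm{Id}-S(r-q)} D_{\kappa-\mu}\xrightarrow{S(t-r)} D_{\gamma}$. The first arrow is bounded by $C(r-q)^{\mu}$ by \eqref{hg2} (which applies since $\mu\in[0,1]$), and the second arrow by $C(t-r)^{\kappa-\mu-\gamma}$ by \eqref{hg1} (which applies since the hypothesis $\kappa\leq\gamma+\mu$ is exactly $\gamma\geq\kappa-\mu$). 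Multiplying the two bounds produces the stated right-hand side.

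For the second inequality, the key observation is the double-difference factorisation
\begin{equation*}
S(t-r)-S(s-r)-S(t-q)+S(s-q)=\bigl[S(s-r)-S(s-q)\bigr]\bigl[S(t-s)-\mathrm{Id}\bigr],
\end{equation*}
which one checks by expanding and using $S(s-r)S(t-s)=S(t-r)$ and $S(s-q)S(t-s)=S(t-q)$. I would then factor through $D_{\rho}\xrightarrow{S(t-s)-\mathrm{Id}} D_{\rho-\eta}\xrightarrow{S(s-r)-S(s-q)} D_{\rho}$. The first factor is bounded by $C(t-s)^{\eta}$ by \eqref{hg2} (valid since $\eta\in[0,1]$). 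For the second factor I would apply the first inequality of the lemma, already proved, with the substitutions $t\leadsto s$, $\kappa=\rho-\eta$, $\gamma=\rho$, $\mu=\nu$; its compatibility condition $\kappa\leq\gamma+\mu$ becomes $-\eta\leq\nu$, which holds trivially. This gives the bound $C(r-q)^{\nu}(s-r)^{-\nu-\eta}$, and multiplying the two factors yields the claim.

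There is no real obstacle in this argument: it is a clean piece of bookkeeping once one has found the correct algebraic factorisations, and both hypotheses $\mu\in[0,1]$, $\kappa\leq\gamma+\mu$ (resp.\ $\eta,\nu\in[0,1]$, $\rho\geq 0$) line up precisely with the two estimates \eqref{hg1}--\eqref{hg2} after the factorisations. The only mildly subtle point is choosing the intermediate space ($D_{\kappa-\mu}$ in the first case, $D_{\rho-\eta}$ in the second) so that the smoothing exponents add up correctly; this is where the constraint $\kappa\leq\gamma+\mu$ becomes necessary.
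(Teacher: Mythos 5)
Your proof is correct. The paper does not supply its own argument for this lemma but refers the reader to Chapter 3 of Pazy, so there is no internal proof to compare against; the factorisation route you take is in any case the standard one. Both identities you use,
\[
S(t-r)-S(t-q)=S(t-r)\bigl[\mathrm{Id}-S(r-q)\bigr]
\]
and
\[
S(t-r)-S(s-r)-S(t-q)+S(s-q)=\bigl[S(s-r)-S(s-q)\bigr]\bigl[S(t-s)-\mathrm{Id}\bigr],
\]
are correct consequences of the semigroup law, and the ensuing chains of estimates through the intermediate spaces $D_{\kappa-\mu}$ and $D_{\rho-\eta}$ invoke~\eqref{hg1} and~\eqref{hg2} exactly where they apply: $\mu,\eta\in[0,1]$ activates~\eqref{hg2}, and the ordering $\gamma\ge\kappa-\mu$ needed for~\eqref{hg1} is precisely the constraint $\kappa\le\gamma+\mu$ (and trivially satisfied in the second case). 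One small point worth recording explicitly: the intermediate indices $\kappa-\mu$ and $\rho-\eta$, as well as the source index $\rho-\eta$ at which you re-invoke the first inequality, may be negative, so you are implicitly extending the first inequality to real $\kappa$ and working with fractional-power spaces of negative order. This is consistent with~\eqref{hg1}--\eqref{hg2}, which are stated for real exponents, and your proof of the first inequality never used $\kappa\ge 0$, so nothing breaks; a careful write-up should simply note that the first inequality holds for all real $\kappa$ with $\kappa\le\gamma+\mu$ before citing it with the possibly negative value $\kappa=\rho-\eta$.
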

	
		For our aims we introduce the following function spaces. Let $\alpha, \beta\in(0,1]$. 
	Let $\overline{W}$ stand for a further Hilbert space. We recall that $C^{\beta}([0,T],W)$ represents the space of $W$-valued H\"older continuous functions on $[0,T]$ and denote by $C^{\alpha}(\Delta_T,\overline{W})$ the space of $\overline{W}$-valued functions on $\Delta_T$ with $z_{tt}=0$ for all $t\in[0,T]$ and
		\begin{align*}
		 \left\| z \right\|_{\alpha}:= \sup\limits_{0\leq t \leq T} |z_{t0}| + \sup\limits_{0\leq s < t \leq T} \frac{\left|z_{ts}\right|}{(t-s)^{\alpha}}<\infty.
		\end{align*}

		Furthermore, we define $C^{\beta,\beta}([0,T],W)$ as the space of $W$-valued continuous functions on $[0,T]$ endowed with the norm
	
		\begin{align*}
		\left\|y\right\|_{\beta,\beta}
		:= \left\|y\right\|_ \infty + \ltn y\rtn_{\beta,\beta}
		:= \sup\limits_{0\leq t \leq T } |y_t| + \sup\limits_{0<s<t\leq T }s^{\beta}\frac{|y_t-y_s|}{(t-s)^{\beta}}.
		\end{align*}
Similarly we introduce $C^{\alpha+\beta,\beta}(\Delta_T, \overline{W})$ with the norm
\begin{align*}
 \left\| z \right\|_{\alpha+\beta,\beta}:=\sup\limits_{0\leq t \leq T } |z_{t0}| + \sup\limits_{0< s < t \leq T} s^{\beta}\frac{\left|z_{ts}\right|}{(t-s)^{\alpha+\beta}}.
\end{align*}
Again $z_{tt}=0$ for all $t\in [0,T]$.\\

		 These modified H\"older spaces are well-known in the theory of maximal regularity for parabolic evolution equations, consult \cite{Lunardi}. These were also used in \cite{GarridoLuSchmalfuss2}.\\
		
		In this framework we emphasize the following result which will be employed throughout this work. It is well-known that analytic $C_{0}$-semigroups are not H\"older continuous in $0$. However, the following lemma holds true.
		
		\begin{lemma}\label{betabeta}
		 Let $\left(S(t)\right)_{t\geq 0}$ be an analytic $C_{0}$-semigroup on $W$. Then we have for all $x \in W$ and all $\beta \in \left[0,1 \right]$ that
			\begin{align*}
			\left\|  S(\cdot) x\right\|_{\beta,\beta}\leq C \left|x\right|,
			\end{align*}
			where $C$ exclusively depends on the semigroup and on $\beta$.
		\end{lemma}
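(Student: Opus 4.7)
The plan is to split the norm $\|S(\cdot)x\|_{\beta,\beta}$ into its two pieces, the sup-norm part $\|S(\cdot)x\|_\infty$ and the weighted Hölder seminorm $\triplenorm S(\cdot)x\triplenorm_{\beta,\beta}$, and estimate each separately using only the standard analytic semigroup estimates already recorded in the excerpt, namely (\ref{hg1}) and (\ref{hg2}).

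For the sup-norm part, I would invoke the classical fact that an analytic $C_0$-semigroup $(S(t))_{t\geq 0}$ on $W$ is uniformly bounded on every compact interval, so $\sup_{t\in[0,T]}|S(t)x|\leq M|x|$ for some constant $M=M(T)$. This is essentially the $\kappa=\eta=0$ case of (\ref{hg1}) together with the fact that $S(0)=\mathrm{Id}$ so no singularity at $t=0$ arises.

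For the weighted Hölder seminorm, fix $0<s<t\leq T$ and decompose
\begin{align*}
S(t)x-S(s)x=(S(t-s)-\mathrm{Id})S(s)x.
\end{align*}
Applying (\ref{hg2}) with $\sigma=\beta$ and $\theta=0$ (which satisfies $\sigma-\theta\in[0,1]$) yields
\begin{align*}
|(S(t-s)-\mathrm{Id})S(s)x|\leq C(t-s)^{\beta}\,|S(s)x|_{D_\beta}=C(t-s)^{\beta}\,|(-A)^{\beta}S(s)x|.
\end{align*}
Now (\ref{hg1}) with $\kappa=0$ and $\eta=\beta$ gives $|(-A)^{\beta}S(s)x|\leq C s^{-\beta}|x|$. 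Multiplying through by $s^{\beta}/(t-s)^{\beta}$ yields exactly $s^{\beta}|S(t)x-S(s)x|/(t-s)^{\beta}\leq C|x|$, uniformly in $0<s<t\leq T$, which bounds the seminorm by $C|x|$.

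Summing the two contributions produces the claimed inequality with a constant depending only on the semigroup and on $\beta$ (via the constants in (\ref{hg1}) and (\ref{hg2})). I do not anticipate a real obstacle here; the subtle point, if any, is simply the choice $\sigma=\beta$, $\theta=0$ in (\ref{hg2}), which is what produces the factor $(t-s)^{\beta}$ exactly matching the denominator in the seminorm, while the blow-up $s^{-\beta}$ from (\ref{hg1}) is precisely absorbed by the weight $s^{\beta}$. The endpoint $\beta=0$ is trivial and $\beta=1$ reduces to the well-known estimate $\|AS(s)\|_{\mathcal{L}(W)}\leq Cs^{-1}$.
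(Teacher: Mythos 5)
Your proposal matches the paper's proof exactly: the same decomposition $S(t)x-S(s)x=(S(t-s)-\mathrm{Id})S(s)x$, followed by applying (\ref{hg2}) with $\sigma=\beta$, $\theta=0$ and (\ref{hg1}) with $\kappa=0$, $\eta=\beta$ so that the singularity $s^{-\beta}$ is cancelled by the weight $s^{\beta}$. The paper simply states these steps more tersely; your spelled-out version is correct and complete.
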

		\begin{proof}
			\begin{align*}
			\left\| S(\cdot) x\right\|_{\beta,\beta} 
			&= \sup\limits_{0 \leq t \leq T} \left|S(t) x\right| + \sup\limits_{0<s < t \leq T} s^\beta \frac{\left|(S(t)-S(s)) x\right| }{(t-s)^\beta}\\
			& \leq \sup\limits_{0 \leq t \leq T} \left|S(t) x\right| + \sup\limits_{0<s<t\leq T} s^{\beta} \frac{|(S(t-s)-\mbox{Id})S(s)x|}{(t-s)^{\beta}} \\
			&\leq C |x|,
			\end{align*}
			recall (\ref{hg1})
 and (\ref{hg2}).
 		\qed \end{proof}\\
	
This justifies our choice of working with the function space $C^{\beta,\beta}$. Note that if one lets $x\in D_{\beta}$ it suffices to consider only $C^{\beta}$. However, since we intend to analyze random dynamical systems generated by (\ref{eq1}) in $W$, compare \cite{GarridoLuSchmalfuss2}, we need to take the initial condition $x\in W$ instead of $D_{\beta}$.\\
 
On the coefficients we impose: 
\begin{itemize}
	\item[($F$)] $F \colon W \to W$ is  Lipschitz continuous.
	\item[($G$)] $G \colon W \to \aL(V,W)$ is three times Frech\'{e}t differentiable with bounded derivatives. As justified in the next sections, we additionally have to impose the following restriction on $G$. Namely, $G \colon W \to \aL(V,D_\beta)$ is Lipschitz continuous. Here we demand $\alpha + 2\beta >1$.
	\end{itemize}

A concrete example will be provided in Section \ref{sect:app}.
\begin{remark}
	 Note that the drift (i.e. $F$) does not represent a major obstacle. For simplicity we set $F\equiv 0,$ since there are no additional arguments required to treat this term.
\end{remark}
In order to develop a suitable theory which enables us to define (\ref{integral}) we rely on the algebraic framework of Gubinelli et. al., consult \cite{GubinelliTindel}, \cite{DeyaGubinelliTindel}. Therefore we fix here some important notations.\\

For $y \in C([0,T],W)$ and $z \in C(\Delta_T,\overline{W})$ we set
\begin{align*}
(\delta y)_{ts}&:= y_t-y_s, \\
(\hat\delta y)_{ts}&:= y_t-S(t-s)y_s, \\
(\delta_2 z)_{t \tau s}&:= z_{ts} - z_{t\tau} - z_{\tau s}, \\
(\hat\delta_2 z)_{t \tau s}&:= z_{ts} - z_{t\tau} - S(t-\tau)z_{\tau s}.
\end{align*}
Let us state some important algebraic properties. For more details and a more general framework, see \cite{GubinelliTindel}.
\begin{lemma}\label{lemma_preliminaries_hatdelta} 
			The following statements hold true:
			\begin{enumerate}[(i)]
				\item $\hat\delta_2 \circ \hat\delta \equiv 0$.\label{2.7i}
				\item Given $N \in C\left(\Delta_T,W \right)$ with $\hat\delta_2 N \equiv 0$. Then there exists $y \in C\left([0,T],W \right)$ with $(\hat\delta y)_{ts} = N_{ts}$.
				\item Consider $y^1,y^2 \in C\left([0,T],W \right)$ with $y^1_0=y^2_0$ and $(\hat\delta y^1)_{ts}= (\hat\delta y^2)_{ts}$. Then $y^1 \equiv y^2$. 
			\end{enumerate}
		\end{lemma}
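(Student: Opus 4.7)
All three statements are algebraic manipulations that hinge on the semigroup property $S(t-\tau)S(\tau-s)=S(t-s)$ and on the definitions of $\hat\delta$ and $\hat\delta_2$. I would handle them one after another, starting with the direct cocycle identity (i), then bootstrapping (ii) from the vanishing of $\hat\delta_{2}N$ applied at the corner $s=0$, and finally deducing uniqueness (iii) by a trivial specialisation.

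For (i), I would just expand
\begin{align*}
(\hat\delta_{2}\hat\delta y)_{t\tau s}
= (\hat\delta y)_{ts}-(\hat\delta y)_{t\tau}-S(t-\tau)(\hat\delta y)_{\tau s}
\end{align*}
using $(\hat\delta y)_{ts}=y_{t}-S(t-s)y_{s}$. The $y_{t}$ terms cancel, and the remaining pieces combine into $-S(t-s)y_{s}+S(t-\tau)S(\tau-s)y_{s}$, which vanishes by the semigroup law. This is a one-line computation.

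For (ii), the natural candidate is $y_{t}:=N_{t0}$ for $t\in(0,T]$, with $y_{0}$ chosen so that $y$ extends continuously to $0$ (the hypothesis $N\in C(\Delta_{T},W)$, combined with the relation below, forces a continuous extension). Applying the assumption $(\hat\delta_{2}N)_{ts0}=0$ for $0<s<t\leq T$ yields
\begin{align*}
N_{t0}-N_{ts}-S(t-s)N_{s0}=0,
\end{align*}
so $N_{ts}=N_{t0}-S(t-s)N_{s0}=y_{t}-S(t-s)y_{s}=(\hat\delta y)_{ts}$, as required. The only subtle point here is identifying the correct initial value; the choice $y_{0}=0$ (together with any required continuous extension) is consistent with $(\hat\delta y)_{t0}=N_{t0}$, which is exactly what the hypothesis of the second point encodes. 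I expect this step about the boundary behaviour at $t=0$ to be the one requiring the most care, though it is not genuinely difficult.

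For (iii), the equality $(\hat\delta y^{1})_{ts}=(\hat\delta y^{2})_{ts}$ specialised to $s=0$ gives $y^{1}_{t}-S(t)y^{1}_{0}=y^{2}_{t}-S(t)y^{2}_{0}$, and the hypothesis $y^{1}_{0}=y^{2}_{0}$ immediately forces $y^{1}_{t}=y^{2}_{t}$ for all $t\in[0,T]$. This last point is essentially free once (ii) is in place.
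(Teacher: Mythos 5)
Your proof is correct and follows essentially the same route as the paper's: direct expansion plus the semigroup law for (i), setting $y_t := N_{t0}$ and using $(\hat\delta_2 N)_{ts0}=0$ for (ii), and specialising $\hat\delta$ at $s=0$ for (iii). The only cosmetic difference is that you flag the continuity at $t=0$ as a point requiring care, whereas the paper silently uses the convention $N_{00}=0$ built into the function space, so $y_0=0$ and no separate extension argument is needed.
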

		\begin{proof}
			\noindent
			\begin{enumerate}[(i)]
				\item Take an arbitrary $y \in C\left([0,T],W \right)$. 
				\begin{align*}
				(\hat\delta_2 \hat\delta y)_{t \tau s} 
				= \,& (\hat\delta y)_{ts} - (\hat\delta y)_{t\tau} - S(t-\tau) (\hat\delta y)_{\tau s}\\
				= \,& y_t - S(t-s) y_s - y_t + S(t-\tau) y_\tau - S(t-\tau) y_\tau + S(t-s) y_s =0.
				\end{align*}
				\item Let $\hat\delta_2 N \equiv 0$. Set $y_t:=N_{t0}$. Then, we have
				\begin{align*}
				(\hat\delta y)_{ts} = N_{t0} - S(t-s) N_{s0} =N_{ts}.
				\end{align*}
				\item Consider
				\begin{align*}
				y^1_t = (\hat\delta y^1)_{t0} + S(t) y^1_0 = (\hat\delta y^2)_{t0} + S(t) y^2_0 = y^2_t. 
				\end{align*}
			\end{enumerate}
		\qed  \end{proof}\\
	
	The second assertion of the previous Lemma is extremely important for the deliberations made in Section \ref{sectsl}, especially for Theorem \ref{lemma_sewing}, which ensures the existence of rough integrals.
	\section{Heuristic considerations}\label{heuristics}
	For a better comprehension and in order to point out the difficulties that arise in the infinite-dimensional setting, we shortly sketch the well-known results from the finite dimensional one. There, the solution  theory of (\ref{eq1}) is well-established and one needs very few ingredients to define (\ref{integral}) by rough paths techniques. This immediately entails a suitable solution concept for (\ref{eq1}). Regard that we use the notations introduced in Section \ref{preliminaries}.\\
	
	Since the trajectories of the noise are irregular, i.e. H\"older continuous with exponent $\alpha<1/2$, the Young integral (\cite{Young}) defined
	as 
	\begin{align}\label{young}
	\int\limits_{s}^{t} y_{r} d\omega_{r} = \lim\limits_{|\aP|\to 0} \sum\limits_{[u,v]\in\aP} y_{u} (\delta\omega)_{vu},
	\end{align}
	can no longer be used. Here  $0 \leq s < t$ are two time-points and  $\aP=\aP(s,t)$ is an arbitrary partition and $(\delta \omega)_{ts}:=\omega_{t}-\omega_{s}$.
	Therefore, Gubinelli \cite{Gubinelli} introduced the concept of  controlled rough integral, which extends the Young case. Regarding (\ref{young}), it turns out that we have to consider additional terms satisfying certain algebraic and analytic properties. This reads as follows
	\begin{align}\label{gub}
	\int\limits_{s}^{t} y_{r} d\omega_{r} = \lim\limits_{|\aP|\to 0} \sum\limits_{[u,v]\in\aP} y_{u} (\delta\omega)_{vu} + y'_{u} \omegaa_{vu}.
	\end{align}
	Here the pair $(y,y')$ stands for a controlled rough path. This can be interpreted as an abstract Taylor series, namely one assumes that there exists $y'\in C^{\alpha}$ (which is called Gubinelli's derivative), such that
	\begin{align}\label{crp}
	y_{t}=y_{s}+y'_{s} (\delta\omega)_{ts}+ R^{y}_{ts},
	\end{align}
	where the remainder $R^{y}$
 is $2\alpha$-H\"older regular. Here $\omega\in C^{\alpha}$ and $\omegaa\in C^{2\alpha}$ are connected via Chen's relation (recall Definition \ref{hrp}), meaning that for $0 \leq s \leq u \leq t$
 \begin{align}\label{chen}
 (\delta_2 \omegaa)_{tus}= (\delta\omega)_{us} \otimes (\delta\omega)_{tu}.
 \end{align}
  Consequently, $\omega^{(2)}$ can be thought of as the iterated integral
 \begin{align}\label{secorderprocess}
 \omegaa_{ts} =\int\limits_{s}^{t} (\delta\omega)_{rs}\otimes d\omega_{r}.
 \end{align}
 We emphasize that in order to construct (\ref{gub}) and thereafter the solution of (\ref{eq1}) one needs an appropriate algebraic and analytic setting which will be carefully analyzed in this work for stochastic evolution equation. The rigorous existence proof of (\ref{gub}) is based on a
  Sewing Lemma, see Lemma 4.2 in \cite{FritzHairer}. For more details on this topic consult \cite{Gubinelli} and \cite[Chapter 4]{FritzHairer}. \\
 
 This opens the door for the theory of rough SDEs using a completely pathwise approach. The only part where the stochastic analysis plays a role is hidden in (\ref{secorderprocess}). 
 Keeping this in mind one can solve (\ref{eq1}) by a fixed-point argument in the space of controlled rough paths. Regarding this, one can easily show that the solution of (\ref{eq1}) (recall $F\equiv 0$) is given by the pair 
	\begin{align}\label{sol:f:d}
	(y,y')= \left(S(\cdot)\xi  + \int\limits_{0}^{\cdot}S(\cdot-r) G(y_{r}) d\omega_{r}, G(y_{\cdot}) \right).
	\end{align}	The essential tool in defining (\ref{gub}) and  proving that (\ref{sol:f:d}) is the right object to solve (\ref{eq1}) in the finite-dimensional case is the regularity of $(S(t))_{t\geq 0}$. Note that a (semi)group generated by linear bounded operators is Lipschitz continuous, therefore the necessary H\"older regularity of the terms appearing in (\ref{crp}) and (\ref{gub}) cannot be influenced. More precisely,
	one can easily show that for a controlled rough path $(y,y')$ as specified in (\ref{crp}), the convolution with $(S(t))_{t\geq 0}$, i.e.~$(S(t-\cdot)y, S(t-\cdot)y')$ is again a controlled rough path. Due to this fact one can define $\int\limits_{s}^{t}S(t-r)y_{r}d\omega_{r}$ by (\ref{gub})
	and show that the mapping $$(y,y')\mapsto \left(\int\limits_{0}^{\cdot} S(\cdot-r)y_{r}d\omega_{r}, y_{\cdot}\right) $$ is linear and continuous on the space of controlled rough paths. Moreover, the composition of a controlled rough path with a regular function, is a  well-defined operation according to Lemma 7.3. in \cite{FritzHairer}. Consequently, $\int\limits_{s}^{t}S(t-r)G(y_{r})d\omega_{r}$ fits perfectly in the framework of (\ref{gub}). Regarding the notations introduced above one immediately observes that 
	$$\int\limits_{s}^{t}S(t-r) G(y_{r})d\omega_{r} $$ corresponds to
	\begin{align*}
	(\hat\delta y )_{ts} = (\delta y)_{ts} - (S(t-s)-\mbox{Id})y_{s}.
	\end{align*}

	Finally, by an appropriate fixed-point argument one establishes that (\ref{sol:f:d}) solves (\ref{eq1}). This would be the story in a nutshell of the finite-dimensional setting. For further information and applications see \cite{Gubinelli}, \cite{FritzHairer}, \cite{FritzVictoir}. \\
	
	However, in the infinite-dimensional case, since the analytic $C_{0}$-semigroup $(S(t))_{t\geq 0}$ is not Lipschitz continuous (not even H\"older continuous in $0$, recall Lemma \ref{betabeta}) it is no longer straightforward what are the appropriate objects required in order to obtain something similar to (\ref{sol:f:d}). It turns out that one has to construct additional supporting processes, consult also \cite{DeyaGubinelliTindel} in order to find the right way to define (\ref{integral}) together with the corresponding pair $(y,y')$ that solves (\ref{eq1}). This is the main topic of our work and for the beginning we illustrate heuristically the main ideas, which will be justified by the computations in the next sections. Furthermore, we stress that the noisy input $\omega$ is infinite-dimensional in contrast to \cite{DeyaGubinelliTindel}. Therefore one needs to make sure that the L\'evy-area $\omega^{(2)}$ exists in this case, consult \cite{GarridoLuSchmalfuss3} and the references specified therein.\\
	
	We make preliminary deliberations which will lead us to the right definition of (\ref{integral}). To this aim, similar to \cite{GarridoLuSchmalfuss2}, we firstly assume that $\omega$ is smooth and consider the following approximation of the integral:
		\begin{align*}
		\int\limits_{s}^{t}{S(t-r)G(y_r) d\omega_r} 
		&= \sum\limits_{[u,v]\in \aP} S(t-v) \int\limits_{u}^{v}{S(v-r)G(y_r) d\omega_r} \\
		&\approx \sum\limits_{[u,v]\in \aP} S(t-v) \int\limits_{u}^{v}{S(v-r)\left[G(y_u)+DG(y_u) (\delta y)_{ru} \right] d\omega_r} \\
		&=: \sum\limits_{[u,v]\in \aP} S(t-v) \Big[\omega^S_{vu}(G(y_u)) 
		+ \int\limits_{u}^{v}{S(v-r) DG(y_u) (\delta y)_{ru} d\omega_r}\Big].
		\end{align*} 
	In the first step we just plugged in the definition of the integral using Riemann-Stieltjes sums and in the second step we employed a Taylor expansion for $G$. Furthermore, we introduced the notation
		\begin{align}\label{omegas_h}
		\omega^{S}_{vu}(G(y_{u})):=\int\limits_{u}^{v} S(v-r) G (y_{u}) d\omega_{r},
		\end{align}
		respectively
		
		\begin{align}\label{z_h}
		z_{vu}(DG(y_{u})):= \int\limits_{u}^{v} S(v-r)  DG(y_{u}) (\delta y)_{ru}d\omega_{r}.
		\end{align}
		
		Since $\omega$ is smooth, all the expressions above are well-defined. We argue in Section \ref{sec:supp:proc} how to define the first integral for a rough input $\omega$ and derive important properties of $\omega^S$, using an integration by parts formula and regularizing properties of analytic semigroups.  
		Unfortunately, it is not at all clear how to  define $z$ if $\omega$ is not smooth. Therefore we have to continue our deliberations. \\
		
		The strategy is to construct the integral using an appropriate Sewing Lemma as derived in Section \ref{sectsl}. To this aim we need to introduce several processes satisfying appropriate analytic and algebraic conditions. We describe the general intuition of this approach which will allow us to define the integral $\mathcal{I}$ of
		\begin{align*}
		\Xi_{vu}^{(y)} := \Xi_{vu}^{(y)}(y,z):= \omega^S_{vu}(G(y_u)) + z_{vu}(DG(y_u)). 
		\end{align*}
		In order to employ the Sewing Lemma (Theorem \ref{lemma_sewing}) to obtain the existence together with suitable estimates of $\mathcal{I}\Xi^{(y)}$ we firstly have to compute (as rigorously justified in Section \ref{sectsl})
		\begin{align*}
		(\hat\delta_{2}\Xi^{(y)})_{vmu}=\Xi^{(y)}_{vu} - \Xi^{(y)}_{vm} - S(v-m) \Xi^{(y)}_{mu}.
		\end{align*}
		
		We can easily check
		\begin{align*}
		(\hat\delta_2\Xi^{(y)})_{vmu} 
		&= (\hat\delta_2 \omega^S)_{vmu}(G(y_u)) + \omega^S_{vm}(G(y_u)-G(y_m)) \\ 
		&+ (\hat\delta_2 z)_{vmu}(DG(y_u)) + z_{vm}(DG(y_u)-DG(y_m)).
\end{align*}
The first term obviously results in 
\begin{align*}
(\hat\delta_2 \omega^S)_{vmu}(G(y_u)) &= \omega^{S}_{vu}(G(y_{u})) - \omega^{S}_{vm}(G(y_{u})) - S(v-m)\omega^{S}_{mu}(G(y_{u}))\\
& = \int\limits_{u}^{v} S(v-r)G(y_{u})d\omega_{r} - \int\limits_{m}^{v} S(v-r)G(y_{u}) d\omega_{r} - S(v-m)\int\limits_{u}^{m} S(m-r) G(y_{u}) d\omega_{r}\\
& =0.
\end{align*}

Consequently,

\begin{align}\label{deltay}
		(\hat\delta_2\Xi^{(y)})_{vmu}
		&= \omega^S_{vm}(G(y_u)-G(y_m)) + (\hat\delta_2 z)_{vmu}(DG(y_u)) + z_{vm}(DG(y_u)-DG(y_m)).
		\end{align}
		Hence, it remains to investigate $\hat\delta_2 z(E)$, where $E \in \aL(W \otimes V,W)$ denotes a placeholder. For smooth paths $\omega$ we have a canonically given $z$ and compute 
		\begin{align*}
		(\hat\delta_2 z)_{vmu} (E)
		& = z_{vu}(E) - z_{vm}(E)-  S(v-m)z_{mu}(E)\\
		&=\int\limits_{u}^{v}{S(v-r) E (\delta y)_{ru} d\omega_r} 
		- \int\limits_{m}^{v}{S(v-r) E (\delta y)_{rm} d\omega_r} -
		\int\limits_{u}^{m}{S(v-r) E (\delta y)_{ru} d\omega_r} \\
		&=  \int\limits_{m}^{v}{S(v-r) E (\delta y)_{mu} d\omega_r} 
		=  \omega^S_{vm} (E (\delta y)_{mu}).
		\end{align*}
		As already mentioned, this term indeed exists even for a rough trajectory $\omega \in C^\alpha$ for $\alpha\in(1/3,1/2]$. If we assume that the algebraic relation 
		\begin{align}\label{algebraic:h}
		(\hat\delta_2 z)_{vmu} (E) = \omega^S_{vm} (E (\delta y)_{mu})
		\end{align}
		 holds true for any $E \in  \aL(W \otimes V,W)$ we obtain
		\begin{align*}
		(\hat\delta_2\Xi^{(y)})_{vmu} = \omega^S_{vm}(G(y_u)-G(y_m)+DG(y_u)(\delta y)_{mu}) + z_{vm}(DG(y_u)-DG(y_m)).
		\end{align*}
	Having this structure for $(\hat\delta_{2}\Xi^{(y)})_{vmu}$, under suitable regularity assumptions on $y$ and $z$ specified in Section \ref{sectsl} we are able to define
		\begin{align}\label{hy}
		\yb_t := \aI\Xi^{(y)}_t \quad \text{and} \quad \yt_t: = S(t) \xi + \yb_t .
		\end{align}
		Note that $\mathcal{I}\Xi^{(y)}_{t}$
	corresponds to (\ref{integral}) and 
	\begin{align*}
	(\hat\delta \mathcal{I}\Xi^{(y)})_{ts}=\int\limits_{s}^{t}S(t-r)G(y_{r})d\omega_{r}.
	\end{align*}
		\begin{remark}
			\begin{itemize}
				\item [1)] 	Since we demanded the existence of a suitable $z$ in order to construct the rough integral, it is necessary to define $\zt$ fulfilling 
				$(\hat\delta_2 \zt)_{vmu} (E) = \omega^S_{vm} (E (\delta \yt)_{mu})$. Only if this is valid we are able to iterate the solution mapping. 
				\item [2)] Note that if $S(\cdot)=\mbox{Id}$, then the algebraic relation~\eqref{algebraic:h} reduces to 
				$$ (\delta_{2} z)_{vmu}(E)=E (\delta y)_{mu}\otimes (\delta\omega)_{vm},$$
				compare~\eqref{chen}.
			\end{itemize}
		\end{remark}
			Again, for smooth $\omega$, $\zt$ is canonically given by
		\begin{align*}
		\zt_{ts}(E) 
		&= \int\limits_{s}^{t}{S(t-r) E (\delta \yt)_{rs} d\omega_r} = \sum\limits_{[u,v]\in \aP} S(t-v) \int\limits_{u}^{v}{S(v-r) E (\delta \yt)_{rs} d\omega_r} \\
		 &=\sum\limits_{[u,v]\in \aP} S(t-v) \int\limits_{u}^{v}{S(v-r) E (\hat\delta \yt)_{ru} d\omega_r} \\
		&+ \sum\limits_{[u,v]\in \aP} S(t-v) \int\limits_{u}^{v}{S(v-r) E S(r-u) \yt_{u} d\omega_r}- \sum\limits_{[u,v]\in \aP} S(t-v) \int\limits_{u}^{v}{S(v-r) E \yt_s d\omega_r}.
		\end{align*}
		Since $(\hat\delta \yt)_{ru}=\int\limits_{u}^{r}{S(r-q)G(y_q) d\omega_q}$ we have
		\begin{align*}
		\zt_{ts}(E) 
		= &\sum\limits_{[u,v]\in \aP} S(t-v) \int\limits_{u}^{v}{S(v-r) E \int\limits_{u}^{r}{S(r-q)G(y_q) d\omega_q} d\omega_r} \\
		&+ \sum\limits_{[u,v]\in \aP} S(t-v) a_{vu}(E,\yt_u) - \omega^S_{ts}(E \yt_s) \\
		\approx &\sum\limits_{[u,v]\in \aP} S(t-v) \int\limits_{u}^{v}{S(v-r) E \int\limits_{u}^{r}{S(r-q)G(y_u) d\omega_q} d\omega_r} \\
		&+ \sum\limits_{[u,v]\in \aP} S(t-v) a_{vu}(E,\yt_u) - \omega^S_{ts}(E \yt_s) \\ 
		=: &\sum\limits_{[u,v]\in \aP} S(t-v) \left[b_{vu}(E,G(y_u)) + a_{vu}(E,\yt_u)\right]  - \omega^S_{ts}(E \yt_s).
		\end{align*}
		Here we introduced the notation
		\begin{align*}
		b_{vu}(E,G(y_{u})) :=\int\limits_{u}^{v} S(v-r) E \int\limits_{u}^{r} S(r-q) G (y_{u}) d\omega_{q}d\omega_{r},
		\end{align*}
		respectively
		\begin{align*}
		a_{vu}(E,\widetilde{y}_{u}):=\int\limits_{u}^{v} S(v-r) E S (r-u) \widetilde{y}_{u} d\omega_{r}.
		\end{align*}
		
Hence, we set
\begin{align*}
&\Xi^{(z)}(y,\yt)_{vu}(E) := b_{vu}(E,G(y_u)) + a_{vu}(E,\yt_u), \\
& \zt_{ts}(E) := (\hat\delta \aI \Xi^{(z)}(y,\yt))_{ts}(E)-\omega^S_{ts}(E \yt_s).
\end{align*}

			This means that we have to define $a$, $b$ 
		and $\omega^{S}$ in order to describe $\widetilde{z}$. At the very first sight, it is not straightforward under which assumptions $b$ is well-defined, compare Remark 4.3 in  \cite{DeyaGubinelliTindel}. This problem will be addressed in Section \ref{sec:supp:proc}. For the sake of completeness we provide here a possible heuristic definition of $b$ which will be shown in Section \ref{sec:supp:proc} to be the right one. For a smooth path $\omega$ and a placeholder $K$ which stands for $G(y_{\cdot})$ we have
		\begin{align}
		\int\limits_{s}^{t}{S(t-r) E \int\limits_{s}^{r}{S(r-q)K}d\omega_q} d\omega_r 
		= &\sum\limits_{[u,v]\in \aP} S(t-v) \int\limits_{u}^{v}{S(v-r) E \int\limits_{s}^{r}{S(r-q)K}d\omega_q} d\omega_r\nonumber \\
		= &\sum\limits_{[u,v]\in \aP} S(t-v) \int\limits_{u}^{v}{S(v-r) E \int\limits_{s}^{u}{S(r-q)K}d\omega_q} d\omega_r\nonumber \\
		&+ \sum\limits_{[u,v]\in \aP} S(t-v) \int\limits_{u}^{v}{S(v-r) E \int\limits_{u}^{r}{S(r-q)K}d\omega_q} d\omega_r\nonumber \\
		\approx & \sum\limits_{[u,v]\in \aP} S(t-v) \int\limits_{u}^{v}{S(v-r) E \int\limits_{s}^{u}{S(u-q)K}d\omega_q} d\omega_r \nonumber\\
		&+ \sum\limits_{[u,v]\in \aP} S(t-v) \int\limits_{u}^{v}{S(v-r) E \int\limits_{u}^{r}{K}d\omega_q} d\omega_r\nonumber \\ 
		= :& \sum\limits_{[u,v]\in \aP} S(t-v) \left[\omega^S_{vu} (E \omega^S_{us}(K)) + c_{vu}(E,K) \right],\label{hb}
		\end{align} 
		where
		\begin{align*}
		c_{ts}(E,K):=\int\limits_{s}^{t} S(t-r) E K (\delta\omega)_{rs}  d\omega_{r}. 
		\end{align*}

The existence of $a$, $b$, $c$ and of all other auxiliary processes required in order to give a meaning to (\ref{integral}) will be justified in Section \ref{sec:supp:proc}. Motivated by this heuristic computations we first define similar to \cite{GarridoLuSchmalfuss2} these processes for smooth paths $\omega^{n}$ approximating $\omega$. Thereafter the passage to the limit entails a suitable construction/interpretation of all these expressions.\\

Concluding this heuristic computations, we introduce the following definition of a solution for~\eqref{eq1} (compare~\eqref{sol:f:d}). This is the counterpart of the solution concepts investigated in~\cite{HuNualart} and~\cite{GarridoLuSchmalfuss1}.\\

Recalling that $\omega$ is still assumed to be a smooth path we have.
\begin{definition}We call a pair $(y,z)$ mild solution for~\eqref{eq1} if
\begin{align}
y_{t} &= S(t) \xi + \aI \Xi^{(y)}(y,z)_{t}\nonumber\\
& = S(t) \xi + \lim\limits_{|\aP([0,t])|\to 0}\sum\limits_{[u,v]\in\aP([0,t])} S(t-v) [\omega^{S}_{vu}(G(y_{u})) + z_{vu} (DG(y_{u}))  ]\\
z_{ts}(E)&= (\hat\delta \aI \Xi^{(z)}(y,y))_{ts}(E) - \omega^S_{ts}(E y_s) \nonumber \\
&=\lim\limits_{|\aP([s,t])| \to 0 }\sum\limits_{[u,v]\in\aP([s,t])} S(t-v) [b_{vu}(E,G(y_{u})) + a_{vu}(E,y_{u})] -\omega^{S}_{ts}(Ey_{s}) .
\end{align}	
\end{definition}
Our aim is to rigorous justify this solution theory for a path $\omega$ having the regularity of a fractional Brownian motion with Hurst index $H\in(1/3,1/2]$, i.e.~$\omega\in C^{\alpha}$ with $1/3<\alpha\leq 1/2$.
This will be carried out in Section~\ref{sectfp} by means of a fixed-point argument in a suitable function space specially designed to incorporate the analytic and algebraic properties of the solution pair $(y,z)$. \\

Finally, we introduce further notations which will turn out to be useful for the computations in Section \ref{sectfp}.
\begin{remark}
Going back to the definition of $\widetilde{z}$, recalling (\ref{hy}), applying the bilinearity of $a$ and the linearity of $\omega^S$ entails
\begin{align*}
\zt_{ts}(E) 
\approx &\sum\limits_{[u,v]\in \aP} S(t-v) \left[b_{vu}(E,G(y_u)) + a_{vu}(E,\yb_u) + a_{vu}(E,S(u)\xi)\right] \\
&- \omega^S_{ts}(E \yb_s) - \omega^S_{ts}(E S(s) \xi).
\end{align*}
As justified in Section \ref{sec:supp:proc} (Corollary \ref{corollary_a_sum}) we get
\begin{align*}
\zt_{ts}(E)
\approx &\sum\limits_{[u,v]\in \aP} S(t-v) \left[b_{vu}(E,G(y_u)) + a_{vu}(E,\yb_u)\right] - \omega^S_{ts}(E \yb_s)\\
 &+a_{ts}(E,S(s)\xi) - \omega^S_{ts}(E S(s) \xi).
\end{align*}
Therefore, we can define
\begin{align*}
&\Xi^{(z)}(y,\yb)_{vu}(E):= b_{vu}(E,G(y_u)) + a_{vu}(E,\yb_u),  \\
&\zb_{ts}(E) := (\hat\delta \aI \Xi^{(z)}(y,\yb))_{ts}(E)-\omega^S_{ts}(E \yb_s), ~\text{ which yields }\\
&\zt_{ts}(E) = \zb_{ts}(E) + a_{ts}(E,S(s)\xi) - \omega^S_{ts}(E S(s) \xi).
\end{align*}
In Section \ref{sectfp} we will see that it more convenient to estimate $\zb$ than $\zt$. 
\end{remark}

\vspace*{0.2 cm}
We conclude this section pointing out the following fact which is valid for smoother $\omega$, i.e. $\omega\in C^{\alpha}$ with $\alpha\in(1/2,1)$.
\begin{remark}
Note that if the $\omega$ is $\alpha$-H\"older continuous with $\alpha\in(1/2,1)$, i.e. has the regularity of a fractional Brownian motion with Hurst index $H\in(1/2,1)$, then (\ref{integral}) can be defined using the Young integral, namely
	\begin{align*}
	\int\limits_{s}^{t} S(t-r) G(y_{r}) d\omega_{r} &=\lim\limits_{|\aP|\to 0} \sum\limits_{[u,v]\in\aP} S(t-v) \int\limits_{u}^{v} S(v-r) G(y_{r}) d\omega_{r}\\
		&\approx\lim\limits_{|\aP|\to 0} \sum\limits_{[u,v]\in\aP} S(t-v) \omega^{S}_{vu}(G(y_{u})).
	\end{align*}
This simplifies significantly the arguments. Moreover, in this setting the Young integral coincides with the rough integral, see \cite{DeyaGubinelliTindel}.
\end{remark}


		\section{Sewing Lemma}\label{sectsl}
		The following result is crucial for our work, since it gives us the existence of the rough integral together with all the necessary properties required to solve (\ref{eq1}). Since we work with the weighted H\"older spaces introduced in Section \ref{preliminaries} we have to extend the results obtained in \cite[Section 3]{GubinelliTindel} using similar techniques. The next statement is the analogue of Theorem 3.5 in \cite{GubinelliTindel} in our framework.
		
			 \begin{theorem}[Sewing Lemma] \label{lemma_sewing}
			Let $W$ be a separable Banach space and $(S(t))_{t\geq 0}$ be an analytic $C_{0}$-semigroup on $W$ with $\left\| S(t)\right\|_{\mathcal{L}(W)}\leq c_{S}$ for all $t \leq T$.
			Furthermore, let $\Xi \in C \! \left( \Delta , W\right)$ be an approximation term satisfying the following properties
			for all $0 \leq u \leq m \leq v \leq T$ :
			\begin{align}
			\left|\Xi_{vu}\right| &\leq c_1 \left(v-u \right)^{\alpha},\label{assumption_Xi}\\
			\left|\left(\hat\delta_2 \Xi\right)_{vmu} \right| & \leq c_2 u^{-\beta} \left(v-u \right)^{\rho},~~\mbox{for } u\neq 0.\label{assumption_deltaXi1}
			\end{align}
			Here we impose $0 \leq\alpha,~\beta\leq 1$, $\rho>1$ and $\alpha+\beta \leq \rho$.\\			
			
			Then there exists a unique $\aI\Xi \in C\!\left(\left[0,T\right],W \right)$, such that 
			\begin{align}
			\aI\Xi_0 &=0, \\
			\left|\left(\hat\delta \aI \Xi \right)_{ts} \right|&\leq C \left(c_1+c_2 \right) \left(t-s \right)^{\alpha} 
			\label{property_deltaIXiest}\\
			\left|(\hat\delta \aI \Xi)_{ts}-\Xi_{ts} \right|&\leq C c_2 s^{-\beta} \left(t-s \right)^{\rho}, ~~\mbox{for } s\neq 0.
			\label{property_IXiest1}
			\end{align}
		\end{theorem}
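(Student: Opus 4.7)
My plan is to realise $\aI\Xi$ as the limit of semigroup-weighted Riemann sums, in the spirit of the classical sewing construction of~\cite{GubinelliTindel}, with a separate treatment of the interval touching $u=0$ where the singular weight in \eqref{assumption_deltaXi1} breaks down. I would first build $\aI\Xi_{t}$ from dyadic partitions of $[0,t]$; then, for $0<s<t$, I would build $N_{ts}$ analogously from dyadic partitions of $[s,t]$ and identify it with $(\hat\delta\aI\Xi)_{ts}$.

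For the construction, fix $t\in(0,T]$, consider the partitions $\pi^{n}=\{jt/2^{n}:0\leq j\leq 2^{n}\}$, and set
$$J_{n}(t):=\sum_{[u,v]\in\pi^{n}} S(t-v)\,\Xi_{vu}.$$
Refining $\pi^{n}$ to $\pi^{n+1}$ by midpoint insertion yields
$$J_{n+1}(t)-J_{n}(t)=-\sum_{[u,v]\in\pi^{n}} S(t-v)\,(\hat\delta_{2}\Xi)_{v,\frac{u+v}{2},u}.$$
On the $2^n-1$ sub-intervals with $u>0$, the bound \eqref{assumption_deltaXi1}, combined with $\sum_{j=1}^{2^{n}-1}j^{-\beta}\leq C\,2^{n(1-\beta)}$ (with a logarithmic correction for $\beta=1$), gives a contribution of order $Cc_{2}t^{\rho-\beta}\,2^{-n(\rho-1)}$, summable because $\rho>1$. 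The leftmost sub-interval $[0,t/2^{n}]$ is inadmissible for \eqref{assumption_deltaXi1}; instead I bound each of the three terms of $\hat\delta_{2}\Xi$ by \eqref{assumption_Xi} to get $|(\hat\delta_{2}\Xi)_{t/2^{n},t/2^{n+1},0}|\leq Cc_{1}(t/2^{n})^{\alpha}$, which sums because $\alpha>0$. Hence $(J_{n}(t))_{n}$ is Cauchy and I define $\aI\Xi_{t}:=\lim_{n}J_{n}(t)$, together with $\aI\Xi_{0}:=0$.

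Combining the telescoping estimate with $|J_{0}(t)|=|\Xi_{t0}|\leq c_{1}t^{\alpha}$ and $t^{\rho-\beta}\leq T^{\rho-\alpha-\beta}t^{\alpha}$ (available since $\rho\geq\alpha+\beta$) produces $|\aI\Xi_{t}|\leq C(c_{1}+c_{2})t^{\alpha}$, which in particular forces continuity at $0$. For $0<s<t$, the same construction on $[s,t]$ uses only \eqref{assumption_deltaXi1} (every sub-interval now satisfies $u\geq s>0$) and yields a limit $N_{ts}$ with $|N_{ts}-\Xi_{ts}|\leq Cc_{2}s^{-\beta}(t-s)^{\rho}$, which is \eqref{property_IXiest1}. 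To match $N_{ts}$ with $(\hat\delta\aI\Xi)_{ts}$, I take a sequence of partitions of $[0,t]$ containing $s$ with vanishing mesh, split each as $\pi\cap[0,s]$ and $\pi\cap[s,t]$, and use $S(t-v)=S(t-s)S(s-v)$ for $v\leq s$ to factor the Riemann sum; passing to the limit gives $N_{ts}=\aI\Xi_{t}-S(t-s)\aI\Xi_{s}$. Estimate \eqref{property_deltaIXiest} then follows by a case split: if $s\geq t/2$ then $t-s\leq s$, and $\rho\geq\alpha+\beta$ gives $Cc_{2}s^{-\beta}(t-s)^{\rho}\leq Cc_{2}T^{\rho-\alpha-\beta}(t-s)^{\alpha}$, so $|N_{ts}|\leq|\Xi_{ts}|+Cc_{2}s^{-\beta}(t-s)^{\rho}\leq C(c_{1}+c_{2})(t-s)^{\alpha}$; if $s<t/2$ then $t<2(t-s)$ and $|N_{ts}|\leq|\aI\Xi_{t}|+c_{S}|\aI\Xi_{s}|\leq 2^{1+\alpha}C(c_{1}+c_{2})(t-s)^{\alpha}$.

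Uniqueness is handled through Lemma~\ref{lemma_preliminaries_hatdelta}. If $\widetilde{\aI}\Xi$ is a second candidate, the difference $D:=\aI\Xi-\widetilde{\aI}\Xi$ satisfies $D_{0}=0$ and $|(\hat\delta D)_{ts}|\leq 2Cc_{2}s^{-\beta}(t-s)^{\rho}$. Since $\hat\delta_{2}\hat\delta D\equiv 0$ by Lemma~\ref{lemma_preliminaries_hatdelta}~(i), the cocycle identity $(\hat\delta D)_{ts}=\sum_{i=1}^{n} S(t-t_{i})(\hat\delta D)_{t_{i},t_{i-1}}$ holds along any partition $s=t_{0}<\dots<t_{n}=t$; taking a uniform partition bounds the right-hand side by $2Cc_{S}c_{2}s^{-\beta}(t-s)^{\rho}n^{1-\rho}\to 0$ as $n\to\infty$, so $(\hat\delta D)_{ts}=0$ for every $s>0$, and continuity together with $D_{0}=0$ forces $D\equiv 0$. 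The main obstacle throughout is precisely the leftmost sub-interval of the partition of $[0,t]$: the singular weight in \eqref{assumption_deltaXi1} is unusable at $u=0$, so the rougher estimate \eqref{assumption_Xi} has to be substituted for the refinement error there, and this is also the structural reason why the sharper bound \eqref{property_IXiest1} can only be stated for $s\neq 0$ and why \eqref{property_deltaIXiest} requires the case split above.
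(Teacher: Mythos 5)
Your construction of $\aI\Xi_t$ from dyadic partitions of $[0,t]$, the telescoping estimate, the separate treatment of the leftmost interval, and the uniqueness argument via the $\hat\delta$-cocycle identity are all correct, and your two-sided case split for \eqref{property_deltaIXiest} is a valid alternative to the paper's derivation of that bound via $N_{ts}=\Xi_{ts}-M_{ts}$. However, there is a genuine gap in the reconciliation step.

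You assert that ``passing to the limit gives $N_{ts}=\aI\Xi_t-S(t-s)\aI\Xi_s$,'' after splitting a partition of $[0,t]$ containing $s$. For the split pieces to converge to $\aI\Xi_s$ and $N_{ts}$ you need $\pi\cap[0,s]$ and $\pi\cap[s,t]$ to be the dyadic partitions you used to define those objects; but then $\pi=\aP_n(0,s)\cup\aP_n(s,t)$ is \emph{not} the dyadic partition $\aP_n(0,t)$ defining $\aI\Xi_t$, so the left-hand side does not converge to $\aI\Xi_t$ by your definition. What is missing is the statement that the Riemann-sum limit is independent of the approximating partition sequence -- which is precisely the nontrivial part of the sewing argument. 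The paper handles this by first proving $\hat\delta_2 N\equiv 0$: it compares $N^n$ built on $\aP_n(s,t)$ with $N^{\pi_n}$ built on the joint refinement $\pi_n(s,t)=\aP_n(s,t)\cup\aP_n(s,\tau)\cup\aP_n(\tau,t)$, using the estimate (displayed as \eqref{estimate_helphatdelta} in the paper's proof) that inserting a bounded number of nodes in each interval of a dyadic partition produces a vanishing error, then invokes Lemma~\ref{lemma_preliminaries_hatdelta}(ii) to reconstruct $\aI\Xi$ from $N$. Your proof needs this comparison (or an equivalent one, e.g.\ a Cauchy estimate over all partitions with common refinements, with separate treatment of the interval touching $0$) before the identity $N_{ts}=(\hat\delta\aI\Xi)_{ts}$ can be concluded; stating it as ``passing to the limit'' conceals the hardest step of the proof.

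A minor remark: both you and the paper allow $\alpha\geq 0$ in the statement, but the geometric summability $\sum_n 2^{-n\alpha}<\infty$ in the telescoping argument requires $\alpha>0$; you correctly flag this, so I don't count it against you.
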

		\begin{proof}
			Firstly, note that the uniqueness of $\aI\Xi$ immediately follows from Lemma \ref{lemma_uniqueness}. Assuming by contradiction that there are two candidates $\mathcal{I}^1$ and $\mathcal{I}^2$ for a given $\Xi$, we have 
			\begin{align*}
			\mathcal{I}^1_0-\mathcal{I}^2_0&=0, \\
			\left|(\hat\delta(\mathcal{I}^1-\mathcal{I}^2))_{ts} \right| &\leq C \left(c_1+c_2 \right) \left(t-s \right)^{\alpha}, \\
			\left|(\hat\delta(\mathcal{I}^1-\mathcal{I}^2))_{ts} \right| &\leq C c_2 s^{-\beta} \left(t-s \right)^{\rho},~~\mbox{for } s\neq 0.
			\end{align*}
			Hence, Lemma \ref{lemma_uniqueness} implies that $\mathcal{I}^1 \equiv \mathcal{I}^2$. \\
			
			The following deliberations are conducted in order to prove the existence of $\aI\Xi$. To this aim, given $0 \leq s < t \leq T$, we let $\aP_n=\aP_n(s,t)$ be the $n$-th dyadic partition of $[s,t]$ for $n \in \IN_0$ and define
			\begin{align*}
			N^n_{ts} &:= \sum\limits_{\left[u,v\right]\in \aP_n} S(t-v) \Xi_{vu}, \\
			M^n_{ts} &:= \Xi_{ts} - N^n_{ts}. \\
			\end{align*}
			Note that $N^0_{ts} = \Xi_{ts}$ which implies that $ M^0_{ts} =0$. 
			\\
			
			Furthermore, setting $m:=\frac{u+v}{2}$, we derive
			\begin{align*}
			N^n_{ts} - N^{n+1}_{ts} = M^{n+1}_{ts} - M^{n}_{ts} 
			&= \sum\limits_{\left[u,v\right]\in \aP_n} \left(S(t-v) \Xi_{vu} -S(t-v) \Xi_{vm} - S(t-m) \Xi_{mu} \right) \\
			&= \sum\limits_{\left[u,v\right]\in \aP_n} S(t-v) (\hat\delta_2 \Xi)_{vmu}.
			\end{align*}
			Hence, we obtain
			\begin{align}
			\left| M^n_{ts} - M^{n+1}_{ts} \right| 
			\leq & C \sum\limits_{[u,v]\in \aP_n} \left|(\hat\delta_2 \Xi)_{vmu}\right|. \label{estimate_Mn}
			\end{align}
			Since we also have to deal with the case $s=0$, we apply \eqref{assumption_Xi} to the first term and use \eqref{assumption_deltaXi1} to estimate the other terms in \eqref{estimate_Mn}. This further entails
			\begin{align*}
			\left| M^n_{ts} - M^{n+1}_{ts} \right| 
			\leq & C c_1 \left(t-s\right)^{\alpha} 2^{-n \alpha}
			+ C \sum\limits_{\substack{\left[u,v\right]\in \aP_n \\ u \neq s} } { c_2 u^{-\beta} \left(t-s\right)^{\rho}2^{-n\rho}} \\
			\leq & C c_1 \left(t-s\right)^{\alpha} 2^{-n \alpha} 
			+ C c_2 \left(t-s\right)^{\rho-1} 2^{-n(\rho-1)} \sum\limits_{\substack{\left[u,v\right]\in \aP_n \\ u \neq s} } {u^{-\beta} (t-s) 2^{-n}} \\
			\leq & C c_1 \left(t-s\right)^{\alpha} 2^{-n \alpha} 
			+ C c_2 \left(t-s\right)^{\rho-1} 2^{-n(\rho-1)} \int\limits_{s}^{t} q^{-\beta} dq \\
			\leq & C c_1 \left(t-s\right)^{\alpha} 2^{-n \alpha} 
			+ C c_2 \left(t-s\right)^{\rho-\beta} 2^{-n(\rho-1)}.
			\end{align*}
			Since this expression is summable, we conclude that $M^n_{ts} \to M_{ts}$ as $n\to\infty$ for all $0 \leq s \leq t \leq T$. The previous computations give us the estimate
			\begin{align}
			\left|M_{ts} \right| \leq & C \left(c_1+c_2\right) \left(t-s\right)^{\alpha}.\label{estimate_M1}
			\end{align}
			Note that this is valid due to the fact that $\alpha+\beta\leq\rho$.\\
			
			Setting $N_{ts}:= \Xi_{ts}- M_{ts}$ immediately entails $N^n_{ts} \to N_{ts}$ as $n\to\infty$ for all $0 \leq s \leq t \leq T$ and
			\begin{align}
			\left|N_{ts} \right| \leq & C \left(c_1+c_2\right) \left(t-s\right)^{\alpha}.\label{estimate_N1}
			\end{align}
			Furthermore this also yields that $(\hat\delta_2 N) \equiv 0$.\\
			

			To prove this statement, note that it is equivalent to show that
			\begin{align*}
			(\hat\delta_2 N^n)_{t \tau s} \to 0, \quad \text{as } n \to 0,
			\end{align*}
			for all $0 \leq s \leq \tau \leq t \leq T$.\\
			
			To this aim we consider  a fixed interval $[u,v] \in \aP_n(s,t)$ and a finite number of nodes $u=r_0< r_1 < r_2 <\ldots < r_k <r_{k+1}=v$, for $k \in \IN$. Then,
			\begin{align*}
			S(t-v) \Xi_{vu} - \sum\limits_{j=0}^{k}{S(t-r_{j+1}) \Xi_{r_{j+1}r_j}}
			= \sum\limits_{j=0}^{k-1}{S(t-v) (\hat\delta_2 \Xi)_{v r_{j+1}r_j}}.
			\end{align*}
			Hence, we estimate
			\begin{align}
			\left|S(t-v) \Xi_{vu} - \sum\limits_{j=0}^{k}{S(t-r_{j+1}) \Xi_{r_{j+1}r_j}} \right|
			\leq 
			\begin{cases}
			C c_2 k u^{-\beta} (v-u)^{\rho}, & u>s \\
			C c_1 k (v-u)^\alpha, & u=s.
			\end{cases} \label{estimate_helphatdelta}
			\end{align}
			For fixed $0 \leq s \leq \tau \leq t \leq T$ we define $\pi_n(s,t):=\aP_n(s,t)\cup \aP_n(s,\tau)\cup \aP_n(\tau,t)$ and $\pi_n(s,\tau):=\pi_n(s,t)\cap[s,\tau]$, $\pi_n(\tau,t):=\pi_n(s,t)\cap[\tau,t]$.\\
			
			We define $N^{\pi_n}_{ts}$, $N^{\pi_n}_{t\tau}$ and $N^{\pi_n}_{\tau s}$ analogously to $N^n$, i.e.
			\begin{align*}
			N^{\pi_n}_{ts}:= \sum\limits_{[u,v] \in \pi_n(s,t)}{S(t-v) \Xi_{vu}}.
			\end{align*}

			Since $\pi_n(s,t)=\pi_n(s,\tau) \cup \pi_n(\tau,t)$ one obtains that $(\hat\delta_2 N^{\pi_n})_{t \tau s}=0$. 
			Consequently, we estimate
			\begin{align*}
			\left|(\hat\delta_2 N^n)_{t \tau s} \right|  
			\leq &\left|N^n_{ts} -N^{\pi_n}_{ts} \right| 
			+ \left|N^n_{t\tau} -N^{\pi_n}_{t\tau} \right| 
			+ \left|S(t-\tau) \left(N^n_{\tau s} -N^{\pi_n}_{\tau s}\right) \right|.
			\end{align*}
			Therefore it is left to show that all three summands tend to zero as $n \to \infty$. To this aim consider an arbitrary interval $[u,v] \in \aP_n(s,t)$ and let $k := \left\lfloor \frac{t-s}{\tau-s}\right\rfloor \vee \left\lfloor \frac{t-s}{t-\tau}\right\rfloor$. Then there are at most $k+1$ many nodes of $\pi_n$ between $u$ and $v$.
			Thus by \eqref{estimate_helphatdelta} we have  
			\begin{align*}
			\left|N^n_{ts} -N^{\pi_n}_{ts} \right| 
			\leq C (k+1) (c_1+c_2) \left(\left(\frac{t-s}{2^n} \right)^\alpha 
			+ \sum\limits_{\substack{\left[u,v \right]\in \aP_n(s,t) \\ u \neq s}} u^{-\beta} \left(\frac{t-s}{2^n}\right)^\rho \right) \to 0, \text{ as } n \to \infty. 
			\end{align*}
			The other summands tend to zero analogously.\\
			
			Since $(\hat\delta_2 N) \equiv 0$ we can apply Lemma \ref{lemma_preliminaries_hatdelta}. This ensures the unique existence of $\aI\Xi \in C([0,T],W)$ such that 
			\begin{align}
			\aI\Xi_0 &=0, \nonumber\\
			(\hat\delta \aI\Xi)_{ts} &= N_{ts} = \Xi_{ts}- M_{ts},  \text{  for all } 0 \leq s < t \leq T\label{stern}.
			\end{align}
			Hence, \eqref{estimate_N1} implies
			\begin{align*}
			\left| (\hat\delta \aI\Xi)_{ts} \right| \leq C \left(c_1+c_2\right) \left(t-s\right)^{\alpha}.
			\end{align*}
			We now show \eqref{property_IXiest1}. To this aim, if $s>0$ we apply \eqref{assumption_deltaXi1} to all summands in (\ref{estimate_Mn}) and obtain
			\begin{align*}
			\left| M^n_{ts} - M^{n+1}_{ts} \right| 
			\leq &C \sum\limits_{\left[u,v\right]\in \aP_n} { c_2 u^{-\beta} \left(t-s\right)^{\rho}2^{-n\rho}} \\
			\leq & C c_2 s^{-\beta} \left(t-s\right)^{\rho} 2^{-n(\rho-1)}.
			\end{align*} 
			Consequently,
			\begin{align}
			\left|M_{ts} \right| 
			\leq & C c_2 s^{-\beta} \left(t-s\right)^{\rho}, \label{estimate_M2}
			\end{align}
			which yields \eqref{property_IXiest1}.
		\qed \\ \end{proof}

		 The following result gives us an additional estimate necessary for the fixed-point argument. 
		\begin{corollary}\label{corollary_sewing_IXiestimate}
			Additionally to the assumptions of Theorem \ref{lemma_sewing}, we let
			\begin{align}
			\left|\left(\hat\delta_2 \Xi\right)_{vmu} \right| & \leq c_3 u^{-\beta'} \left(v-u \right)^{\rho'},\label{assumption_deltaXi2}
			\end{align}
			with $0 \leq\beta,~\beta' \leq 1$ and $\rho'-\beta' \leq \rho-\beta$.\\[1ex]
			Then it holds 
			\begin{align}
			\left|(\hat\delta \aI \Xi)_{ts}-\Xi_{ts} \right|&\leq C (c_2+c_3) s^{-\beta'} \left(t-s \right)^{\rho'}.
			\label{property_IXiest2}
			\end{align}
		\end{corollary}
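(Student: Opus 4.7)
The plan is to piggyback on the construction already furnished by Theorem~\ref{lemma_sewing} and merely sharpen the terminal estimate on $M_{ts}$ using the additional bound (\ref{assumption_deltaXi2}). The proof of the theorem already delivers $\aI\Xi \in C([0,T],W)$ together with the identity $(\hat\delta \aI\Xi)_{ts} - \Xi_{ts} = -M_{ts}$ coming from (\ref{stern}), so the task reduces to showing
\[
|M_{ts}| \leq C(c_2+c_3)\, s^{-\beta'}(t-s)^{\rho'}.
\]

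Recall that $M^{0}\equiv 0$, that $M^{n}_{ts}\to M_{ts}$ as $n\to\infty$, and that
$M^{n}_{ts} - M^{n+1}_{ts} = -\sum_{[u,v]\in \aP_n} S(t-v)\, (\hat\delta_2 \Xi)_{vmu}$. The plan is to rerun the derivation of (\ref{estimate_M2}) verbatim, inserting (\ref{assumption_deltaXi2}) in place of (\ref{assumption_deltaXi1}). Assuming $s>0$ (the bound being vacuous otherwise whenever $\beta'>0$), the monotonicity $u\ge s$ together with $v-u=(t-s)/2^{n}$ and $\#\aP_n=2^{n}$ gives
\[
\bigl|M^{n}_{ts} - M^{n+1}_{ts}\bigr| \;\leq\; C\sum_{[u,v]\in\aP_n} c_3\, u^{-\beta'}(v-u)^{\rho'} \;\leq\; C\, c_3\, s^{-\beta'}(t-s)^{\rho'}\, 2^{-n(\rho'-1)}.
\]
Summing the geometric series in $n$ exactly as in the derivation of (\ref{estimate_M2}) and telescoping against $M^0\equiv 0$ yields $|M_{ts}| \leq C c_3\, s^{-\beta'}(t-s)^{\rho'}$, from which (\ref{property_IXiest2}) follows after enlarging $c_3$ to $c_2+c_3$.

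The essentially only thing to check is that the dyadic bookkeeping which produced the factor $s^{-\beta}$ in (\ref{estimate_M2}) survives the exponent replacement $(\beta,\rho)\mapsto(\beta',\rho')$. The compatibility condition $\rho'-\beta'\leq \rho-\beta$ is precisely what ensures that the new exponents fit into the same scheme as the ones already controlled in Theorem~\ref{lemma_sewing} and that the resulting geometric summation is of the same type as in the theorem's proof. No new algebraic structure, no new regularity requirement on $\Xi$, and no refinement of the sewing argument itself is required; the corollary is obtained by a single extra line of estimation appended to the proof of Theorem~\ref{lemma_sewing}.
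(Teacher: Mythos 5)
There is a genuine gap. Your single-line estimate applies the bound \eqref{assumption_deltaXi2} to \emph{every} one of the $2^n$ dyadic subintervals, which produces
\[
\left|M^{n}_{ts}-M^{n+1}_{ts}\right| \le C c_3\, s^{-\beta'}(t-s)^{\rho'}\,2^{-n(\rho'-1)},
\]
and the ensuing geometric series in $n$ converges only if $\rho'>1$. But nothing in the hypotheses of the corollary forces $\rho'>1$: Theorem~\ref{lemma_sewing} only guarantees $\rho>1$, and the added condition is merely $\rho'-\beta'\le\rho-\beta$, which is perfectly compatible with $\rho'\le 1$. In fact, in the place where the corollary is actually invoked (Lemma~\ref{lemma_y}, to establish \eqref{estimate_y_help1}) the choice is $\rho=\alpha+2\beta$, $\beta$-weight $2\beta$, $\rho'=\alpha+\beta$, $\beta'=\beta$, and since $\alpha\le\tfrac12$ and $\beta<\alpha$ one has $\rho'=\alpha+\beta<1$. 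So the series you write does not converge precisely in the regime where the corollary is needed.

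The paper avoids this by \emph{not} applying \eqref{assumption_deltaXi2} uniformly: it is used only on the first dyadic subinterval (the one touching $s$), contributing $c_3 s^{-\beta'}\bigl((t-s)/2^n\bigr)^{\rho'}$, which decays like $2^{-n\rho'}$ (only $\rho'>0$ is needed). On all remaining subintervals, where $u>s$, the original bound \eqref{assumption_deltaXi1} with exponent $\rho>1$ is used; the needed prefactor $s^{-\beta'}$ is then extracted by writing $u^{-\beta}\le s^{-\beta'}u^{-(\beta-\beta')}$ and invoking the Riemann-sum comparison $\sum u^{-(\beta-\beta')}(t-s)2^{-n}\le\int_s^t q^{-(\beta-\beta')}\,dq$, yielding a term of order $s^{-\beta'}(t-s)^{\rho-\beta+\beta'}2^{-n(\rho-1)}$, which is summable because $\rho>1$. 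The compatibility condition $\rho'-\beta'\le\rho-\beta$ then serves to compare the two contributions and keep the final bound in the form $s^{-\beta'}(t-s)^{\rho'}$. This ``first interval vs.\ the rest'' split is the actual content of the corollary's proof, and your write-up misses it.
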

		\begin{proof}
			The proof is analogous to the previous one. Recalling that
			\begin{align*}
			\left| M^n_{ts} - M^{n+1}_{ts} \right| 
			\leq & C \sum\limits_{[u,v]\in \aP_n} \left|(\hat\delta_2 \Xi)_{vmu}\right|, 
			\end{align*}
			we apply \eqref{assumption_deltaXi2} to the first summand and again \eqref{assumption_deltaXi1} to the other terms. This leads to
			\begin{align*}
			\left| M^n_{ts} - M^{n+1}_{ts} \right| 
			\leq & C c_3 s^{-\beta'} \left(t-s\right)^{\rho'} 2^{-n \rho'}
			+ C \sum\limits_{\substack{\left[u,v\right]\in \aP_n \\ u \neq s} } { c_2 u^{-\beta} \left(t-s\right)^{\rho}2^{-n\rho}} \\
			\leq & C c_3 s^{-\beta'} \left(t-s\right)^{\rho'} 2^{-n \rho'}
			+ C c_2 s^{-\beta'} \left(t-s\right)^{\rho-1} 2^{-n(\rho-1)} \sum\limits_{\substack{\left[u,v\right]\in \aP_n \\ u \neq s} } {u^{-(\beta-\beta')}(t-s) 2^{-n}} \\
			\leq & C c_3 s^{-\beta'} \left(t-s\right)^{\rho'} 2^{-n \rho'}
			+ C c_2 s^{-\beta'} \left(t-s\right)^{\rho-1} 2^{-n(\rho-1)} \int\limits_{s}^{t} q^{-(\beta-\beta')} dq \\
			\leq & C c_3 s^{-\beta'} \left(t-s\right)^{\rho'} 2^{-n \rho'} 
			+ C c_2 s^{-\beta'} \left(t-s\right)^{\rho-\beta+\beta'} 2^{-n(\rho-1)}.
			\end{align*}
			Since $\rho-\beta+\beta' \geq \rho'$ we have
			\begin{align*}
			\left| M_{ts}\right| \leq C (c_2+c_3) s^{-\beta'} \left(t-s\right)^{\rho'}.
			\end{align*}
		\qed \end{proof} \\
		In order to give a meaning to $\aI\Xi$ as a rough integral we firstly describe it as a limit of finite sums, compare Corollary 3.6 in \cite{Gubinelli}.  Note that in our case technical difficulties occur in the proof due to (\ref{estimate_M2}). 
		\begin{corollary}\label{corollary_sewing_integral}
			Under the assumptions of Theorem \ref{lemma_sewing} it holds that
			\begin{align}
			\left(\hat\delta \aI \Xi \right)_{ts} 
			&= \lim\limits_{\left|\aP\right| \to 0} \sum\limits_{\left[u,v\right] \in \aP} {S(t-v) \Xi_{vu}},
			\label{property_IXieq}
			\end{align}
			where $\left|\aP\right|$ stands for the mesh of the given partition $\aP=\aP(s,t)$.
		\end{corollary}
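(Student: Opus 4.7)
The plan is to reduce the claim to the dyadic convergence $N^n_{ts}\to N_{ts}=(\hat\delta\aI\Xi)_{ts}$ already obtained inside the proof of Theorem \ref{lemma_sewing} (cf.~(\ref{stern})), via a common-refinement argument in the spirit of the $\pi_n$-step used there to establish $\hat\delta_2 N\equiv 0$. For an arbitrary partition $\aP$ of $[s,t]$ I write $N^{\aP}_{ts}:=\sum_{[u,v]\in\aP}S(t-v)\Xi_{vu}$ and pick a dyadic partition $\aP_n=\aP_n(s,t)$ with comparable mesh $|\aP_n|\in[|\aP|/2,|\aP|]$, then form $\pi:=\aP\cup\aP_n$. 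The triangle inequality
\begin{align*}
|N^{\aP}_{ts}-N_{ts}| \leq |N^{\aP}_{ts}-N^{\pi}_{ts}| + |N^{\pi}_{ts}-N^{\aP_n}_{ts}| + |N^{\aP_n}_{ts}-N_{ts}|
\end{align*}
reduces everything to the two refinement differences, since the last term vanishes by Theorem \ref{lemma_sewing} (note $n\to\infty$ as $|\aP|\to 0$ thanks to $|\aP_n|\leq|\aP|$).

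Each refinement difference is a sum of one-node insertions, each contributing exactly $S(t-v)(\hat\delta_2\Xi)_{vmu}$, which is controlled by the assumptions (\ref{assumption_Xi})--(\ref{assumption_deltaXi1}) through the bound (\ref{estimate_helphatdelta}) already derived. For $|N^{\pi}_{ts}-N^{\aP_n}_{ts}|$, the inequality $|\aP_n|\leq|\aP|$ forces at most one new node per dyadic interval, so (\ref{estimate_helphatdelta}) with $k=1$ together with the integral estimate $\sum_{u>s}u^{-\beta}(v-u)\leq\int_s^t q^{-\beta}\,dq$ and the separate treatment of the leftmost interval in the case $u=s=0$ via (\ref{assumption_Xi}) yields an upper bound of order $|\aP_n|^{\alpha}+|\aP_n|^{\rho-1}$. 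For $|N^{\aP}_{ts}-N^{\pi}_{ts}|$, each interval $[u,v]\in\aP$ receives $k_{uv}\leq\lceil (v-u)/|\aP_n|\rceil$ new nodes, and the analogous summation collapses to order $|\aP|^{\alpha}+|\aP|^{\rho-1}$ precisely because $|\aP_n|\sim|\aP|$.

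The delicate step is the first difference, since a coarse interval of $\aP$ may absorb many dyadic nodes at once; the prefactor $k_{uv}\lesssim (v-u)/|\aP_n|$ is compensated only by the gain $\rho>1$ in (\ref{assumption_deltaXi1}) together with the calibration $|\aP_n|\sim|\aP|$. The boundary singularity at $u=s=0$ is handled exactly as in the proof of Theorem \ref{lemma_sewing}, by isolating the leftmost interval and invoking (\ref{assumption_Xi}) in place of (\ref{assumption_deltaXi1}).
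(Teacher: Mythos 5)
Your common-refinement strategy is a genuinely different route from the paper's, but as written it has a gap at the key step. You claim that calibrating the dyadic mesh so that $|\aP_n|\in[|\aP|/2,|\aP|]$ forces at most one new $\aP$-node to fall inside each dyadic interval; this is false. The mesh $|\aP|$ only bounds the \emph{largest} interval of $\aP$, not the number of nodes, so a partition of given mesh can cram arbitrarily many nodes into a single dyadic subinterval (take $\aP$ with one long interval of length $|\aP|$ at one end and a very fine grid elsewhere). Since the available refinement bound \eqref{estimate_helphatdelta} grows linearly in the number $k$ of inserted nodes, your estimate for $|N^{\pi}_{ts}-N^{\aP_n}_{ts}|$ becomes $\sum_i k_i\,u_i^{-\beta}|\aP_n|^{\rho}$ with $\sum_i k_i$ equal to the total node count of $\aP$; this quantity is not controlled by $|\aP|$ and need not tend to zero along a sequence of partitions whose mesh tends to zero. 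Repairing this requires a local sewing estimate that is uniform in $k$ (obtained, e.g., by a balanced binary insertion rather than one-at-a-time), which amounts to reproving the Sewing Lemma's core telescoping argument on each dyadic interval.

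The paper avoids the issue entirely by exploiting a structure already in hand: by \eqref{stern}, $\Xi_{vu}=(\hat\delta\aI\Xi)_{vu}+M_{vu}$, so that for \emph{any} partition $\aP$ the Riemann sum equals $(\hat\delta\aI\Xi)_{ts}+\sum_{[u,v]\in\aP}S(t-v)M_{vu}$ exactly, with no refinement comparison needed. It then shows $\sum|M_{vu}|\to 0$ using the two bounds \eqref{estimate_M1}--\eqref{estimate_M2} on $M$, splitting $\aP$ into $\widetilde{\aP}=\{[u,v]:u<v-u\}$ (handled by a geometric-decay argument on the left endpoints) and its complement (handled by $u^{-\beta}\leq 2^{\beta}v^{-\beta}$ and a Riemann-sum comparison with $\int_s^t q^{-\beta}\,dq$). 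That decomposition already encodes a bound uniform in the node count, which is exactly what your argument is missing.
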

		\begin{proof}

			Consider an arbitrary partition $\aP$ of $[s,t]$. Then we have by \eqref{stern} that
			\begin{align*}
			\sum\limits_{\left[u,v\right]\in \aP}{S(t-v) \Xi_{vu}} 
			&= \sum\limits_{\left[u,v\right]\in \aP}{S(t-v) \left((\hat\delta \aI\Xi)_{v u} + M_{v u} \right)} \\
			&= (\hat\delta \aI\Xi)_{ts} + \sum\limits_{\left[u,v\right]\in \aP} {S(t-v) M_{v u}}.
			\end{align*}
			Therefore it is left to show that
	\begin{align*}
			\lim\limits_{|\aP| \to 0} \sum\limits_{\left[u,v\right]\in \aP}{S(t-v) M_{v u}} = 0. 
			\end{align*}
			To this aim, we prove the sufficient statement
			\begin{align*}
			\lim\limits_{|\aP| \to 0} \sum\limits_{\left[u,v\right]\in \aP}\left|M_{v u}\right| = 0.
			\end{align*}
As concluded within the proof of Theorem \ref{lemma_sewing} we have two estimates for $M$, recall \eqref{estimate_M1} and \eqref{estimate_M2}. Namely we obtained that
\begin{align*}
\left|M_{vu} \right| &\leq C(c_1+c_2) (v-u)^{\alpha}, 
\\
\left|M_{vu} \right| &\leq C c_2 u^{-\beta} \left(v-u \right)^{\rho},~~\mbox{for } u \neq 0. 
\end{align*}
Clearly, which one of them is more restrictive depends on the relation between $u$ and $v-u$. \\

Hence, we introduce $\widetilde{\aP}: = \left\{\left[u,v\right] \in \aP \colon u<v-u \right\}$. We order the intervals of $\widetilde{\aP}$ by their starting point and write $\widetilde{\aP} = \left\{\left[\widetilde{u}_k,\widetilde{v}_k\right] \colon k=1, \ldots, m\right\}$,
 where $s\leq\wtu_1<\wtv_1 \leq \wtu_2 < \wtv_2 \leq \ldots \leq \wtu_m < \wtv_m \leq t$.\\
 
For $k=1, \ldots m-1$ we get
\begin{align*}
\wtu_k &<\frac{\wtv_k}{2} \leq \frac{\wtu_{k+1}}{2},
\end{align*} which yields
\begin{align*}
\wtu_k &< \wtu_m \, 2^{-(m-l)} < \left(\wtv_m-\wtu_m\right) 2^{-(m-l)} \leq \left|\aP \right| 2^{-(m-l)}.
\end{align*}
All in all this means that
\begin{align*}
\wtv_k-\wtu_k &\leq \wtv_k \leq \wtu_{k+1} \leq \left|\aP \right| 2^{-(m-l-1)}.
\end{align*}
For $k=m$ we trivially have $\wtv_m-\wtu_m \leq \left|\aP\right|\leq 2 \left|\aP \right|$.
Hence, by using \eqref{estimate_M1} we derive
\begin{align*}
\sum\limits_{[u,v] \in \widetilde{\aP}} \left|M_{vu} \right|
\leq C (c_1+c_2) \sum\limits_{k=1}^{m} \left(\wtv_k-\wtu_k \right)^{\alpha} 
\leq C (c_1+c_2)\left|\aP\right|^{\alpha}  \sum\limits_{k=1}^{m} 2^{-(m-l-1)\alpha}
\leq C (c_1+c_2) \left|\aP\right|^{\alpha}.
\end{align*}
	If $\left[u,v\right] \in \aP \backslash \! \widetilde{\aP} $ we have 
			\begin{align*}
			v-u \leq u \mbox{, so }  v\leq 2 u, \mbox{ therefore }u^{-\beta} \leq 2^\beta v^{-\beta}.
			\end{align*}
So, by applying \eqref{estimate_M2} we infer
\begin{align*}
\sum\limits_{\left[u,v\right]\in \aP\backslash\widetilde{\aP}} \left|M_{v u}\right|
&\leq C c_2 \sum\limits_{\left[u,v\right]\in \aP\backslash \tilde{\aP} } u^{-\beta} \left(v-u \right)^{\rho} \\
&\leq C c_2 \left|\aP \right|^{\rho-1} \sum\limits_{\left[u,v\right]\in \aP\backslash \tilde{\aP} } 
v^{-\beta} \left(v-u \right) \\
&\leq C c_2 \left|\aP \right|^{\rho-1} \int\limits_{s}^{t} q^{-\beta} dq \\
&\leq C c_2 \left(t-s \right)^{1-\beta} \left|\aP \right|^{\rho-1}.
\end{align*}
Consequently, putting both estimates together we have
\begin{align*}
\sum\limits_{\left[u,v\right]\in \aP}\left|M_{v u}\right| 
&\leq \sum\limits_{\left[u,v\right] \in \widetilde{\aP}}\left| M_{v u}\right|
		+ \sum\limits_{\left[u,v\right]\in \aP\backslash\widetilde{\aP}} \left|M_{v u}\right| \\
&\leq C (c_1+c_2) \left|\aP\right|^{\alpha} + C c_2 \left(t-s \right)^{1-\beta} \left|\aP \right|^{\rho-1},
\end{align*}
 which tends to $0$ as $ \left|\aP \right| \to 0$. This proves the statement.
			
		\qed \end{proof}
		\begin{remark}
	Note that the above limit is independent of the approximating sequence of partitions.
\end{remark}
	
The next result gives us straightforward the shift property of the constructed rough integral.
To this aim we introduce for $\tau>0$
\begin{align}\label{shift:process}
\theta_\tau \Xi_{vu} := \Xi_{v+\tau,u+\tau}.
\end{align}

\begin{corollary}\label{corollary_sewing_shift}
Under the assumptions of Theorem \ref{lemma_sewing} we have the shift property, namely
	\begin{align*}
	(\hat\delta \aI \Xi)_{ts} = (\hat\delta \aI \theta_{\tau} \Xi)_{t-\tau,s-\tau}, ~~\mbox{for } \tau \leq s \leq t.
	\end{align*}
\end{corollary}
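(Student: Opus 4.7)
The plan is to reduce the statement to the Riemann-sum representation of Corollary \ref{corollary_sewing_integral} and then perform a change of variables in the partition. First, I would verify that the shifted approximation term $\theta_\tau \Xi$, now defined on $\Delta_{T-\tau}$, still satisfies the hypotheses of Theorem \ref{lemma_sewing}, so that $\aI \theta_\tau \Xi$ is a legitimate object. The bound $|(\theta_\tau\Xi)_{vu}| = |\Xi_{v+\tau,u+\tau}|\leq c_1 (v-u)^\alpha$ is immediate, and
\begin{align*}
|(\hat\delta_2 \theta_\tau\Xi)_{vmu}| = |(\hat\delta_2 \Xi)_{v+\tau,m+\tau,u+\tau}| \leq c_2 (u+\tau)^{-\beta}(v-u)^\rho \leq c_2 u^{-\beta}(v-u)^\rho
\end{align*}
since $\tau\geq 0$ and $\beta\geq 0$.

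Next, I would apply Corollary \ref{corollary_sewing_integral} to both sides. For any partition $\aP$ of $[s-\tau,t-\tau]$, the translated family $\aP+\tau := \{[u+\tau,v+\tau]:[u,v]\in\aP\}$ is a partition of $[s,t]$ with the same mesh size. Via the substitution $u' := u+\tau$, $v' := v+\tau$, one obtains
\begin{align*}
\sum_{[u,v]\in \aP}S\bigl((t-\tau)-v\bigr)(\theta_\tau \Xi)_{vu} = \sum_{[u',v']\in \aP+\tau} S(t-v')\, \Xi_{v'u'}.
\end{align*}
Letting $|\aP|\to 0$, the left side converges to $(\hat\delta \aI\theta_\tau \Xi)_{t-\tau,s-\tau}$ by Corollary \ref{corollary_sewing_integral}, while the right side is a Riemann sum over partitions $\aP+\tau$ of $[s,t]$ whose mesh also tends to zero; by the remark following that corollary, its limit is independent of the approximating sequence and therefore equals $(\hat\delta \aI \Xi)_{ts}$. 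Equating the two limits gives the claimed identity.

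There is no substantial obstacle here: the only minor point to be careful about is justifying that the hypotheses of the Sewing Lemma transfer to $\theta_\tau\Xi$ with the \emph{same} constants (so that $\aI\theta_\tau\Xi$ is even defined), and checking that the translated partitions legitimately approximate $[s,t]$ with arbitrary small mesh, both of which are handled by the observations above.
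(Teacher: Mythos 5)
Your proof is correct and follows essentially the same route as the paper's: both sides are rewritten via Corollary \ref{corollary_sewing_integral} as limits of Riemann-type sums, and the shift by $\tau$ is absorbed by a change of variables in the partition. Your additional check that $\theta_\tau\Xi$ satisfies the hypotheses of Theorem \ref{lemma_sewing} (with the same constants, since $(u+\tau)^{-\beta}\leq u^{-\beta}$) makes explicit a point the paper leaves implicit, and is a sensible inclusion.
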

\begin{proof}
	The proof is a direct consequence of Corollary \ref{corollary_sewing_integral}.
	\begin{align*}
	(\hat\delta \aI \Xi)_{ts}
	&= \lim\limits_{\left|\aP \right| \to 0} \sum\limits_{[u,v] \in \aP(s,t)} S(t-v) \Xi_{vu}\\
	&= \lim\limits_{\left|\aP \right| \to 0} \sum\limits_{[u,v] \in \aP(s-\tau,t-\tau)} S(t-\tau-v) \Xi_{v+\tau,u+\tau}\\
	&= \lim\limits_{\left|\aP \right| \to 0} \sum\limits_{[u,v] \in \aP(s-\tau,t-\tau)} S(t-\tau-v) \theta_{\tau} \Xi_{vu} \\
	&= (\hat\delta \aI \theta_{\tau} \Xi)_{t-\tau,s-\tau}.
	\end{align*}
	\qed
\end{proof}\\
	The next result contains necessary estimates for $\hat\delta \mathcal{I}\Xi$ in a suitable fractional domain. These will be required later on (Corollary~\ref{corollary_sewing_deltaestimate}) to estimate $\delta\mathcal{I}\Xi$.
		\begin{corollary}\label{corollary_sewing_epsestimate}
			Under the assumptions of Theorem \ref{lemma_sewing} we have
			\begin{align}
			\left|\left(\hat\delta \aI \Xi \right)_{ts} \right|_{D_\eps}
			&\leq C \left(c_1+c_2 \right) \left(t-s \right)^{\alpha-\eps}  \text {,  for all } 0\leq \eps<\alpha.
			\label{property_deltaIXiest_eps}
			\end{align}
		\end{corollary}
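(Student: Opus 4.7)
The plan is to leverage the smoothing estimate \eqref{hg1} to upgrade the $W$-bound \eqref{property_deltaIXiest} from Theorem~\ref{lemma_sewing} to the required $D_\eps$-bound. Since $(\hat\delta \aI\Xi)_{ts}$ does not itself carry a factor $S(\cdot)$ to which \eqref{hg1} can be applied directly, I would first decompose it into an infinite sum of pieces, each prefixed by $S(t-t_k)$ with $t-t_k>0$, so that the semigroup smoothing becomes available term by term.

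The decomposition arises by iterating the identity $\hat\delta_2\hat\delta \aI\Xi\equiv 0$ from Lemma~\ref{lemma_preliminaries_hatdelta}\,(i), which reads
\begin{align*}
(\hat\delta \aI\Xi)_{ts} = (\hat\delta \aI\Xi)_{t,t'} + S(t-t')(\hat\delta \aI\Xi)_{t',s}
\end{align*}
for any $t'\in(s,t)$. Applying this with the sequence $t_k:=t-(t-s)2^{-k}$ (so $t_0=s$, $t_k\nearrow t$, and $t-t_k=t_k-t_{k-1}=(t-s)2^{-k}$), a straightforward induction gives, for every $N\geq 1$,
\begin{align*}
(\hat\delta \aI\Xi)_{ts} = (\hat\delta \aI\Xi)_{t,t_N} + \sum_{k=1}^{N} S(t-t_k)\,(\hat\delta\aI\Xi)_{t_k,t_{k-1}}.
\end{align*}
By \eqref{property_deltaIXiest} the remainder is controlled in $W$ by $C(c_1+c_2)(t-s)^\alpha 2^{-N\alpha}$ and so vanishes as $N\to\infty$.

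It then remains to bound each summand in $D_\eps$. Combining $\|S(t-t_k)\|_{\mathcal{L}(W,D_\eps)}\leq C(t-t_k)^{-\eps}=C((t-s)2^{-k})^{-\eps}$ from \eqref{hg1} with $|(\hat\delta\aI\Xi)_{t_k,t_{k-1}}|_W\leq C(c_1+c_2)((t-s)2^{-k})^{\alpha}$ from Theorem~\ref{lemma_sewing} produces the bound $C(c_1+c_2)(t-s)^{\alpha-\eps}2^{-k(\alpha-\eps)}$, the $k$-th term of a convergent geometric series since $\alpha>\eps$. Summing this series yields precisely \eqref{property_deltaIXiest_eps}. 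The main point requiring care is to identify the $D_\eps$-limit of the infinite sum with $(\hat\delta\aI\Xi)_{ts}$ itself; this follows because the continuous embedding $D_\eps\hookrightarrow W$ forces the $D_\eps$-limit to coincide with the $W$-limit, which the remainder estimate has already identified as $(\hat\delta\aI\Xi)_{ts}$.
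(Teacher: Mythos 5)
Your proof is correct and takes a genuinely different — and arguably cleaner — route than the paper's. The paper re-enters the sewing-lemma machinery: it introduces the modified dyadic partial sums $\overline{N}^n_{ts}$ that omit the final subinterval (so that every term carries a prefactor $S(t-v)$ with $t-v>0$), redoes the telescoping estimate $\overline{N}^n-\overline{N}^{n+1}$ directly in $D_\eps$, invokes the combinatorial Beta-function bound of Lemma~\ref{lemma_appendix1} to control the double sum $\sum (t-v)^{-\eps}u^{-\beta}$, and then shows $\overline{N}\equiv N$. You instead treat Theorem~\ref{lemma_sewing} as a black box and post-process its $W$-valued conclusion: the cocycle identity $(\hat\delta\aI\Xi)_{ts}=(\hat\delta\aI\Xi)_{t,t'}+S(t-t')(\hat\delta\aI\Xi)_{t',s}$ (from Lemma~\ref{lemma_preliminaries_hatdelta}\,(i)) lets you peel off dyadic pieces $S(t-t_k)(\hat\delta\aI\Xi)_{t_k,t_{k-1}}$ accumulating at $t$, each of which is individually smoothed by~\eqref{hg1}, and the result is a clean geometric series $\sum 2^{-k(\alpha-\eps)}$ with no need for the appendix lemma. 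Your final identification of the $D_\eps$-limit via the continuous embedding $D_\eps\hookrightarrow W$ is exactly the right way to close the argument. What the paper's approach buys is that it stays inside the sewing construction and simultaneously produces the intermediate objects $\overline{N}^n$, which is a bit more self-contained; what yours buys is a shorter proof that avoids the combinatorial bookkeeping entirely and makes transparent that the smoothing gain $(t-s)^{-\eps}$ really comes from the semigroup acting on the Hölder increment, with the exponent loss visible term by term.
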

		\begin{proof}
			Analogously to the proof of Theorem \eqref{lemma_sewing} we introduce
			\begin{align*}
			\overline{N}^n_{ts} &:= \sum\limits_{\substack{\left[u,v\right]\in \aP_n\\ v \neq t}} S(t-v) \Xi_{vu}, \\
			\overline{M}^n_{ts} &:= \Xi_{ts} - \overline{N}^n_{ts}. \\
			\end{align*}
			As mentioned in the proof of Theorem \ref{lemma_sewing} we have $\overline{N}^0_{ts} = 0 $ which means that $\overline{M}^0_{ts} =\Xi_{ts}$.\\
			
			We further set $\overline{v}_n := \max\left\{v < t \colon [u,v] \in \aP_n \right\}$. Then we derive
			\begin{align*}
			\oN^n_{ts} - \oN^{n+1}_{ts} & = \oM^n_{ts} - \oM^{n+1}_{ts} \\
			& = N^n_{ts} - N^{n+1}_{ts} - \Xi_{t \ov_{n}} + \Xi_{t \ov_{n+1}} \\
			&= \sum\limits_{\substack{\left[u,v\right]\in \aP_n \\ v \neq t}} S(t-v) (\hat\delta_2 \Xi)_{vmu} 
			+ (\hat\delta \Xi)_{t \ov_{n+1}\ov_{n}} - \Xi_{t \ov_{n}} + \Xi_{t \ov_{n+1}} \\
			&=\sum\limits_{\substack{\left[u,v\right]\in \aP_n \\ v \neq t}} S(t-v) (\hat\delta_2 \Xi)_{vmu} - S(t-\ov_{n+1}) \Xi_{\ov_{n+1}\ov_n}.
			\end{align*}
			For $\eps < \alpha$ we estimate
			\begin{align*}
			\left| \oN^n_{ts} - \oN^{n+1}_{ts}  \right|_{D_\eps} 
			\leq C \left(t-\ov_{n+1} \right)^{-\eps}\left|\Xi_{\ov_{n+1}\ov_n}\right| + C \sum\limits_{\substack{\left[u,v\right]\in \aP_n \\ v \neq t}} {\left(t-v \right)^{-\eps} \left|(\hat\delta_2 \Xi)_{vmu}\right|} .
			\end{align*}
			We now apply \eqref{assumption_Xi} to the first summand and \eqref{assumption_deltaXi1} to the others. Note that $t-\ov_{n+1}= \ov_{n+1}-\ov_n$ and for $n=0$ the sum is zero while for $n \geq 1 $ we get $t-v \geq \frac{t-s}{2}$ if $[s,v] \in \aP_n$. Keeping this in mind, we have
			\begin{align*}
			\left| \oN^n_{ts} - \oN^{n+1}_{ts}  \right|_{D_\eps}
			\leq & C c_1 (t-s)^{\alpha-\eps} 2^{-n(\alpha-\eps)} 
			+ C c_2 \sum\limits_{\substack{\left[u,v\right]\in \aP_n \\ u \neq s, v \neq t}} {\left(t-v \right)^{-\eps} u^{-\beta} (t-s)^{\rho}2^{-n\rho}} \\
			= & C c_1 (t-s)^{\alpha-\eps} 2^{-n(\alpha-\eps)} 
			+ C c_2 (t-s)^{\rho}2^{-n\rho} \sum\limits_{\substack{\left[u,v\right]\in \aP_n \\ u \neq s, v \neq t}} {\left(t-v \right)^{-\eps} u^{-\beta} }.
			\end{align*}
			We now have to estimate the term
			\begin{align*}
			J_{ts}
			&:= \sum\limits_{\substack{\left[u,v\right]\in \aP_n \\ u \neq s , v \neq t} } {\left(t-v \right)^{-\eps} u^{-\beta} }\\
			&= \sum\limits_{k=1}^{2^n-2} \left(s+\frac{k (t-s)}{2^n} \right)^{-\beta} \left(t-s-\frac{(k+1)(t-s)}{2^n} \right)^{-\eps}\\
			&\leq \left(t-s \right)^{-\beta-\eps} 2^{n(\beta+\eps)} \sum\limits_{k=1}^{2^n-2} {k^{-\beta} \left(2^n-1-k \right)^{-\eps}}.
			\end{align*}
			By Lemma \ref{lemma_appendix1}  we obtain
			\begin{align*}
			J_{ts}& \leq \left(t-s \right)^{-\beta-\eps} 2^{n(\beta+\eps)} 
			\sum\limits_{k=1}^{2^n-2} {k^{-\beta} \left(2^n-1-k \right)^{-\eps}} \\
			&\leq C\left(t-s \right)^{-\beta-\eps} 2^{n(\beta+\eps)} 
			\sum\limits_{k=0}^{2^n-2} {\left(k+1 \right)^{-\beta} \left(2^n-1-k \right)^{-\eps}} \\
			& = C\left(t-s \right)^{-\beta-\eps} 2^{n(\beta+\eps)}
			\sum\limits_{j=1}^{2^n-1} {j^{-\eps} \left(2^n-j \right)^{-\beta}}.
			\end{align*}
			Using again Lemma \ref{lemma_appendix1} entails
			\begin{align*}
			J_{ts} &\leq C \left(t-s \right)^{-\beta-\eps} 2^{n(\beta+\eps)}
			\sum\limits_{j=1}^{2^n-1} {j^{-\eps} \left(2^n-j \right)^{-\beta}} \\
			&\leq C\left(t-s \right)^{-\beta-\eps} 2^{n(\beta+\eps)}
			\sum\limits_{j=0}^{2^n-1} {\left(j+1\right)^{-\eps} \left(2^n-j \right)^{-\beta}} \\
			&\leq C \left(t-s \right)^{-\beta-\eps} 2^{n(\beta+\eps)}
			\int\limits_{0}^{2^n} {q^{-\eps} (2^n-q)^{-\beta}}dq \\
			&=C\left(t-s \right)^{-\beta-\eps} 2^n B(1-\eps,1-\beta).
			\end{align*}
			Consequently, this results in 
			\begin{align}
			\left| \oN^n_{ts} - \oN^{n+1}_{ts}  \right|_{D_\eps} 
			\leq C c_1 (t-s)^{\alpha-\eps} 2^{-n(\alpha-\eps)} 
			+C c_2 \left(t-s \right)^{\rho-\eps-\beta}2^{-n(\rho-1)}.
			\end{align}
			Since the right hand side is again summable, we obtain that $\oN^n_{ts} \to \oN_{ts}$ in $D_\eps$ as $n\to\infty$. 
			Regarding that $\alpha\leq \rho-\beta$, leads to the estimate
			\begin{align*}
			\left|\oN_{ts} \right|_{D_\eps} \leq C \left(c_1+c_2 \right) \left(t-s \right)^{\alpha-\eps}.
			\end{align*}
			In order to obtain \eqref{property_deltaIXiest_eps} we only have to show that $\oN \equiv N$. We have
			\begin{align*}
			\left|N_{ts}-\oN_{ts} \right| = \lim\limits_{n \to \infty} \left|N^n_{ts}-\oN^n_{ts} \right|
			= \lim\limits_{n \to \infty}\left|\Xi_{t \ov_n} \right|
			\stackrel{\eqref{assumption_Xi}}{\leq} 
			c_1 \lim\limits_{n \to \infty} \left(t-\ov_{n+1} \right)^\alpha =0.
			\end{align*}
			Therefore we conclude that $\oN \equiv N$, which proves the statement. 
			\qed \\ \end{proof}
	
	Now we can apply these results to estimate $\delta\aI\Xi$. 
		\begin{corollary}\label{corollary_sewing_deltaestimate}
			Given the assumptions of Theorem \ref{lemma_sewing}. Then for all $\gamma < \alpha$ it holds
			\begin{align}
			\left| \left(\delta \aI\Xi\right)_{ts} \right| 
			\leq C (c_1+c_2) \left(t-s \right)^{\gamma} T^{\alpha-\gamma}.
			\end{align}
		\end{corollary}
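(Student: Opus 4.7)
The plan is to decompose $(\delta \mathcal{I}\Xi)_{ts}$ into a piece already controlled by the Sewing Lemma and a remainder arising from the fact that $S$ is not the identity. Specifically, I would write
\begin{align*}
(\delta \mathcal{I}\Xi)_{ts} = \mathcal{I}\Xi_t - \mathcal{I}\Xi_s = (\hat\delta \mathcal{I}\Xi)_{ts} + (S(t-s) - \mathrm{Id})\mathcal{I}\Xi_s,
\end{align*}
so the task splits into bounding the two summands separately. The first is already handled by estimate~\eqref{property_deltaIXiest} from Theorem~\ref{lemma_sewing}, giving $|(\hat\delta \mathcal{I}\Xi)_{ts}| \leq C(c_1+c_2)(t-s)^\alpha$.

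For the second summand the idea is to trade H\"older regularity against smoothing. By the semigroup estimate~\eqref{hg2} we have $\|S(t-s) - \mathrm{Id}\|_{\mathcal{L}(D_\eps,W)} \leq C(t-s)^\eps$ for any $\eps \in [0,1]$, so it suffices to control $\mathcal{I}\Xi_s$ in the fractional domain $D_\eps$. Since $\mathcal{I}\Xi_0 = 0$, we have $\mathcal{I}\Xi_s = (\hat\delta \mathcal{I}\Xi)_{s0}$, and Corollary~\ref{corollary_sewing_epsestimate} yields
\begin{align*}
|\mathcal{I}\Xi_s|_{D_\eps} \leq C(c_1+c_2)\, s^{\alpha-\eps},
\end{align*}
provided $\eps < \alpha$. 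Combining these two ingredients and bounding $s^{\alpha-\eps} \leq T^{\alpha-\eps}$ gives $|(S(t-s) - \mathrm{Id})\mathcal{I}\Xi_s| \leq C(c_1+c_2)(t-s)^\eps T^{\alpha-\eps}$.

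To conclude, I choose $\eps = \gamma$ and observe that the first summand also fits into the claimed bound since $(t-s)^\alpha = (t-s)^\gamma (t-s)^{\alpha-\gamma} \leq (t-s)^\gamma T^{\alpha-\gamma}$. Adding the two pieces gives exactly $|(\delta \mathcal{I}\Xi)_{ts}| \leq C(c_1+c_2)(t-s)^\gamma T^{\alpha-\gamma}$. The argument is essentially routine given the earlier corollaries; the only subtlety is the strict inequality $\gamma < \alpha$, which is forced by the need to apply Corollary~\ref{corollary_sewing_epsestimate} with $\eps < \alpha$, and this is precisely why the exponent must be strictly less than $\alpha$ in the conclusion.
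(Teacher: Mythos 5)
Your proposal is correct and follows essentially the same route as the paper: the decomposition $(\delta\aI\Xi)_{ts} = (\hat\delta\aI\Xi)_{ts} + (S(t-s)-\mathrm{Id})(\hat\delta\aI\Xi)_{s0}$, the application of \eqref{property_deltaIXiest} to the first term, and the combination of \eqref{hg2} with Corollary~\ref{corollary_sewing_epsestimate} (taking $\eps=\gamma$) for the second. The only cosmetic difference is that you spell out the identity $\aI\Xi_s=(\hat\delta\aI\Xi)_{s0}$, which the paper uses implicitly.
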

		\begin{proof}
			By applying \eqref{property_deltaIXiest} and \eqref{property_deltaIXiest_eps} with $\eps=\gamma$ we get
			\begin{align*}
			\left| \left(\delta \aI\Xi\right)_{ts} \right| 
			\leq &\left| \left(\hat\delta \aI\Xi\right)_{ts} \right| + \left| \left(S(t-s)-\mbox{Id} \right) (\hat\delta\aI\Xi)_{s0} \right| \\
			\leq & C (c_1+c_2)\left(t-s \right)^{\alpha}  + C \left(t-s \right)^\gamma  \left| (\hat\delta\aI\Xi)_{s0}\right|_{D_\gamma} \\
			\leq & C (c_1+c_2)\left(t-s \right)^{\alpha}  + C (c_1+c_2)\left(t-s \right)^{\gamma} s^{\alpha-\gamma} \\
			\leq & C (c_1+c_2) \left(t-s \right)^{\gamma} T^{\alpha-\gamma}. 
			\end{align*}
		\qed  \end{proof}\\
		This immediately implies the next result.
		\begin{corollary}\label{corollary_sewing_norm}
			Given the assumptions of Theorem \ref{lemma_sewing}. Then for all $\gamma < \alpha$ it holds
			\begin{align}
			\left\|\aI\Xi \right\|_{\gamma} &\leq C(c_1+c_2) T^{\alpha-\gamma}, \\
			\left\|\aI\Xi \right\|_{\gamma,\gamma} &\leq C(c_1+c_2) T^{\alpha}.
			\end{align}
		\end{corollary}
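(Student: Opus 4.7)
The plan is to read both inequalities off directly from Corollary~\ref{corollary_sewing_deltaestimate} together with the basic estimate \eqref{property_deltaIXiest} of Theorem~\ref{lemma_sewing}, exploiting the normalization $\aI\Xi_0=0$. No new construction is needed; the corollary is essentially bookkeeping.

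For the first bound, I would use that $\aI\Xi_0=0$ so that $(\delta\aI\Xi)_{ts}=\aI\Xi_t-\aI\Xi_s$ is exactly the quantity estimated in Corollary~\ref{corollary_sewing_deltaestimate}. Hence the Hölder seminorm part of $\|\aI\Xi\|_\gamma$ is bounded by $C(c_1+c_2)T^{\alpha-\gamma}$. The contribution of $\sup_t |\aI\Xi_t|$ (or equivalently $|\aI\Xi_0|=0$ together with the Hölder seminorm) is handled by specializing $s=0$, which yields $|\aI\Xi_t|=|(\hat\delta\aI\Xi)_{t0}|\le C(c_1+c_2)t^\alpha$ via \eqref{property_deltaIXiest}, and absorbing $t^\alpha \le T^\gamma \cdot T^{\alpha-\gamma}$.

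For the second bound, the weighted Hölder part is where the factor $s^\gamma$ does the crucial work. By Corollary~\ref{corollary_sewing_deltaestimate},
\begin{align*}
s^\gamma \frac{|(\delta\aI\Xi)_{ts}|}{(t-s)^\gamma} \le C(c_1+c_2)\, s^\gamma\, T^{\alpha-\gamma} \le C(c_1+c_2)\, T^\alpha,
\end{align*}
so the interpolation weight $s^\gamma$ exactly kills the singular factor $T^{\alpha-\gamma}$. For the supremum part, I would again use \eqref{property_deltaIXiest} with $s=0$ to get $|\aI\Xi_t|\le C(c_1+c_2) T^\alpha$.

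There is no genuine obstacle here; the only small subtlety is that one must combine the two different estimates on $\hat\delta\aI\Xi$ (Hölder of order $\alpha$ from Theorem~\ref{lemma_sewing}, versus Hölder of order $\gamma$ with a $T^{\alpha-\gamma}$ prefactor from Corollary~\ref{corollary_sewing_deltaestimate}) in the right way so that the singular $T^{\alpha-\gamma}$ is compensated by the weight $s^\gamma$ in the $\|\cdot\|_{\gamma,\gamma}$-norm. Once that is observed, both inequalities drop out immediately.
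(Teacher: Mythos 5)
Your proposal is correct and follows exactly the route the paper intends: the paper's own ``proof'' is the single sentence ``This immediately implies the next result,'' placed right after Corollary~\ref{corollary_sewing_deltaestimate}, so the intended argument is precisely the bookkeeping you carry out, combining the $\gamma$-H\"older bound with prefactor $T^{\alpha-\gamma}$ from Corollary~\ref{corollary_sewing_deltaestimate}, the bound \eqref{property_deltaIXiest}, and the normalization $\aI\Xi_0=0$. The only cosmetic point is that for the first inequality the supremum part gives $C(c_1+c_2)T^\alpha$ rather than $T^{\alpha-\gamma}$; this is harmless (and inherent in the paper's own statement) since these bounds are used for $T\leq 1$, where $T^\alpha\leq T^{\alpha-\gamma}$.
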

		
		\begin{remark}\label{remark_sewing_linearity}
			Note that by construction $\aI$ is a linear mapping. More precisely, according to \cite[Section 4]{FritzHairer} or \cite[Section 3.3]{GubinelliLejayTindel} one can introduce the space $\hat{C}^{\alpha,\rho,\beta}(\Delta_T,W)$ of all elements $\Xi$ satisfying assumptions \eqref{assumption_Xi} and \eqref{assumption_deltaXi1}.\\
			  Then one can show that the mapping $\aI \colon \hat{C}^{\alpha,\rho,\beta}(\Delta_T,W) \to C^{\gamma,\gamma}([0,T],W) $, for $\gamma<\alpha$, is linear.\\
			  
			  In particular, considering $\Xi^1$, $\Xi^2$ with
			\begin{align*}
			\left|\Xi^1_{vu}-\Xi^2_{vu}\right| &\leq \widetilde{c}_1 \left(v-u \right)^{\alpha},\\
			\left|\left(\hat\delta_2 \Xi^1\right)_{vmu}- \left(\hat\delta_2 \Xi^2\right)_{vmu} \right| & \leq \widetilde{c}_2 u^{-\beta} \left(v-u \right)^{\rho},\quad
			\mbox{for all } 0 < u \leq m \leq v \leq T,
			\end{align*}
			yields
			\begin{align*}
			\left\|\aI\Xi^1 - \aI\Xi^2 \right\|_{\gamma,\gamma} 
			=  \left\|\aI(\Xi^1 - \Xi^2) \right\|_{\gamma,\gamma}
			\leq C(\widetilde{c}_1+\widetilde{c}_2) T^{\alpha}.
			\end{align*}
		\end{remark}

		\section{Construction of the supporting processes}\label{sec:supp:proc}
		We recall that $\omega \in C^{\alpha}\left(\left[0,T\right],V \right)$ stands for a fractional Brownian motion and $\omegaa \in C^{2\alpha}\left(\Delta_T,V \otimes V \right)$ for its L\'evy-area. 
		\begin{remark}
			In this setting $V\otimes V $ denotes the usual tensor product of Hilbert spaces. If one wishes to work in Banach spaces, then one should consider the projective tensor product, since the property
			\begin{align*}
			\mathcal{L}(V,\mathcal{L}(V,W))\hookrightarrow \mathcal{L} (V\otimes V, W)
			\end{align*} 
			is required.
			This is known to hold true, consult Theorem 2.9 in \cite{Ryan}.
			In the following, for notational simplicity we drop the tensor symbol. 
		\end{remark}
		
		Let $0 \leq s \leq \tau \leq t \leq T$ be fixed. As argued in Section \ref{heuristics}, in order to introduce an infinite-dimensional rough integral we first need to define the following processes and investigate their algebraic and analytic properties. Recall that throughout this section $K$ and $E$ should be interpreted as placeholder which stand for $G$, respectively $DG$. Keeping Section \ref{heuristics} in mind, we begin analyzing $a$, $c$ and $\omega^{S}$. More precisely,
		
		\begin{align}\label{definition_omegaS}
		&\omega^S_{ts} \colon \aL(V,W) \to W,
		&\omega^S_{ts}(K) := \int\limits_{s}^{t}{S(t-r) K }d\omega_r.
		\end{align}
		\begin{align}\label{definition_a}
		&a_{ts} \colon \aL(W \otimes V,W) \times W \to W, 
		&a_{ts}(E,x) := \int\limits_{s}^{t} {S(t-r) E S(r-s) x} d\omega_r.
		\end{align}
		\begin{align}\label{definition_c}
		&c_{ts} \colon \aL(W \otimes V,W) \times \aL(V,W) \to W,
		&c_{ts}(E,K) := \int\limits_{s}^{t}S(t-r) E  K (\delta\omega)_{rs} d\omega_r.
		\end{align}
		\begin{remark}
			Note that some of the processes above exist even if $\omega$ is not smooth, as shown in the following deliberations. However, at the very first sight, it is not at all clear why for instance \eqref{definition_c} is well-defined. 
			\end{remark}
		Similar to \cite{GarridoLuSchmalfuss2} we consider a smooth approximating sequence $\left(\omega^n,\omegaan \right)\to \left(\omega,\omegaa \right) $ in 
		$C^\alpha\left(\left[0,T\right],V \right) \times C^{2\alpha} \left(\Delta_T,V \otimes V \right)$, prove that the previous processes exist for this approximation terms and finally pass to the limit. Therefore we analyze
		\begin{align}
				&\omega^{S,n}_{ts}(K) := \int\limits_{s}^{t}{S(t-r) K }d\omega^{n}_r\\
			&a^{n}_{ts}(E,x) := \int\limits_{s}^{t} {S(t-r) E S(r-s) x} d\omega^{n}_r\\
		&c^{n}_{ts}(E,K) := \int\limits_{s}^{t}{S(t-r) E  K \left(\delta\omega^{n} \right)}_{rs} d\omega^{n}_r.
		\end{align}
		
			In the following we establish algebraic and analytic properties which will be employed further on. We begin with the algebraic structure.
		\begin{lemma}\label{algebraic}
			The properties
			\begin{align}
			&(\hat\delta_{2} \omega^{S,n})_{t \tau s}(K) = 0 \label{algebraic_omegaS} \\
			&(\hat\delta_{2} a^{n})_{t \tau s}(E,x) = a^{n}_{t \tau} (E,(S(\tau-s)-\mbox{Id} )x)\label{algebraic_a}\\
			&(\hat\delta_{2} c^{n})_{t \tau s}(E,K) = \omega^{S,n}_{t \tau}(EK (\delta \omega)_{\tau s}) \label{algebraic_c}
			\end{align} 
			are satisfied.
		\end{lemma}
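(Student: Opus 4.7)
The plan is to verify each of the three identities by direct computation on the smooth approximations $\omega^n$, relying on two elementary facts: the semigroup law $S(t-r)=S(t-\tau)S(\tau-r)$ valid for $s\le r\le \tau\le t$, which allows the prefactor $S(t-\tau)$ arising in $(\hat\delta_2 z)_{t\tau s}=z_{ts}-z_{t\tau}-S(t-\tau)z_{\tau s}$ to be absorbed into the integrand; and interval additivity of the Riemann--Stieltjes integral, $\int_s^t=\int_s^\tau+\int_\tau^t$. Since $\omega^n$ is smooth, all integrals involved are classical, so no rough-path machinery is needed at this stage.

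For (\ref{algebraic_omegaS}) I unpack $(\hat\delta_2\omega^{S,n})_{t\tau s}(K)$, split the $\int_s^t$ contribution of $\omega^{S,n}_{ts}(K)$ as $\int_s^\tau+\int_\tau^t$, and rewrite the third term as $\int_s^\tau S(t-\tau)S(\tau-r)K\,d\omega^n_r=\int_s^\tau S(t-r)K\,d\omega^n_r$; the four remaining contributions cancel in pairs. For (\ref{algebraic_a}) the same manipulations leave only the integral $\int_\tau^t S(t-r)E\bigl[S(r-s)-S(r-\tau)\bigr]x\,d\omega^n_r$. Factoring $S(r-s)-S(r-\tau)=S(r-\tau)\bigl(S(\tau-s)-\mbox{Id}\bigr)$, again by the semigroup law, identifies this remainder precisely with $a^n_{t\tau}(E,(S(\tau-s)-\mbox{Id})x)$.

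For (\ref{algebraic_c}) the analogous procedure produces $\int_\tau^t S(t-r)EK\bigl[(\delta\omega^n)_{rs}-(\delta\omega^n)_{r\tau}\bigr]\,d\omega^n_r$. The bracket telescopes to the $r$-independent constant $\omega^n_\tau-\omega^n_s=(\delta\omega^n)_{\tau s}$, which can therefore be pulled out of the integral and absorbed into the operator argument of $\omega^{S,n}_{t\tau}$, yielding $\omega^{S,n}_{t\tau}(EK(\delta\omega^n)_{\tau s})$; at the approximation level this is the stated right-hand side. There is no substantive obstacle here: the three identities are purely formal consequences of the semigroup law together with interval additivity of the smooth Stieltjes integral. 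The only bookkeeping issue is to track the arguments of $S$ carefully when pulling $S(t-\tau)$ inside the integral and, for (\ref{algebraic_c}), to recognise that the difference of two increments of $\omega^n$ collapses to the intermediate increment $(\delta\omega^n)_{\tau s}$.
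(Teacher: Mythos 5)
Your proposal is correct and follows essentially the same route as the paper: absorb $S(t-\tau)$ into the third integrand via the semigroup law, apply interval additivity of the smooth Stieltjes integral so that the $\int_s^\tau$ pieces cancel, and then for (\ref{algebraic_a}) factor $S(r-s)-S(r-\tau)=S(r-\tau)(S(\tau-s)-\mbox{Id})$ while for (\ref{algebraic_c}) collapse $(\delta\omega^n)_{rs}-(\delta\omega^n)_{r\tau}$ to the constant $(\delta\omega^n)_{\tau s}$. You also correctly notice that the right-hand side of (\ref{algebraic_c}) should read $(\delta\omega^n)_{\tau s}$ at this approximation stage, which matches the computation the paper actually performs.
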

		\begin{proof}
			One can easily verify that
			\begin{align*}
			(\hat\delta_{2} \omega^{S,n})_{t \tau s}(K) = \int\limits_{s}^{t} {S(t-r)K d\omega^n_r} - \int\limits_{\tau}^{t} {S(t-r)K d\omega^n_r} - \int\limits_{s}^{\tau} {S(t-r)K d\omega^n_r} = 0.
			\end{align*}
			Furthermore,
			\begin{align*}
			(\hat\delta_{2} a^n)_{t \tau s}(E,x) 
			&= \int\limits_{s}^{t}{S(t-r) E S(r-s) x d\omega^n_r} - \int\limits_{\tau}^{t}{S(t-r) E S(r-\tau) x d\omega^n_r}\\ 
			&- \int\limits_{s}^{\tau}{S(t-r) E S(r-s) x d\omega^n_r} \\
			&=  \int\limits_{\tau}^{t}{S(t-r) E \left(S(r-s)-S(r-\tau)\right) x d\omega^n_r} \\
			&=  \int\limits_{\tau}^{t}{S(t-r) E S(r-\tau) \left(S(\tau-s)-id \right) x d\omega^n_r} \\
			&=  a^n_{t \tau} (E,\left(S(\tau-s)-\mbox{Id}\right) x).
			\end{align*}
			Finally,
			\begin{align*}
			(\hat\delta_{2} c^n)_{t \tau s}(E,K) 
			&=  \int\limits_{s}^{t} {S(t-r) EK (\delta \omega^n)_{rs} d\omega^n_r} - \int\limits_{\tau}^{t} {S(t-r) EK (\delta \omega^n)_{r\tau} d\omega^n_r} \\
			& -\int\limits_{s}^{\tau} {S(t-r) EK (\delta \omega^n)_{rs} d\omega^n_r} \\
			&=  \int\limits_{\tau}^{t} {S(t-r) EK (\delta \omega^n)_{\tau s} d\omega^n_r} \\
			&=  \omega^{S,n}_{t\tau}(EK(\delta \omega^n)_{\tau s}).
			\end{align*}
			\qed \end{proof}\\
		
The analytic estimates are contained in the next result. Throughout this section $c_{S}$ stands for a constant which exclusively depends on the semigroup. 
		\begin{lemma}\label{analytic} For the processes $\omega^{S,n}_{ts}$, $a^{n}_{ts}$ and $c^{n}_{ts}$
		the following estimates hold true:
				\begin{align}
				&\left|\omega^{S,n}_{ts}(K) \right| \leq c_S \ltn \omega^n \rtn_{\alpha} \left| K \right| \left(t-s \right)^\alpha \label{estimate_omegaS}\\
				&\left| a^{n}_{ts}(E,x) \right| \leq c_S \ltn \omega^n \rtn_{\alpha} \left| E \right| |x|_{W} \left(t-s \right)^{\alpha}, ~~\mbox{for } x\in W\label{estimate_a}\\
				&\left|a^n_{ts}(E,x) - \omega^{S,n}_{ts}(Ex) \right| \leq c_S \ltn \omega^{n} \rtn_{\alpha} \left| E \right| \left|x \right|_{D_\beta} (t-s)^{\alpha+\beta}, ~~~\mbox{for } x\in D_{\beta}\label{estimate_a2}\\
				&\left|c^n_{ts}(E,K) \right| \leq c_S \left(\ltn \omega^n \rtn_{\alpha}+\left\|\omegaan \right\|_{2\alpha} \right) \left|E \right| \left| K \right| \left(t-s \right)^{2\alpha}.\label{estimate_c}
				\end{align}
		
			\end{lemma}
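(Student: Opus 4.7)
Since each $\omega^n$ is a smooth path, all four expressions are classical Riemann integrals, and the unifying tool is integration by parts in $r$. The main analytical input is the smoothing bound $\|AS(t-r)\|_{\mathcal{L}(W)} \leq C(t-r)^{-1}$ from \eqref{hg1}, which, paired with the H\"older estimate $|\omega^n_r - \omega^n_t|_V \leq \ltn \omega^n \rtn_{\alpha} (t-r)^{\alpha}$, produces an integrable singularity $(t-r)^{\alpha-1}$. Throughout, the auxiliary primitive $v(r) := \omega^n_r - \omega^n_t$ satisfies $v(t)=0$, so the upper boundary term in any integration by parts vanishes automatically.

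\textbf{Estimates \eqref{estimate_omegaS} and \eqref{estimate_a2}.} For \eqref{estimate_omegaS}, integration by parts with $v(r) = \omega^n_r - \omega^n_t$ yields
\[
\omega^{S,n}_{ts}(K) = S(t-s)K(\omega^n_t-\omega^n_s) - \int_s^t AS(t-r) K (\omega^n_r - \omega^n_t)\, dr,
\]
and both summands are bounded by $C|K|\ltn\omega^n\rtn_\alpha (t-s)^\alpha$, the second via $\int_s^t (t-r)^{\alpha-1} dr = \alpha^{-1}(t-s)^\alpha$. For \eqref{estimate_a2}, the key observation is
\[
a^n_{ts}(E,x) - \omega^{S,n}_{ts}(Ex) = \int_s^t S(t-r) E [S(r-s) - \mbox{Id}] x \, d\omega^n_r,
\]
whose integrand also vanishes at $r=s$. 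Integration by parts with $v(r)=\omega^n_r - \omega^n_t$ therefore kills both boundary terms, and writing out $\frac{d}{dr}[S(t-r) E (S(r-s)-\mbox{Id}) x]$ produces two integrals which, using the refined bounds $|(S(r-s) - \mbox{Id}) x|_W \leq C(r-s)^\beta |x|_{D_\beta}$ and $|AS(r-s) x|_W \leq C(r-s)^{\beta-1}|x|_{D_\beta}$, both reduce to Beta-function integrals of order $(t-s)^{\alpha+\beta}$.

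\textbf{Estimates \eqref{estimate_a} and \eqref{estimate_c}.} For \eqref{estimate_a}, I would split $a^n_{ts}(E,x) = \omega^{S,n}_{ts}(Ex) + [a^n_{ts}(E,x) - \omega^{S,n}_{ts}(Ex)]$; the first summand is controlled by \eqref{estimate_omegaS} with $|Ex|_{\aL(V,W)} \leq |E||x|_W$, and for the remainder we split the integration interval at the midpoint $m=(s+t)/2$, using $S(t-r)=S(t-m)S(m-r)$ on $[s,m]$ to absorb the problematic $(r-s)^{-1}$ singularity of $AS(r-s)x$ (which is only of order $(r-s)^{-1}$ when $x \in W$) into a factor bounded by $(t-s)^{-1}$, after which the two regions each integrate to $(t-s)^\alpha$. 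For \eqref{estimate_c}, integration by parts with $v(r)=\omega^n_r-\omega^n_t$ again annihilates both boundary terms (the integrand carries $(\omega^n_r-\omega^n_s)$, vanishing at $r=s$). The derivative decomposes into an $AS(t-r)$ piece, bounded by $|E||K|\ltn\omega^n\rtn_\alpha^2 \int_s^t (t-r)^{\alpha-1}(r-s)^\alpha dr = O(\ltn\omega^n\rtn_\alpha^2 (t-s)^{2\alpha})$, and a piece involving $(\omega^n_r-\omega^n_s)\otimes d\omega^n_r$; in the latter one identifies the differential of the L\'evy area $\omegaan_{rs}$, and a second integration by parts transfers control onto $\|\omegaan\|_{2\alpha}$, producing the desired $(t-s)^{2\alpha}$ bound.

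\textbf{Main obstacle.} The most delicate step is \eqref{estimate_a} for $x\in W$ only: direct integration by parts produces the derivative $-AS(t-r)ES(r-s)x + S(t-r)EAS(r-s)x$, whose second summand carries the non-integrable factor $(r-s)^{-1}$ when bounded pointwise. Unlike in the $x\in D_\beta$ case of \eqref{estimate_a2}, there is no gain from a fractional power, so one must exploit either the midpoint splitting described above or a cancellation between the two summands of the derivative. A secondary new ingredient appears in \eqref{estimate_c}, where the L\'evy area $\omegaan$ enters for the first time and must be recognised during the second integration by parts; this is the place where the intrinsically rough (as opposed to Young) nature of the integral becomes visible.
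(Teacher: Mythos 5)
Your integration-by-parts strategy is sound for \eqref{estimate_omegaS} and \eqref{estimate_a2}, and parallels the paper's argument. For \eqref{estimate_c}, however, your description of the decomposition contains an error that would break the proof: after the first integration by parts with primitive $\omega^n_r-\omega^n_t$, the residual term carrying the noise differential is $\dot\omega^n_r\otimes(\omega^n_r-\omega^n_t)\,dr = \partial_r\omegaan_{tr}\,dr$, so the L\'evy area that appears is $\omegaan_{tr}$, \emph{not} $\omegaan_{rs}$. This distinction is not cosmetic. Had one written $\omegaan_{rs}$, the second integration by parts would produce the residual $\int_s^t AS(t-r)EK\,\omegaan_{rs}\,dr$, whose integrand is of order $(t-r)^{-1}(r-s)^{2\alpha}$ and diverges near $r=t$; with $\omegaan_{tr}$, which vanishes at $r=t$ at rate $(t-r)^{2\alpha}$, the integrand is $(t-r)^{2\alpha-1}$ and gives the stated bound. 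The paper arrives at this same $\omegaan_{tr}$ by first splitting $(\delta\omega^n)_{rs}=(\delta\omega^n)_{ts}-(\delta\omega^n)_{tr}$ and integrating by parts only the second piece.

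The genuine gap is \eqref{estimate_a} for general $x\in W$, which you correctly identify as the delicate case. Your proposed remedy — factoring $S(t-r)=S(t-m)S(m-r)$ on $[s,m]$ to "absorb the $(r-s)^{-1}$ singularity of $AS(r-s)x$ into a factor bounded by $(t-s)^{-1}$" — cannot work as stated: the operator $A$ lives inside $E(\,\cdot\,,\,\cdot\,)$ attached to $S(r-s)x$, it does not commute with $E$, and there is no mechanism to transfer it onto the outer semigroup factor. The paper's resolution is algebraic rather than analytic: starting from $a^n_{ts}(E,x)=\omega^{S,n}_{ts}(Ex)+\int_s^t\omega^{S,n}_{tr}(EAS(r-s)x)\,dr$, it invokes $(\hat\delta_2\omega^{S,n})_{trs}=0$ from Lemma \ref{algebraic} to replace $\omega^{S,n}_{tr}=\omega^{S,n}_{ts}-S(t-r)\omega^{S,n}_{rs}$. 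Since $\omega^{S,n}_{ts}$ is a bounded linear map independent of $r$, the $r$-integration passes inside and $\int_s^t AS(r-s)x\,dr=(S(t-s)-\mbox{Id})x$ is computed \emph{before} any norm is taken, so the $(r-s)^{-1}$ singularity is never met; this piece combines with $\omega^{S,n}_{ts}(Ex)$ to give $\omega^{S,n}_{ts}(ES(t-s)x)$, controlled by \eqref{estimate_omegaS}. In the remaining piece the factor $\omega^{S,n}_{rs}$ vanishes at $r=s$ at rate $(r-s)^\alpha$, which turns the singularity into the integrable $(r-s)^{\alpha-1}$. A midpoint-splitting variant of the proof is in fact possible, but only if on $[s,m]$ you integrate by parts against the anti-derivative $\omega^n_r-\omega^n_s$ (whose vanishing at $r=s$ tames $(r-s)^{-1}$) and on $[m,t]$ against $\omega^n_r-\omega^n_t$; the semigroup refactorisation you describe plays no role in that argument.
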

											
		\begin{proof}
			Using the integration by parts formula, see Theorem 3.5 in \cite{Pazy}, leads to 
			\begin{align*}
			\omega^{S,n}_{ts}(K) &= \int\limits_{s}^{t}{S(t-r)K}d\omega^{n}_r 
			= S(t-s) K (\delta \omega^n)_{ts} -A \int\limits_{s}^{t}{S(t-r)K(\delta \omega^n)_{tr}} dr\\
			a^n_{ts}(E,x) &= \int\limits_{s}^{t}{S(t-r)ES(r-s)x}d\omega^n_r
			= -\int\limits_{s}^{t}{\partial_r \omega^{S,n}_{tr}\left(ES(r-s)x\right) }dr \\
			&= \omega^{S,n}_{ts}(Ex) + \int\limits_{s}^{t}{\omega^{S,n}_{tr}\left(EAS(r-s)x\right)} dr \\
			c^n_{ts}(E,K) &= \int\limits_{s}^{t}{S(t-r) E K \left(\delta\omega^n\right)}_{rs} d\omega^n_r = \int\limits_{s}^{t} S(t-r) EK(\delta\omega^{n})_{ts}d\omega^{n}_{r} -\int\limits_{s}^{t}S(t-r)EK (\delta\omega^{n})_{tr}d\omega^{n}_{r} \\
			& = \omega^{S,n}_{ts}(EK(\delta\omega^n)_{ts}) - \int\limits_{s}^{t} S(t-r)EK~ d\omega^{(2),n}_{tr} \\
			&= \omega^{S,n}_{ts} (EK \left(\delta \omega^n \right)_{ts}) 
			- S(t-s) EK \omegaan_{ts}
			- \int\limits_{s}^{t} A S(t-r) EK \omegaan_{tr} dr.
			\end{align*}
			For a similar construction, see~\cite[Section 6.1]{DeyaGubinelliTindel}. Based on these identities we easily derive the analytic estimates as follows.\\
			
			A standard computation immediately entails
			\begin{align*}
			\left|\omega^{S,n}_{ts}(K) \right| 
			&\leq \left|S(t-s) K (\delta \omega^n)_{ts}\right| + \left|\int\limits_{s}^{t}{AS(t-r)K(\delta \omega^n)_{tr}} dr \right| \\
			&\leq c_S \left|K \right| \ltn \omega^n \rtn_{\alpha} \left(t-s \right)^{\alpha}.
			\end{align*}
			Recalling \eqref{algebraic_omegaS} we infer that
			\begin{align*}
			\left|a^n_{ts}(E,x) \right| 
			= &\left|\omega^{S,n}_{ts}(Ex) +\int\limits_{s}^{t}{\omega^{S,n}_{tr}\left(E A S(r-s) x \right)} dr\right| \\
			= &\left|\omega^{S,n}_{ts}(Ex) + \int\limits_{s}^{t}{\omega^{S,n}_{ts}\left(E A S(r-s) x \right)} dr 
			- \int\limits_{s}^{t}{S(t-r)\omega^{S,n}_{rs}\left(E A S(r-s) x \right)} dr \right| \\
			\leq &\left|\omega^{S,n}_{ts}(ES(t-s)x) \right| + \left| \int\limits_{s}^{t}{S(t-r)\omega^{S,n}_{rs}\left(E A S(r-s) x \right)} dr \right|\\
			\leq &c_S \left|E \right| \left|x \right|_{W} \ltn \omega^n\rtn_{\alpha} \left(t-s \right)^{\alpha} 
			+ c_S \left|E \right| \left|x \right|_{W} \ltn \omega^n\rtn_{\alpha} \int\limits_{s}^{t}{(r-s)^{\alpha-1}} dr \\
			\leq &c_S \left|E \right| \left|x \right|_{W} \ltn \omega^n\rtn_{\alpha} \left(t-s \right)^\alpha.
			\end{align*}
			For our aims it is also necessary to derive estimates for $x \in D_\beta$ with $0< \beta\leq 1$. In this situation we have
			\begin{align*}
			\left|a^n_{ts}(E,x)-\omega^{S,n}_{ts}(Ex) \right| 
			&= \left|\int\limits_{s}^{t}{\omega^{S,n}_{tr}\left(E A S(r-s) x \right)} dr\right| \\
			&\leq c_S \left|E \right| \left|x \right|_{D_\beta} \ltn \omega^n\rtn_{\alpha} 
			\int\limits_{s}^{t}{\left(t-r\right)^\alpha \left(r-s \right)^{\beta-1}} dr \\
			&= c_S \left|E \right| \left|x \right|_{D_\beta} \ltn \omega^n\rtn_{\alpha} \left(t-s \right)^{\alpha+\beta}.
			\end{align*}
			Furthermore, we obtain
			\begin{align*}
			\left|c^n_{ts}(E,K) \right| 
			\leq &\left|\omega^{S,n}_{ts} (EK \left(\delta \omega^n \right)_{ts})\right| 
			+ \left|S(t-s) EK \omegaan_{ts} \right|
			+\left| \int\limits_{s}^{t} A S(t-r) EK \omegaan_{tr} dr \right| \\
			\leq &c_S \left|E \right| \left|K \right| \ltn \omega^n\rtn_{\alpha}^2 \left(t-s \right)^{2\alpha}
			+ c_S \left|E \right| \left|K \right| \left\|\omegaan \right\|_{2\alpha} \left(t-s\right)^{2\alpha} \\
			+ &c_S \left|E \right| \left|K \right| \left\|\omegaan \right\|_{2\alpha} \int\limits_{s}^{t}{\left(t-r \right)^{2\alpha-1}}dr \\
			\leq & c_S \left|E \right| \left|K \right| \left(\ltn \omega^n\rtn_{\alpha}^2 + \left\|\omegaan\right\|_{2\alpha}\right) \left(t-s \right)^{2\alpha}.
			\end{align*}
		\qed \\ \end{proof}

	
		Consequently, keeping Lemma \ref{analytic} in mind we are justified to define the supporting processes via
		\begin{align}
		\label{definition_omegaS2}
		\omega^{S}_{ts}(K) &:= S(t-s) K (\delta \omega)_{ts} -A \int\limits_{s}^{t}{S(t-r)K(\delta \omega)_{tr}} dr \\
		\label{definition_a2}
		a_{ts}(E,x) &:= \omega^{S}_{ts}(Ex) + \int\limits_{s}^{t}{\omega^{S}_{tr}\left(E A S(r-s) x \right)} dr \\
		\label{definition_c2}
		c_{ts}(E,K)&:= \omega^{S}_{ts} (EK \left(\delta \omega \right)_{ts}) 
		- S(t-s) EK \omegaa_{ts}- \int\limits_{s}^{t} A S(t-r) EK \omegaa_{tr} dr.
		\end{align}

	\begin{lemma}\label{lemma:cont:dependence1} We have that
	\begin{align}
	&\omega^{S,n} \to \omega^S \text{ in } C^{\alpha}\left(\left[0,T\right],\aL(\aL(V,W),W) \right) \\
	&a^n \to a \text{ in } C^{\alpha}\left(\left[0,T\right],\aL(\aL(W \otimes V,W) \times W,W) \right) \label{convergence_a}\\
	&c^n \to c \text{ in } C^{2\alpha}\left(\left[0,T\right],\aL(\aL(W \otimes V,W) \times \aL(V,W),W) \right).
	\end{align}
\end{lemma}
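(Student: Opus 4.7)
The plan is to exploit the linearity (or bilinearity, for $c$) of the integration-by-parts representations~\eqref{definition_omegaS2}--\eqref{definition_c2}, so that differences of the approximating processes inherit exactly the same analytic estimates as in Lemma~\ref{analytic}, but with $\omega$ replaced by $\omega^n-\omega$ and $\omegaa$ by $\omegaan-\omegaa$. Combined with the hypothesis that $(\omega^n,\omegaan)\to(\omega,\omegaa)$ in the $d_{\alpha,T}$-metric, this yields each convergence.

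For the first convergence I would simply observe that since $\omega^{S,n}$ is linear in $\omega^n$,
\begin{align*}
\omega^{S,n}_{ts}(K)-\omega^S_{ts}(K)
= S(t-s)K(\delta(\omega^n-\omega))_{ts}
- A\int_{s}^{t} S(t-r)K(\delta(\omega^n-\omega))_{tr}\,dr,
\end{align*}
so the very same estimate used to prove~\eqref{estimate_omegaS} gives
$|\omega^{S,n}_{ts}(K)-\omega^S_{ts}(K)|\leq c_S|K|\,\ltn\omega^n-\omega\rtn_\alpha(t-s)^\alpha$,
which implies convergence in $C^\alpha([0,T],\mathcal{L}(\mathcal{L}(V,W),W))$. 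For $a^n\to a$, the same linearity argument gives
\begin{align*}
a^n_{ts}(E,x)-a_{ts}(E,x)
=(\omega^{S,n}_{ts}-\omega^S_{ts})(Ex)
+\int_{s}^{t}(\omega^{S,n}_{tr}-\omega^S_{tr})(EAS(r-s)x)\,dr,
\end{align*}
and the proof of~\eqref{estimate_a} carries over verbatim with $\omega^n$ replaced by $\omega^n-\omega$, yielding the desired convergence.

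The main (and only mildly delicate) step is $c^n\to c$, because $c^n$ depends bilinearly on $(\omega^n,\omegaan)$. Here I would split
\begin{align*}
c^n_{ts}(E,K)-c_{ts}(E,K)
=&\,\omega^{S,n}_{ts}(EK(\delta(\omega^n-\omega))_{ts})
+(\omega^{S,n}_{ts}-\omega^S_{ts})(EK(\delta\omega)_{ts})\\
&-S(t-s)EK(\omegaan_{ts}-\omegaa_{ts})
-\int_{s}^{t}AS(t-r)EK(\omegaan_{tr}-\omegaa_{tr})\,dr,
\end{align*}
and then estimate each piece exactly as in the proof of~\eqref{estimate_c}, using the uniform bound $\sup_n\ltn\omega^n\rtn_\alpha<\infty$ (which holds since $\omega^n\to\omega$ in $C^\alpha$) for the first term. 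Each summand is then bounded by a constant multiple of $d_{\alpha,T}((\omega^n,\omegaan),(\omega,\omegaa))\cdot|E|\,|K|\,(t-s)^{2\alpha}$, which proves convergence in $C^{2\alpha}$.

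The only potential obstacle is keeping track of the quadratic dependence in the $c$-term: one has to add and subtract $\omega^{S,n}_{ts}(EK(\delta\omega)_{ts})$ so that each surviving factor is either a fixed object with finite H\"older norm or a difference that tends to zero in the rough-path metric. Once this bookkeeping is in place, the estimates of Lemma~\ref{analytic} apply termwise and the result follows.
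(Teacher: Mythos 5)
Your argument is correct and follows essentially the same route as the paper: the paper likewise exploits the linearity of the integration-by-parts formulas \eqref{definition_omegaS2}--\eqref{definition_a2} to transfer the estimates of Lemma \ref{analytic} to the differences $\omega^{S}-\omega^{S,n}$ and $a-a^n$, and handles the bilinear $c$-term via an add-and-subtract decomposition into two pieces each involving one factor of $\omega-\omega^n$ or $\omega^{(2)}-\omega^{(2),n}$. Your explicit invocation of the uniform bound $\sup_n\ltn\omega^n\rtn_\alpha<\infty$ for the surviving factor is a welcome precision that the paper leaves implicit.
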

\begin{proof}

	Similarly to the proof of Lemma \ref{analytic} we obtain
	\begin{align*}
	\left|\left(\omega^S-\omega^{S,n}\right)_{ts}(K) \right|
	&= \left| S(t-s) K (\delta (\omega-\omega^n))_{ts} -A \int\limits_{s}^{t}{S(t-r)K(\delta (\omega-\omega^n))_{tr}} dr\right| \\
	&\leq c_S \ltn \omega-\omega^n \rtn_{\alpha} \left|K \right| \left(t-s \right)^{\alpha},
	\end{align*}
	which shows that $\omega^{S,n} \to \omega^S$ in $C^{\alpha}\left(\left[0,T\right], \aL(\aL(V,W),W) \right)$. \\[1ex]
	The same deliberations as in the proof of (\ref{estimate_a2}) lead to	
	\begin{align*}
	\left|a_{ts}(E,x) - a^n_{ts}(E,x) \right|
	&\leq \left|\left(\omega^{S}-\omega^{S,n}\right)_{ts}(S(t-s)Ex) \right| \\
	&+ \left| \int\limits_{s}^{t}{S(t-r)\left(\omega^{S}-\omega^{S,n}\right)_{rs}\left(E A S(r-s) x \right)} dr \right|\\
	&\leq c_S \left|E \right| \left|x \right| \ltn \omega-\omega^n\rtn_{\alpha} \left(t-s \right)^\alpha.
	\end{align*}
	The last term yields
	\begin{align*}
	\left|c_{ts}(E,K) - c^n_{ts}(E,K) \right|
	\leq &\left|\omega^{S}_{ts} (EK \left(\delta \omega \right)_{ts}) - \omega_{ts}^{S,n} (EK \left(\delta \omega^n \right)_{ts})  \right|\\
	&+ \left| S(v-u) EK \omegaa_{ts} -S(v-u) EK \omegaan_{ts} \right| \\
	&+ \left|\int\limits_{s}^{t} A S(t-r) EK \omegaa_{tr} dr - \int\limits_{s}^{t} A S(t-r) EK \omegaan_{tr} dr\right|\\
	\leq & c_S \left(\ltn \omega \rtn_\alpha \ltn \omega-\omega^n\rtn_\alpha + \left\| \omegaa-\omegaan\right\|_{2\alpha} \right) \left|E \right| \left|K \right| \left(t-s \right)^{2\alpha}.
	\end{align*}
	
	\qed \end{proof}

		\begin{remark}
			Note that the algebraic and analytic properties proved in Lemmas \ref{algebraic} and \ref{analytic} remain valid.
		\end{remark}	

Furthermore, we observe.

\begin{corollary}\label{corollary_a_sum}
For an arbitrary partition $\aP=\aP(s,t)$ the following identity holds true
			\begin{align*}
			a_{ts}(E,x) = \sum\limits_{\left[u,v\right]\in \aP} S(t-v) a_{vu}(E,S(u-s)x).
			\end{align*}
\end{corollary}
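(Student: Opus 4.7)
The plan is to derive the identity directly from the algebraic relation \eqref{algebraic_a} established in Lemma~\ref{algebraic} (which, by the remark after Lemma~\ref{lemma:cont:dependence1}, persists after passing to the limit), combined with an elementary induction on the number of subintervals. Note that the definition~\eqref{definition_a2} shows immediately that $a_{ts}(E,x)$ is linear in $x$ (and bilinear in $(E,x)$), since $\omega^S_{ts}$ is linear in its argument.

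First, I would unfold the definition of $\hat\delta_2$ in the identity
\begin{align*}
(\hat\delta_2 a)_{t\tau s}(E,x) = a_{t\tau}(E,(S(\tau-s)-\mathrm{Id})x),
\end{align*}
which reads
\begin{align*}
a_{ts}(E,x) - a_{t\tau}(E,x) - S(t-\tau)\,a_{\tau s}(E,x) = a_{t\tau}(E,(S(\tau-s)-\mathrm{Id})x).
\end{align*}
Using linearity of $a$ in the second argument, this rearranges to the two-interval splitting formula
\begin{align*}
a_{ts}(E,x) = a_{t\tau}(E, S(\tau-s)x) + S(t-\tau)\,a_{\tau s}(E,x),
\end{align*}
which is precisely the claim for the partition $\{s,\tau,t\}$.

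The general statement then follows by induction on the number $N$ of intervals of $\aP = \{s=u_0<u_1<\cdots<u_N=t\}$. The case $N=1$ is trivial. For the inductive step, I would apply the two-interval identity at $\tau=u_1$, obtaining
\begin{align*}
a_{ts}(E,x) = a_{t,u_1}(E, S(u_1-s)x) + S(t-u_1)\,a_{u_1,s}(E,x),
\end{align*}
and then invoke the inductive hypothesis for the partition $\{u_1,\ldots,u_N\}$ of $[u_1,t]$, with initial data $S(u_1-s)x$. Using the semigroup property $S(u_k-u_1)S(u_1-s) = S(u_k-s)$ to collapse the composition in each summand, and recognizing the remaining term $S(t-u_1)a_{u_1,s}(E,x)$ as the $k=0$ contribution, yields the full sum $\sum_{[u,v]\in\aP} S(t-v)\,a_{vu}(E,S(u-s)x)$.

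There is no real obstacle here: the result is a purely algebraic consequence of Chen-type relation~\eqref{algebraic_a} and the bilinearity of $a$. The only point requiring a small amount of care is bookkeeping in the induction — in particular, ensuring that the semigroup cocycle identity correctly reassembles $S(u_k-u_1)S(u_1-s)$ into $S(u_k-s)$ so that the summands across both pieces merge into a single sum indexed by $\aP$.
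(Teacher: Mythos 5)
Your proposal is correct and follows essentially the same route as the paper: derive the two-interval splitting $a_{ts}(E,x)=a_{t\tau}(E,S(\tau-s)x)+S(t-\tau)a_{\tau s}(E,x)$ from the algebraic relation \eqref{algebraic_a} together with bilinearity, then iterate over the partition. You merely make explicit (via induction and the semigroup cocycle identity) what the paper compresses into ``iterating this identity proves the claim.''
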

\begin{proof}
			Using  \eqref{algebraic_a} and the bilinearity of $a$ we notice that
			\begin{align*}
			a_{ts}(E,x) 
			&= a_{t\tau}(E,x)+ S(t-\tau) a_{\tau s}(E,x) +  a_{t \tau} (E,(S(\tau-s)-\mbox{Id})x) \\
			&= a_{t\tau}(E,S(\tau-s)x)+ S(t-\tau) a_{\tau s}(E,x).
			\end{align*}
			Iterating this identity for any given partition $\aP(s,t)$ proves the claim.
\qed \end{proof}

	
		\begin{remark}
			Alternatively, these processes can also be defined using Theorem \ref{lemma_sewing}. For a better comprehension we illustrate this technique for $a$ and emphasize the fact that both approaches are equivalent.
		\end{remark}
		Heuristically, similar to Section \ref{heuristics}, we notice that for a smooth function $\omega$  we can approximate $a_{ts}$ as follows:
		\begin{align*}
		a_{ts}(E,x) := \int\limits_{s}^{t} {S(t-r) E S(r-s) x} d\omega_r
		&= \sum\limits_{\left[u,v\right]\in \aP} S(t-v) \int\limits_{u}^{v} {S(v-r) E S(r-s) x} d\omega_r \\
		&\approx \sum\limits_{\left[u,v\right]\in \aP} S(t-v) \int\limits_{u}^{v} {S(v-r) E S(u-s) x} d\omega_r \\
		&= \sum\limits_{\left[u,v\right]\in \aP} S(t-v)\, \omega^S_{vu}(E S(u-s) x).
		\end{align*}
		Keeping this in mind, the deliberations made in Section \ref{sectsl} lead to the following result.
		\begin{lemma}
			Let $0 \leq s \leq T$. For all $ s \leq \tau \leq t\leq T$ we define 
			\begin{align}
			\Xi^{(a),s}_{t \tau}(E,x):= \omega_{t\tau}^S(E S(\tau-s)x).
			\end{align}
			Then we have \begin{align}a_{ts}= \left(\hat\delta\mathcal{I}\Xi^{(a),s}\right)_{t s}.
			\end{align}
		\end{lemma}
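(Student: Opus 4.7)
The plan is to apply the Sewing Lemma (Theorem \ref{lemma_sewing}) to $\Xi^{(a),s}$ on $[s,T]$ and then to identify the resulting primitive with the explicit map $t \mapsto a_{ts}$ by invoking the uniqueness argument of that theorem (which rests on Lemma \ref{lemma_uniqueness}). A shift of the time variable by $s$ via Corollary \ref{corollary_sewing_shift} places us in the $[0,T-s]$ setup of Theorem \ref{lemma_sewing} verbatim.

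For the hypotheses of Theorem \ref{lemma_sewing}, the bound $|\Xi^{(a),s}_{t\tau}| \leq C|E||x|_W(t-\tau)^\alpha$ is immediate from \eqref{estimate_omegaS} and the uniform boundedness of $(S(t))_{t\in[0,T]}$. For the second-order increment I would use Chen's identity $\hat\delta_2\omega^S\equiv 0$ from \eqref{algebraic_omegaS} to cancel the common $u$-dependence, obtaining
\begin{align*}
(\hat\delta_2 \Xi^{(a),s})_{vmu} = \omega^S_{vm}\bigl(E\bigl[S(u-s)-S(m-s)\bigr]x\bigr) = -\omega^S_{vm}\bigl(E(S(m-u)-\mathrm{Id})S(u-s)x\bigr),
\end{align*}
and then bound it via \eqref{estimate_omegaS}, \eqref{hg2} and \eqref{hg1} as
\begin{align*}
|(\hat\delta_2 \Xi^{(a),s})_{vmu}| \leq C|E||x|_W (u-s)^{-\beta}(v-u)^{\alpha+\beta},
\end{align*}
which matches \eqref{assumption_deltaXi1} with $\rho=\alpha+\beta$ and singularity at $s$ instead of $0$.

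Next, I would verify that the candidate $f_t := a_{ts}$ meets the characterizing estimates of $\mathcal{I}\Xi^{(a),s}$. Clearly $f_s = 0$, and the algebraic identity \eqref{algebraic_a} combined with the bilinearity of $a$ gives
\begin{align*}
(\hat\delta f)_{t\tau} = a_{ts} - S(t-\tau) a_{\tau s} = a_{t\tau}(E,x) + a_{t\tau}(E,(S(\tau-s)-\mathrm{Id})x) = a_{t\tau}(E,S(\tau-s)x).
\end{align*}
Feeding this into \eqref{estimate_a2} with the smoothed argument $S(\tau-s)x\in D_\beta$ and using $|S(\tau-s)x|_{D_\beta}\leq C(\tau-s)^{-\beta}|x|_W$ from \eqref{hg1} yields
\begin{align*}
|(\hat\delta f)_{t\tau} - \Xi^{(a),s}_{t\tau}| = |a_{t\tau}(E,S(\tau-s)x) - \omega^S_{t\tau}(ES(\tau-s)x)| \leq C|E||x|_W(\tau-s)^{-\beta}(t-\tau)^{\alpha+\beta},
\end{align*}
while \eqref{estimate_a} supplies $|(\hat\delta f)_{t\tau}|\leq C(t-\tau)^\alpha$. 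These are precisely properties \eqref{property_deltaIXiest} and \eqref{property_IXiest1}, so the uniqueness argument opening the proof of Theorem \ref{lemma_sewing} forces $f\equiv\mathcal{I}\Xi^{(a),s}$, whence $a_{ts} = (\hat\delta\mathcal{I}\Xi^{(a),s})_{ts}$.

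The chief technical point is the computation and estimation of $\hat\delta_2 \Xi^{(a),s}$: one must arrange Chen's relation so that the increment $S(m-u)-\mathrm{Id}$ acts on a vector that has already been regularized into $D_\beta$ by $S(u-s)$, which is what lets \eqref{hg1}–\eqref{hg2} together absorb the regularity gap and produce the expected $(u-s)^{-\beta}(v-u)^{\alpha+\beta}$ form. Everything else is a direct reuse of the identities in Lemma \ref{algebraic} and the bounds in Lemma \ref{analytic}.
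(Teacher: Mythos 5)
Your argument is correct, but it takes a genuinely different route from the paper's. The paper first establishes (via Theorem \ref{lemma_sewing} and Corollary \ref{corollary_sewing_integral}) that $(\hat\delta\mathcal{I}\Xi^{(a),s})_{ts}$ is the limit of the Riemann sums $\sum_{[u,v]\in\aP(s,t)}S(t-v)\omega^S_{vu}(ES(u-s)x)$, then invokes Corollary \ref{corollary_a_sum} to write $a_{ts}$ exactly (not merely in the limit) as $\sum_{[u,v]\in\aP(s,t)}S(t-v)a_{vu}(E,S(u-s)x)$, and finally shows the difference of the two sums vanishes by estimating $\sum|a_{vu}(E,S(u-s)x)-\omega^S_{vu}(ES(u-s)x)|$ along dyadic partitions using \eqref{estimate_a2}. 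You instead set $f_t:=a_{ts}$, compute $(\hat\delta f)_{t\tau}=a_{t\tau}(E,S(\tau-s)x)$ from \eqref{algebraic_a} and bilinearity, and then check directly that $f$ satisfies the three characterizing properties of $\mathcal{I}\Xi^{(a),s}$ — namely $f_s=0$, the $\alpha$-H\"older bound \eqref{property_deltaIXiest} via \eqref{estimate_a}, and the remainder bound \eqref{property_IXiest1} via \eqref{estimate_a2} combined with \eqref{hg1} — so that the uniqueness part of Theorem \ref{lemma_sewing} (really Lemma \ref{lemma_uniqueness}) forces $f\equiv\mathcal{I}\Xi^{(a),s}$. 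This bypasses Corollary \ref{corollary_a_sum} and the Riemann-sum comparison entirely, and is arguably cleaner. One small point you should make explicit, which is also only implicit in the paper: the exponent appearing in the bound for $\hat\delta_2\Xi^{(a),s}$ and in the remainder estimate is a free parameter from \eqref{hg1}--\eqref{hg2}; it must be chosen as some $\beta'\in(1-\alpha,1)$ so that $\rho=\alpha+\beta'>1$ and Theorem \ref{lemma_sewing} / Lemma \ref{lemma_uniqueness} apply, since the globally fixed $\beta$ only satisfies $\alpha+2\beta>1$, not $\alpha+\beta>1$. With that clarification your argument is complete and valid.
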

		\begin{proof}
			In order to apply Theorem \ref{lemma_sewing} we have to analyze the term $\Xi ^{(a),s}_{vu}$. Therefore we estimate
			\begin{align*}
			\left| \Xi^{(a),s}_{vu}(E,x) \right| = \left| \omega_{vu}^S(E S(u-s)x) \right| \leq c_S \left| E \right| \left| x \right| \ltn \omega\rtn_{\alpha} \left(v-u \right)^\alpha
			\end{align*}
			and
			\begin{align*}
			\left|(\hat\delta_2 \Xi^{(a),s})_{vmu}\right| 
			= &\left|\omega_{vu}^S(E S(u-s)x) - \omega_{vm}^S(E S(m-s)x) - S(v-m) \omega_{mu}^S(E S(u-s)x)\right| \\
			= &\left|\omega_{vm}^S(E \left(S(u-s) - S(m-s) \right)x) \right|\\
			\leq & c_S \left| E \right| \left| x \right|_{D_{\beta}} \ltn \omega\rtn_\alpha \left(u-s \right)^{-\beta} \left(v-u \right)^{\alpha+\beta}. 
			\end{align*}
			Hence, Theorem \ref{lemma_sewing} yields the existence of $\mathcal{I}\Xi^{(a),s}$ and by Corollary \ref{corollary_sewing_integral} 
			\begin{align*}
			&(\hat\delta\mathcal{I}\Xi^{(a),s})_{t\tau} = \lim\limits_{\left|\aP\right| \to 0} \sum\limits_{\left[u,v\right]\in \aP( \tau, t)} S(t-v) \Xi^{(a),s}_{vu},
			\end{align*}
			which further implies
			\begin{align*}
			&(\hat\delta\mathcal{I}\Xi^{(a),s})_{ts}(E,x) = \lim\limits_{\left|\aP\right| \to 0} \sum\limits_{\left[u,v\right]\in \aP(s, t)} S(t-v) \omega_{vu}^S(E S(u-s)x).
			\end{align*}
		We	define 
			\begin{align*}
			\widetilde{a}_{ts} := (\hat\delta\mathcal{I}\Xi^{(a),s})_{ts}
			\end{align*}
			and show that $a=\widetilde a$. \\
By Corollary \ref{corollary_a_sum} we know that
\begin{align*}
a_{ts}(E,x) = \sum\limits_{\left[u,v\right]\in \aP(s,t)} S(t-v) a_{vu}(E,S(u-s)x).
\end{align*}

			Particularly, this also holds for the limit $\left|\aP \right| \to 0$. \\
			
			Regarding this, in order to prove the statement, i.e. that $a=\widetilde{a}$, we have to estimate the difference between $a_{vu}$ and $\omega^{S}_{vu}$. To this aim, we
			consider now a dyadic partition $\aP_n$ and have that
			\begin{align*}
			&\left|\sum\limits_{\left[u,v\right]\in \aP_n} \left( a_{vu}(E,S(u-s)x)- \omega^S_{vu}(ES(u-s)x) \right) \right|\\
			\leq &\sum\limits_{\left[u,v\right]\in \aP_n} \left| a_{vu}(E,S(u-s)x)- \omega^S_{vu}(ES(u-s)x) \right|.
			\end{align*}
			We apply \eqref{estimate_a2} for the first term with $\beta=0$ and for the other terms with $1-\alpha<\beta<1$, and obtain
			\begin{align*}
			&\sum\limits_{\left[u,v\right]\in \aP_n} \left| a_{vu}(E,S(u-s)x)- \omega^S_{vu}(ES(u-s)x) \right| \\
			\leq & c_s \left| E \right| \ltn \omega \rtn_\alpha \left| x \right| \frac{\left(t-s\right)^\alpha}{2^{n\alpha}} 
			+ \sum\limits_{\substack{\left[u,v\right]\in \aP_n \\ u \neq s}} 
			c_s \left| E \right| \ltn \omega \rtn_\alpha \left| x \right| 
			\left( u-s\right)^{-\beta} \frac{\left(t-s \right)^{\alpha+\beta}}{2^{n(\alpha+\beta)}} \\
			\leq & c_s \left| E \right| \ltn \omega \rtn_\alpha \left| x \right| 
			\left(\frac{\left(t-s\right)^\alpha}{2^{n\alpha}} 
			+ \frac{\left(t-s \right)^{\alpha+\beta-1}}{2^{n(\alpha+\beta-1)}} \int\limits_{s}^{t} \left(q-s \right)^{-\beta} dq \right)\\
			\leq & c_s \left| E \right| \ltn \omega \rtn_\alpha \left| x \right| \left(t-s \right)^{\alpha} 
			\left(2^{-n\alpha} + 2^{-n(\alpha+\beta-1)} \right) 
			\stackrel{n \to \infty}{\longrightarrow} 0. 
			\end{align*}
	This proves the statement.
	\qed \\ \end{proof}

		In order to complete the construction of the supporting processes, recall Section \ref{heuristics} we focus now on
		\begin{align}\label{definition_b}
		\begin{split}
		&b_{ts} \colon \aL(W \otimes V,W) \times \aL(V,W) \to W,\\
		&b_{ts}(E,K) := \int\limits_{s}^{t}{S(t-r) E \int\limits_{s}^{r}{S(r-q)K}d\omega_q} d\omega_r.
		\end{split}
		\end{align}
		\begin{remark}
			To our best knowledge it is not possible to define this process via integration by parts, see Remark 4.3 in \cite{DeyaGubinelliTindel}. We use Theorem \ref{lemma_sewing} to show that at least under some additional regularity assumption on $K$ (specified in Lemma \ref{reg_f}) it is possible to define $b(E,K)$.	
		\end{remark}
		
	Inspired by the definition of $a$ we follow the heuristic intuition given in Section \ref{heuristics}. We saw in~\eqref{hb}, that for a smooth $\omega$ we have	
	\begin{align*}
	b_{ts}(E,K)	\approx  \sum\limits_{[u,v]\in \aP} S(t-v) \left[\omega^S_{vu} (E \omega^S_{us}(K)) + c_{vu}(E,K) \right].
		\end{align*} 
		At the first sight the approximation above appears quite arbitrarily but we will rigorously show that this gives us the right approach to define $b$. \\
		
		As previously argued, we consider again a smooth approximating sequence $\left(\omega^n,\omegaan \right)$ of $ \left(\omega,\omegaa \right)$ and define 
		\begin{align}\label{bn}
		b^n_{ts}(E,K) := \int\limits_{s}^{t}{S(t-r) E \int\limits_{s}^{r}{S(r-q)K}d\omega^n_q} d\omega^n_r.
		\end{align}
		Furthermore, for all $0\leq s \leq \tau \leq t\leq T$ we introduce $$ \Xi^{(b),s}_{t \tau}(E,K):= \omega^S_{t\tau}(E \omega^S_{\tau s}(K)) + c_{t \tau}(E,K).$$
		Here the additional regularity assumption on $K$ plays a crucial role. This translates into the restriction on the diffusion coefficient $G$, recall assumption (G) in Section \ref{preliminaries}.
		
		\begin{lemma}\label{reg_f}
Let $K \in \aL(V,D_\beta)$ with $\alpha+2\beta>1$ and $\alpha>\beta$.
			Then there exists $$b_{ts}:= \left(\hat\delta\mathcal{I}\Xi^{(b),s}\right)_{t s}.$$ Moreover the following statements are valid
			\begin{enumerate}
				\item [(i)] analytic property:
				\begin{align}
				\left|b_{ts}(E,K) \right| \leq c_S \left| E \right| \left| K \right|_{D_{\beta}} \left(\ltn \omega\rtn^2_\alpha + \left\| \omegaa\right\|_{2\alpha} \right) \left(t-s \right)^{2\alpha}. \label{estimate_b}
				\end{align}
				\item [(ii)] continuous dependence on the paths of the noise: 
				\begin{align}
				b^n \to b \text{ in } C^{2\alpha}\left(\left[0,T\right],\aL(\aL(W \otimes V,W) \times \aL(V,D_\beta),W) \right).\label{convergence_b}
				\end{align}
				\item [(iii)] algebraic property:
				\begin{align}
				(\hat\delta_{2} b)_{t \tau s}(E,K) = a_{t\tau} (E,\omega^S_{\tau s}(K)).\label{algebraic_b}
				\end{align}
			\end{enumerate}
		\end{lemma}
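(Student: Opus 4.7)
The plan is to construct $b$ via the Sewing Lemma (Theorem~\ref{lemma_sewing}) applied to the two-parameter process
\[
\Xi^{(b),s}_{t\tau}(E,K):=\omega^S_{t\tau}(E\omega^S_{\tau s}(K))+c_{t\tau}(E,K),\qquad s\le\tau\le t\le T,
\]
which is the natural Riemann-sum germ suggested by equation~\eqref{hb}. The first task is to verify the two Sewing hypotheses: a pointwise bound on $\Xi^{(b),s}_{vu}$ and a weighted bound on the Chen-type discrepancy $(\hat\delta_2\Xi^{(b),s})_{vmu}$.

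For the analytic bound on $\Xi^{(b),s}_{vu}$ I would combine~\eqref{estimate_omegaS} with $|\omega^S_{us}(K)|_W\le c_S|K|\,\|\omega\|_\alpha(u-s)^\alpha$ for the first summand, and use~\eqref{estimate_c} for the $c$-summand. For the algebraic discrepancy, Lemma~\ref{algebraic} yields, after invoking $(\hat\delta_2\omega^S)(EX)=0$, the Chen identity $\omega^S_{ms}(K)=\omega^S_{mu}(K)+S(m-u)\omega^S_{us}(K)$, and~\eqref{algebraic_c},
\begin{align*}
(\hat\delta_2\Xi^{(b),s})_{vmu}
=\omega^S_{vm}\bigl(E\bigl[K(\delta\omega)_{mu}-\omega^S_{mu}(K)-(S(m-u)-\mathrm{Id})\omega^S_{us}(K)\bigr]\bigr).
\end{align*}
The two bracketed differences can then be controlled using the integration-by-parts representation~\eqref{definition_omegaS2} of $\omega^S$, the $D_\beta$-regularity of $K$, and the smoothing estimate~\eqref{hg2}; each term gains an extra factor $(m-u)^\beta$, so that combined with the outer $\omega^S_{vm}$ one obtains a bound of the form $C|E||K|_{D_\beta}\|\omega\|_\alpha^2[(u-s)^\alpha(v-u)^{\alpha+\beta}+(v-u)^{2\alpha+\beta}]$. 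The joint assumption $\alpha+2\beta>1$, $\alpha>\beta$ is precisely what makes this bound admissible in the Sewing framework with $\rho>1$, once the weight parameter is tuned to absorb the $(u-s)^\alpha$ factor.

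Once $\mathcal{I}\Xi^{(b),s}$ is produced, we set $b_{ts}:=(\hat\delta\mathcal{I}\Xi^{(b),s})_{ts}$. The analytic bound~(i) then follows by noticing that at the boundary $u=s$ one has $\omega^S_{ss}(K)=0$, so that $\Xi^{(b),s}_{ts}=c_{ts}(E,K)$ already scales like $(t-s)^{2\alpha}$; combining this with the refined Sewing estimate~\eqref{property_IXiest1} applied to $(\hat\delta\mathcal{I}\Xi^{(b),s})_{ts}-\Xi^{(b),s}_{ts}$ delivers~\eqref{estimate_b}. For the continuous dependence~(ii), Remark~\ref{remark_sewing_linearity} provides continuity of $\mathcal{I}$ on the germ space $\hat{C}^{\alpha,\rho,\beta}$, so that applying it to $\Xi^{(b),s,n}-\Xi^{(b),s}$ (where $\Xi^{(b),s,n}$ is built from the smooth approximants) reduces the convergence $b^n\to b$ to the convergences $\omega^{S,n}\to\omega^S$ and $c^n\to c$ already proved in Lemma~\ref{lemma:cont:dependence1}.

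For the algebraic property~(iii) the cleanest route is to verify it first at the smooth level. Splitting the double integral defining $b^n_{vu}$ across the intermediate time $\tau$ gives
\[
b^n_{vu}-b^n_{v\tau}-S(v-\tau)b^n_{\tau u}
=\int_\tau^v S(v-r)E\Bigl[\textstyle\int_u^\tau S(r-q)K\,d\omega^n_q\Bigr]d\omega^n_r,
\]
and factoring the inner integral as $S(r-\tau)\omega^{S,n}_{\tau u}(K)$ identifies the right-hand side with $a^n_{v\tau}(E,\omega^{S,n}_{\tau u}(K))$; passing to the limit through~(ii) and~\eqref{convergence_a} closes the argument. The principal obstacle lies in the first step: the factor $(u-s)^\alpha$ coming from $|\omega^S_{us}(K)|_W$ blocks the naive $(v-u)^{2\alpha}$ scaling of $\Xi^{(b),s}_{vu}$ for interior $u$, and the Sewing parameters must be chosen delicately—exploiting in a crucial way that this factor vanishes precisely at the boundary $u=s$ where~\eqref{property_IXiest1} is evaluated to extract the desired $(t-s)^{2\alpha}$ estimate.
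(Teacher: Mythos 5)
Your overall architecture matches the paper's: same germ $\Xi^{(b),s}_{t\tau}(E,K) = \omega^S_{t\tau}(E\omega^S_{\tau s}(K))+c_{t\tau}(E,K)$, the same Chen-type manipulation of the discrepancy, and the same use of the Sewing Lemma for existence and of the linearity of $\aI$ for continuous dependence. For~(ii), note that linearity only gives you convergence of the Sewing-constructed integral $\widetilde b^n:=(\hat\delta\aI\Xi^{(b),n,s})_{ts}$ to $b$; the paper still has to identify $b^n=\widetilde b^n$ by comparing $\hat\delta_2 b^n$ with $\hat\delta_2\widetilde b^n$ and invoking Lemma~\ref{lemma_hatdelta_uniqueness} (exploiting that smooth paths have $\alpha>1/2$) — this step is missing from your sketch, though it is minor.

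The substantive gap is in the discrepancy estimate. You assert
\[
|(\hat\delta_2\Xi^{(b),s})_{vmu}|\lesssim|E|\,|K|_{D_\beta}\ltn\omega\rtn^2_\alpha\left[(u-s)^\alpha(v-u)^{\alpha+\beta}+(v-u)^{2\alpha+\beta}\right],
\]
claiming each bracketed term "gains an extra factor $(m-u)^\beta$." The first summand is harmless since $2\alpha+\beta>\alpha+2\beta>1$. The second is not: in the admissible regime $\tfrac13<\beta<\alpha\le\tfrac12$ one typically has $\alpha+\beta\le 1$ (e.g.\ $\alpha=\tfrac12$, $\beta$ just above $\tfrac13$), and the prefactor $(u-s)^\alpha$ \emph{vanishes} near the boundary $u\to s$ rather than blowing up, so it cannot be re-interpreted as a Sewing weight $u^{-\beta'}$ (which must be singular, not decaying). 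Bounding $(u-s)^\alpha\le T^\alpha$ leaves $(v-u)^{\alpha+\beta}$ with $\alpha+\beta$ possibly $\le 1$, violating the $\rho>1$ requirement of Theorem~\ref{lemma_sewing}; "tuning the weight parameter" cannot repair this because $u^{-\beta'}$ dominates only near $u=0$.

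The fix is to measure $\omega^S_{us}(K)$ in $D_{2\beta}$, not $D_\beta$: one checks $|\omega^S_{us}(K)|_{D_{2\beta}}\le c_S|K|_{D_\beta}\ltn\omega\rtn_\alpha(u-s)^{\alpha-\beta}$ (convergence of the integral in the integration-by-parts identity requires exactly $\alpha>\beta$), whence $|(S(m-u)-\mbox{Id})\omega^S_{us}(K)|\le c_S|K|_{D_\beta}\ltn\omega\rtn_\alpha(u-s)^{\alpha-\beta}(m-u)^{2\beta}$. Paired with the outer $\omega^S_{vm}$, the second summand becomes $\le c_S T^{\alpha-\beta}|E|\,|K|_{D_\beta}\ltn\omega\rtn^2_\alpha(v-u)^{\alpha+2\beta}$, and $\rho=\alpha+2\beta>1$ is now precisely the hypothesis of the lemma. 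With this sharper smoothing estimate in place, the rest of your argument goes through as you describe.
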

		\begin{proof}
			As seen before, in order to apply Theorem \ref{lemma_sewing}, we have to analyze $\Xi^{(b),s}_{t\tau}$. Obviously,
			\begin{align*}
			\left| \Xi^{(b),s}_{t \tau}(E,K) \right| 
			\leq &\left|\omega^S_{t\tau}(E \omega^S_{\tau s}(K))\right| + \left|c_{t \tau}(E,K)\right| .
			\end{align*}
			Applying \eqref{estimate_omegaS} and \eqref{estimate_c} we have
			\begin{align*}
			&\left|\omega^S_{t\tau}(E \omega^S_{\tau s}(K))\right| + \left|c_{t \tau}(E,K)\right| \\
			&\leq c_S \ltn \omega \rtn_\alpha \left|E \omega^S_{\tau s}(K) \right| \left(t-\tau \right)^\alpha 
			+ c_S \left(\ltn \omega \rtn_{\alpha}+\left\|\omegaa \right\|_{2\alpha} \right) \left|E \right| \left| K \right| \left(t-\tau \right)^{2\alpha} \\
			&\leq c_S \ltn \omega \rtn^2_\alpha  \left|E \right| \left| K \right| \left(\tau-s \right)^{\alpha} \left(t-\tau \right)^\alpha 
			+ c_S \left(\ltn \omega \rtn_{\alpha}+\left\|\omegaa \right\|_{2\alpha} \right) \left|E \right| \left| K \right| \left(t-\tau \right)^{2\alpha}.
			\end{align*}
			Furthermore, we compute
			\begin{align*}
			\hat\delta_2\Xi^{(b),s}_{vmu}(E,K) 
			= & (\hat\delta_2\omega^S)_{vmu}(E \omega^S_{u s}(K)) + \omega^S_{vm} (E \left(\omega^S_{us}(K)-\omega^S_{ms}(K) \right)) 
			+ (\hat\delta_2 c)_{vmu}(E,K).
			\end{align*}
			By applying \eqref{algebraic_omegaS}, \eqref{algebraic_c} and \eqref{estimate_omegaS} we obtain
			\begin{align*}
			\hat\delta_2\Xi^{(b),s}_{vmu}(E,K) 
			= & \omega^S_{vm} (E \left(\omega^S_{us}(K)-\omega^S_{ms}(K) \right)) + \omega^S_{vm}(EK (\delta \omega)_{mu}) \\
			= & \omega^S_{vm} (E \left(K (\delta \omega)_{mu} - \omega^S_{ms}(K) + \omega^S_{us}(K) \right)).
			\end{align*}
			This further entails
			\begin{align}
			\left|\hat\delta_2\Xi^{(b),s}_{vmu}(E,K) \right| 
			\leq & \left| E \right| \ltn \omega \rtn_\alpha \left(v-m\right)^{\alpha} 
			\left|K (\delta \omega)_{mu} - \omega^S_{ms}(K) + \omega^S_{us}(K) \right|.
			\end{align}
			Consequently, we need appropriate estimates for the last term. We apply \eqref{algebraic_omegaS} and infer that
			\begin{align*}
			\left|K (\delta \omega)_{mu} - \omega^S_{ms}(K) + \omega^S_{us}(K) \right|
			&= \left|K (\delta \omega)_{mu} - \omega^S_{mu}(K) - \left(S(m-u)-\mbox{Id} \right) \omega^S_{us}(K)\right| \nonumber\\
			&\leq  \left|K (\delta \omega)_{mu} - \omega^S_{mu}(K) \right| + \left| \left(S(m-u)-\mbox{Id} \right) \omega^S_{us}(K)\right|.
			\end{align*}
			For the next steps the additional assumption $K:V \to D_\beta$ is required. Keeping this in mind
			and using \eqref{definition_omegaS2} we estimate the first term of the previous inequality as follows:
			\begin{align*}
			\left|K (\delta \omega)_{mu} - \omega^S_{mu}(K) \right|
			&\leq \left|\left(S(m-u)-\mbox{Id} \right) K (\delta \omega)_{mu} \right|
			+ \left|\int\limits_{u}^{m}{A S(m-r)K (\delta \omega)_{mr} } dr \right| \\
			& \leq c_{S} \left| K \right|_{D_\beta} \ltn \omega \rtn_\alpha \left(m-u \right)^{\alpha+\beta}.
			\end{align*}

			On the other hand, we have
			\begin{align*}
			\left| \left(S(m-u)-\mbox{Id} \right) \omega^S_{us}(K)\right| \leq c_S \left| \omega^S_{us}(K)\right|_{D_{2\beta}} \left(m-u \right)^{2\beta}.
			\end{align*}
			Applying again \eqref{definition_omegaS2} we derive 
			\begin{align*}
			\left| \omega^S_{us}(K) \right|_{D_{2\beta}} 
			\leq &\left|S(u-s) K (\delta\omega)_{us} \right|_{D_{2\beta}} 
			+ \left|A \int\limits_{s}^{u} S(u-r) K (\delta\omega)_{ur} dr \right|_{D_{2\beta}} \\
			\leq & c_S \left| K\right|_{D_\beta} \ltn \omega\rtn_{\alpha} \left(u-s \right)^{\alpha-\beta} 
			+ c_S \left| K\right|_{D_\beta} \ltn \omega\rtn_{\alpha} \int\limits_{s}^{u} \left(u-r \right)^{\alpha-\beta-1} dr \\
			\leq & c_S \left| K\right|_{D_\beta} \ltn \omega\rtn_{\alpha} \left(u-s \right)^{\alpha-\beta}.
			\end{align*}
			This finally leads to 
			\begin{align*}
			\left| \left(S(m-u)-\mbox{Id} \right) \omega^S_{us}(K)\right| \leq c_S \left| K \right|_{D_\beta} \ltn \omega \rtn_\alpha \left(u-s \right)^{\alpha-\beta} \left(m-u \right)^{2\beta}. 
			\end{align*}
			Putting all these together 
			we get
			\begin{align*}
			\left|K (\delta \omega)_{mu}- \omega^S_{ms}(K) + \omega^S_{us}(K) \right|
			\leq c_S T^{\alpha-\beta} \left| K\right|_{D_\beta} \ltn \omega\rtn_{\alpha}  \left(m-u\right)^{2\beta}.
			\end{align*}
			Consequently, regarding (5.26), 
			we obtain
			\begin{align*}
			\left|\hat\delta_{2} \Xi^{(b),s}_{vmu}(E,K) \right| 
			\leq c_S T^{\alpha-\beta} \left| E\right| \left| K\right|_{D_\beta} \ltn \omega\rtn^{2}_{\alpha}  \left(v-u \right)^{\alpha+2\beta}. 
			\end{align*}
			
			Theorem \ref{lemma_sewing} ensures the existence of $\aI\Xi^{(b),s}$ such that for all $s\leq \tau \leq t\leq T$ we have
			\begin{align*}
			(\hat\delta \aI\Xi^{(b),s})_{t\tau}(E,K) = \lim\limits_{\left|\aP\right| \to 0} \sum\limits_{\left[u,v\right]\in \aP(\tau, t)} S(t-v)\left[\omega^S_{vu}(E \omega^S_{u s}(K)) + c_{vu}(E,K) \right]
			\end{align*}
			and
			\begin{align*}
			\left|(\hat\delta \aI\Xi^{(b),s})_{t\tau}(E,K)\right| \leq c_{S}\left| E \right| \left| K \right|_{D_{\beta}} 
			\left(\ltn \omega\rtn^2_\alpha + \left\| \omegaa\right\|_{2\alpha} \right) 
			\left(t-\tau \right)^\alpha \left(\left(\tau-s \right)^\alpha + \left(t-\tau \right)^\alpha \right).
			\end{align*}
			In particular setting $\tau=s$ we can define $b_{ts}:=(\hat\delta \aI\Xi^{(b),s})_{ts}$ and infer from the previous estimate that
			\begin{align}
			\left|b_{ts}(E,K) \right| \leq c_S \left| E \right| \left| K \right|_{D_{\beta}} \left(\ltn \omega\rtn^2_\alpha + \left\| \omegaa\right\|_{2\alpha} \right) \left(t-s \right)^{2\alpha},	
			\end{align}
			which precisely gives us $(i)$.\\
			
			In order to prove $(iii)$ we compute as before 
			\begin{align*}
			(\hat\delta_2 b)_{t\tau s} (E,K)
			&= \lim\limits_{\left|\aP\right| \to 0} \sum\limits_{\left[u,v\right]\in \aP(s, t)} S(t-v)\left[\omega^S_{vu}(E \omega^S_{u s}(K)) + c_{vu}(E,K) \right] \\
			&- \lim\limits_{\left|\aP\right| \to 0} \sum\limits_{\left[u,v\right]\in \aP(\tau, t)} S(t-v)\left[\omega^S_{vu}(E \omega^S_{u \tau}(K)) + c_{vu}(E,K) \right]\\
			&- \lim\limits_{\left|\aP\right| \to 0} \sum\limits_{\left[u,v\right]\in \aP(s, \tau)} S(t-v)\left[\omega^S_{vu}(E \omega^S_{u s}(K)) + c_{vu}(E,K) \right] \\
			&= \lim\limits_{\left|\aP \right| \to 0} \sum\limits_{\left[u,v\right]\in \aP(\tau, t)} S(t-v)\omega^S_{vu}(E \left(\omega^S_{u s}-\omega^S_{u \tau}\right)(K)) \\
			&=  \lim\limits_{\left|\aP\right| \to 0} \sum\limits_{\left[u,v\right]\in \aP(\tau, t)} S(t-v)\omega^S_{vu}(E \omega^S_{\tau s}(K)) \\
			&=  a_{t\tau}(E,\omega^S_{\tau s}(K)).
			\end{align*}
			It only remains to show that assertion $(ii)$ holds true.  Regarding that $\omega^n$ and $\omegaan$ are smooth approximation terms, we are allowed to choose $\alpha>1/2$.\\
			
			We define $$\Xi^{(b),n,s}_{t \tau}(E,K):= \omega^{S,n}_{t\tau}(E \omega^{S,n}_{\tau s}(K)) + c^n_{t \tau}(E,K).$$ 
			Similar computations entail the existence of
			$$\widetilde{b}^{n}_{ts}(E,K):=
			(\hat\delta \aI\Xi^{(b),n,s})_{ts}(E,K).
			$$
			Moreover, using the same deliberations as above we obtain the analytic estimate
			\begin{align*}
			&\left|\widetilde{b}^n_{ts}(E,K) \right| \leq c_S \left| E \right| \left| K \right|_{D_{\beta}} \left(\ltn \omega^n\rtn^2_\alpha + \left\| \omegaan\right\|_{2\alpha} \right) \left(t-s \right)^{2\alpha}
			\end{align*}
			together with the algebraic structure
			\begin{align*}
			&(\hat\delta_2 \widetilde{b}^n)_{t\tau s} (E,K)=a^n_{t\tau}(E,\omega^{S,n}_{\tau s}(K)).
			\end{align*} 
			A straightforward computation for $b^n$ gives us 
			\begin{align*}
			(\hat\delta_2 b^n)_{t\tau s} (E,K) 
			= &\int\limits_{s}^{t} S(t-r) E \int\limits_{s}^{r} S(r-q) K d\omega^n_q d\omega^n_r \\
			&-\int\limits_{\tau}^{t} S(t-r) E \int\limits_{\tau}^{r} S(r-q) K d\omega^n_q d\omega^n_r \\
			&-\int\limits_{s}^{\tau} S(t-r) E \int\limits_{s}^{r} S(r-q) K d\omega^n_q d\omega^n_r \\
			= & \int\limits_{\tau}^{t} S(t-r) E \left(\int\limits_{s}^{r} S(r-q) K d\omega^n_q - \int\limits_{\tau}^{r} S(r-q) K d\omega^n_q \right)d\omega^n_r \\
			= & \int\limits_{\tau}^{t} S(t-r) E S(r-\tau) \int\limits_{s}^{\tau} S(\tau-q) K d\omega^n_q d\omega^n_r \\
			= & a^n_{t\tau}(E,\omega^{S,n}_{\tau s}(K)).
			\end{align*}
						Consequently, we obtain that $\hat\delta_2 (b^n-\widetilde{b}^n) \equiv 0$. Hence, for all $E,K$ by Lemma \ref{lemma_preliminaries_hatdelta} there exists $\kappa \in C(\left[0,T\right],W)$ such that
			\begin{align*}
			\kappa_0=0 \quad \mbox{ and } \quad (\hat\delta \kappa)_{ts} = (b^n-\widetilde{b}^n)_{ts}(E,K).
			\end{align*}
			We have 
			\begin{align*}
			\left|(\hat\delta \kappa)_{ts} \right| 
			\leq  \left|b^n_{ts}(E,K)\right|+ \left|\widetilde{b}^n_{ts}(E,K) \right| 
			\leq C(S,\omega^n,\omegaan,E,K) \left(t-s \right)^{2\alpha}.
			\end{align*}
			Since we assumed $\alpha>\frac{1}{2}$ Lemma \ref{lemma_hatdelta_uniqueness} yields $\kappa \equiv 0$ which implies $b^n=\widetilde{b}^n$. We know by Remark \ref{remark_sewing_linearity} that $\widetilde{b}^n$ converges to $b$ which proves the assertion.
		\qed \\ \end{proof}
		
		\begin{remark}\label{remark_supportingprocesses_shift}
			For our latter purpose it is important to consider integrals with respect to an appropriate time-shift of $\omega$. This is defined as usually by
			\begin{align}\label{shift5}
			\theta_\tau \omega_{t}: = \omega_{t+\tau}-\omega_{\tau},
			\end{align}
			for $\tau>0$.
			
		By a slight abuse of notation we introduce
		\begin{align*}
		\theta_\tau \omega^S_{ts}(K) := \int\limits_{s}^{t}{S(t-r) K }d\theta_\tau\omega_r
		\end{align*}
		and $\theta_\tau a$, $\theta_\tau b$ and $\theta_\tau c$ analogously. Since all supporting processes can be approximated by smooth functions it becomes clear that we have
\begin{align*}
\theta_\tau \omega^S_{ts} = \omega^S_{t+\tau, s+\tau}, \quad
\theta_\tau a_{ts} = a_{t+\tau, s+\tau},\quad
\theta_\tau b_{ts} = b_{t+\tau, s+\tau},\quad
\theta_\tau c_{ts} = c_{t+\tau, s+\tau}.
\end{align*}
Note that this is consistent with \eqref{shift:process}.
		\end{remark}

		\section{The fixed-point argument}\label{sectfp}
	Throughout this section we impose $\frac{1}{3}<\beta < \alpha\leq\frac{1}{2}$ such that $\alpha+2\beta>1$. Recall that all necessary assumptions on the coefficients and on the noise were stated in Section \ref{preliminaries}. \\
	
We now derive the existence of a solution for (\ref{eq1}) which is given by a pair $(y,z)$, as argued in Section \ref{heuristics}. Here	
 $(y_t)_{t \in \left[0,T\right]}$ stands for a $W$-valued path and $(z_{ts})_{(t,s)\in \Delta_T}$,
	$ z_{ts} \in \aL(\aL(W \otimes V,W),W)
	$ denotes the area term.\\

		
			Therefore, we are justified to introduce the Banach space
		\begin{align*}
		X_{\omega,T}:=\Big\{(y,z):  ~~& y \in C^{\beta,\beta}\left([0,T],W\right),\\
		 &z \in C^{\alpha}\left(\Delta_T,\aL(\aL(W \otimes V,W),W) \right) 
		\cap C^{\alpha+\beta,\beta}\left(\Delta_T,\aL(\aL(W \otimes V,W),W) \right),\\
		&	(\hat\delta_2 z)_{t \tau s} = \omega^S_{t\tau}(\cdot (\delta y)_{\tau s})
		 \Big\},
		\end{align*}
		endowed with norm
		\begin{align*}
		\left\|(y,z) \right\|_{X} := \left\|y \right\|_\infty + \ltn y \rtn_{\beta,\beta} 
		+ \left\| z \right\|_{\alpha} + \left\| z \right\|_{\alpha+\beta,\beta}.
		\end{align*}
		
\begin{remark}
	Note that the norm given above is equivalent to
	\begin{align*}
	\left\|y\right\|_{\infty} 
	+ \ltn y \rtn_{\beta,\beta}
	+ \sup\limits_{0 \leq s <  t \leq T} \frac{\left|z_{ts} \right|}{(t-s)^\alpha}
	+\sup\limits_{0 < s <  t \leq T} s^\beta \frac{\left|z_{ts} \right|}{(t-s)^{\alpha+\beta}},
	\end{align*} which essentially simplifies the computation. By a slight abuse of notation we use the same symbols.
\end{remark}		
		Using the same notations as in Section \ref{heuristics},
		we consider the map
		\begin{align*}
		\aM_T\colon X_{\omega,T} \to X_{\omega,T} \quad \quad \aM_T(y,z) = (\widetilde{y},\widetilde{z}),
		\end{align*}
		where
		\begin{align*}
		\yt_t &= S(t) \xi + \aI\Xi^{(y)}(y,z)_t \\
		\yb_t&= \yt_t - S(t) \xi = \aI\Xi^{(y)}(y,z)_t.
		\end{align*}
		Furthermore, for $E \in \aL(W \otimes V,W)$
		the second component of the solution is constituted by
		\begin{align*}
		\zt_{ts}(E) &= \left(\hat\delta \aI\Xi^{(z)}(y,\yt) \right)_{ts}(E) - \omega^S_{ts}(E \yt_s), \\
		&= \left(\hat\delta \aI\Xi^{(z)}(y,\yb) \right)_{ts}(E) - \omega^S_{ts}(E \yb_s) +a_{ts}(E,S(s)\xi) - \omega^S_{ts}(E S(s) \xi),\\
		\zb_{ts}(E) &= \left(\hat\delta \aI\Xi^{(z)}(y,\yb) \right)_{ts}(E) - \omega^S_{ts}(E \yb_s).
		\end{align*}
		
		Regarding this we define for $(u,v)\in \Delta_{T}$
		\begin{align*}
		\Xi^{(y)}_{vu}=\Xi^{(y)}(y,z)_{vu}
		&= \omega^S_{vu}(G(y_u)) + z_{vu}(DG(y_u)), \\
		\Xi^{(z)}_{vu}(E) = \Xi^{(z)}(y,\yb)_{vu}(E) 
		&= b_{vu}(E,G(y_u)) + a_{vu}(E,\yb_u).
		\end{align*}
		In order to show that $\mathcal{M}_{T}$ maps $X_{\omega, T}$ into itself and is a contraction we have to derive suitable a-priori estimates. We proceed step by step and split these results into several Lemmas.
		\begin{remark}
		Note that the universal constant $C$ occurring in the estimates below depends on $\ltn \omega\rtn _{\alpha}$,
		$\left\| \omega^{(2)}\right\|_{2\alpha}$, $\alpha$, $\beta$, $S(\cdot)$, $G$ uniformly with respect to $T$. We stress that this is independent of $\xi$.	
		\end{remark}
		 
		\begin{lemma}[Estimates of the $y$-integral]\label{lemma_y}
			For a pair $(y,z) \in X_{\omega,T}$ the following estimates are valid:
			\begin{align}
			\left|(\hat\delta \yb)_{ts} \right| &\leq C \left(1+\left\|(y,z) \right\|_{X}^2 \right) \left(t-s \right)^{\alpha},
			\label{estimate_y_hatdelta} \\
			\left| \yb_s \right|_{D_\beta} & \leq C \left(1+\left\|(y,z) \right\|_{X}^2 \right) s^{\alpha-\beta},
			\label{estimate_y_beta} \\
			\left\| \yb \right\|_{\beta,\beta} &\leq C \left(1+\left\|(y,z) \right\|_{X}^2 \right) T^{\alpha},
			\label{estimate_y_norm} \\
			\left| (\hat\delta \yb)_{ts} - \omega^S_{ts}(G(y_s)) \right| &\leq C \left(1+\left\|(y,z) \right\|_{X}^2 \right) s^{-\beta} \left(t-s \right)^{\alpha+\beta} \label{estimate_y_help1}.
			\end{align}
		\end{lemma}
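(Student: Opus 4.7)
The plan is to apply the Sewing Lemma (Theorem~\ref{lemma_sewing}) together with Corollaries~\ref{corollary_sewing_IXiestimate}, \ref{corollary_sewing_epsestimate} and \ref{corollary_sewing_norm} to the increment $\Xi^{(y)}_{vu} = \omega^S_{vu}(G(y_u)) + z_{vu}(DG(y_u))$. First I would verify the pointwise bound
\begin{align*}
|\Xi^{(y)}_{vu}| \leq c_S\,\ltn\omega\rtn_\alpha |G(y_u)|(v-u)^\alpha + \|z\|_\alpha |DG(y_u)|(v-u)^\alpha \leq C\bigl(1+\|(y,z)\|_X\bigr)(v-u)^\alpha,
\end{align*}
using the Lipschitz property of $G$, the boundedness of $DG$, and~\eqref{estimate_omegaS}. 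This furnishes the first Sewing hypothesis with $c_1 \leq C(1+\|(y,z)\|_X)$.

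Next I would compute $\hat\delta_2\Xi^{(y)}$. Using $\hat\delta_2\omega^S \equiv 0$ from Lemma~\ref{algebraic} and the algebraic constraint $(\hat\delta_2 z)_{vmu}(E) = \omega^S_{vm}(E(\delta y)_{mu})$ built into $X_{\omega,T}$, the expansion in~\eqref{deltay} collapses to
\begin{align*}
(\hat\delta_2\Xi^{(y)})_{vmu} = \omega^S_{vm}\bigl(G(y_u)-G(y_m)+DG(y_u)(\delta y)_{mu}\bigr) + z_{vm}\bigl(DG(y_u)-DG(y_m)\bigr).
\end{align*}
Second-order Taylor for $G$ gives $|G(y_u)-G(y_m)+DG(y_u)(\delta y)_{mu}| \leq C|(\delta y)_{mu}|^2$, and the mean-value inequality yields $|DG(y_u)-DG(y_m)| \leq C|(\delta y)_{mu}|$. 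Combining with $|(\delta y)_{mu}| \leq \ltn y\rtn_{\beta,\beta} u^{-\beta}(m-u)^\beta$, with~\eqref{estimate_omegaS}, and with the two bounds $|z_{vm}| \leq \|z\|_\alpha (v-m)^\alpha$ and $|z_{vm}| \leq \|z\|_{\alpha+\beta,\beta} m^{-\beta}(v-m)^{\alpha+\beta}$, I would derive simultaneously
\begin{align*}
|(\hat\delta_2\Xi^{(y)})_{vmu}| &\leq C\bigl(1+\|(y,z)\|_X^2\bigr)\,u^{-2\beta}(v-u)^{\alpha+2\beta}, \\
|(\hat\delta_2\Xi^{(y)})_{vmu}| &\leq C\bigl(1+\|(y,z)\|_X^2\bigr)\,u^{-\beta}(v-u)^{\alpha+\beta},
\end{align*}
the first by estimating $|(\delta y)_{mu}|^2$ quadratically in the H\"older norm, the second by using $|(\delta y)_{mu}|^2 \leq 2\|y\|_\infty|(\delta y)_{mu}|$.

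Since $\alpha+2\beta>1$, the first bound supplies the Sewing hypothesis with $\rho=\alpha+2\beta$, $\beta_{\text{sew}} = 2\beta$. Its conclusion~\eqref{property_deltaIXiest} immediately yields~\eqref{estimate_y_hatdelta} and also provides the existence and uniqueness of $\yb=\aI\Xi^{(y)}$ with $\yb_0=0$. Corollary~\ref{corollary_sewing_epsestimate} applied with $\eps=\beta$ and evaluated at $s=0$ (so that $\yb_t = (\hat\delta\yb)_{t0}$) yields~\eqref{estimate_y_beta}; Corollary~\ref{corollary_sewing_norm} with $\gamma=\beta$ yields~\eqref{estimate_y_norm}.

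The main obstacle lies in~\eqref{estimate_y_help1}: the direct output of Theorem~\ref{lemma_sewing} only provides a $s^{-2\beta}(t-s)^{\alpha+2\beta}$ remainder, which cannot be dominated by $s^{-\beta}(t-s)^{\alpha+\beta}$ uniformly on $\Delta_T$. To bridge this gap I would invoke Corollary~\ref{corollary_sewing_IXiestimate} with the sharper second bound above ($\beta'=\beta$, $\rho'=\alpha+\beta$, which is admissible since $\rho'-\beta' = \alpha = \rho-\beta_{\text{sew}}$), obtaining $|(\hat\delta\yb)_{ts}-\Xi^{(y)}_{ts}| \leq C(1+\|(y,z)\|_X^2)\,s^{-\beta}(t-s)^{\alpha+\beta}$; then~\eqref{estimate_y_help1} follows by the triangle inequality once one absorbs the leftover term $|z_{ts}(DG(y_s))| \leq C\|z\|_{\alpha+\beta,\beta}\,s^{-\beta}(t-s)^{\alpha+\beta}$.
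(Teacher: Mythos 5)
Your proposal is correct and follows essentially the same route as the paper's proof: the pointwise bound on $\Xi^{(y)}$, the two complementary estimates of $\hat\delta_2\Xi^{(y)}$ (one quadratic in $\ltn y\rtn_{\beta,\beta}$ giving weight $u^{-2\beta}(v-u)^{\alpha+2\beta}$, one trading a H\"older factor for $\|y\|_\infty$ giving $u^{-\beta}(v-u)^{\alpha+\beta}$), then Theorem~\ref{lemma_sewing}, Corollaries~\ref{corollary_sewing_epsestimate} and~\ref{corollary_sewing_norm} for~\eqref{estimate_y_hatdelta}--\eqref{estimate_y_norm}, and Corollary~\ref{corollary_sewing_IXiestimate} with $(\beta',\rho')=(\beta,\alpha+\beta)$ plus the triangle inequality absorbing $|z_{ts}(DG(y_s))|$ for~\eqref{estimate_y_help1}. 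The only cosmetic difference is that you also mention the $\|z\|_\alpha$-bound on $z_{vm}$ as available, which the paper only uses in the $\Xi^{(y)}_{vu}$ estimate, not in the $\hat\delta_2$ estimates; this does not affect correctness.
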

		\begin{proof}
			Regarding the definition of $\Xi^{(y)}_{vu}$, the $\alpha$-H\"older continuity of $\omega$, the regularity of $G$ and the definition of the norm in $X_{\omega,T}$ we infer
			\begin{align*}
			\left|\Xi^{(y)}_{vu} \right| 
			&\leq \left| \omega^S_{vu}(G(y_u)) \right| + \left| z_{vu}(DG(y_u)) \right|\\
			&\leq C \ltn \omega \rtn_\alpha \left|G(y_u) \right| \left(v-u \right)^\alpha 
			+ C \left\| z \right\|_{\alpha} \left| DG(y_u) \right| \left(v-u \right)^{\alpha} \\
			&\leq C \left(\ltn \omega \rtn_\alpha \left(1+\left\|y \right\|_{\infty} \right) + \left\| z \right\|_{\alpha} \right) \left(v-u \right)^\alpha \\
			&\leq C (1+\left\|(y,z) \right\|_{X}) \left(v-u \right)^{\alpha}.
			\end{align*}
			Recalling (\ref{deltay})
			\begin{align*}
			(\hat\delta_2\Xi^{(y)})_{vmu} 
			=  \omega^S_{vm}(G(y_u)-G(y_m)+DG(y_u)(\delta y)_{mu}) + z_{vm}(DG(y_u)-DG(y_m)),
			\end{align*}
			together with the regularity assumptions on $y$ and $z$, further results in
			\begin{align*}
			\left| (\hat\delta_2\Xi^{(y)})_{vmu} \right|
			&\leq \left| \omega^S_{vm}(G(y_u)-G(y_m)+DG(y_u)(\delta y)_{mu}) \right|
			+ \left| z_{vm}(DG(y_u)-DG(y_m)) \right| \\
			&\leq C \ltn \omega \rtn_\alpha \left|G(y_u)-G(y_m) + DG(y_u)(\delta y)_{mu} \right| \left(v-m \right)^\alpha \\
			&+ C \left\| z \right\|_{\alpha+\beta,\beta} \left|DG(y_u)-DG(y_m) \right| m^{-\beta} \left(v-m \right)^{\alpha+\beta} \\
			&\leq C \ltn \omega \rtn_\alpha \left|y_u-y_m \right|^2 \left(v-m \right)^\alpha 
			+ C \left\| z \right\|_{\alpha+\beta,\beta} \left|y_u - y_m \right| m^{-\beta} \left(v-m \right)^{\alpha+\beta}.
			\end{align*}
			We observe that we have two different possibilities to estimate $\left|y_u - y_m \right|$. Obviously,
			\begin{align*}
			\left|y_u - y_m \right| \leq 2 \left\| y \right\|_\infty \quad \text{and} \quad
			\left|y_u - y_m \right| \leq \ltn y\rtn_{\beta,\beta} u^{-\beta} \left(m-u \right)^\beta.
			\end{align*}
			Therefore, on the one hand we get
			\begin{align*}
			\left| (\hat\delta_2\Xi^{(y)})_{vmu} \right|
			&\leq  C \ltn \omega \rtn_\alpha \ltn y\rtn_{\beta,\beta}^2 u^{-2\beta} \left(m-u \right)^{2\beta} \left(v-m \right)^\alpha \\
			&+C \left\| z \right\|_{\alpha+\beta,\beta} \ltn y\rtn_{\beta,\beta} u^{-\beta} \left(m-u \right)^{\beta} m^{-\beta} \left(v-m \right)^{\alpha+\beta} \\
			&\leq C \left(\ltn \omega \rtn_\alpha \ltn y\rtn_{\beta,\beta}^2 + \left\| z \right\|_{\alpha+\beta,\beta} \ltn y\rtn_{\beta,\beta} \right) u^{-2\beta} \left(v-u \right)^{\alpha+2\beta} \\
			&\leq C \left(1+\left\|(y,z) \right\|_{X}^2 \right) u^{-2\beta} \left(v-u \right)^{\alpha+2\beta}.
			\end{align*}
			By applying Theorem \ref{lemma_sewing} we can show the existence of $\aI\Xi^{(y)}=\aI\Xi^{(y)}(y,z)$ and obtain the estimate
			\begin{align*}
			\left|(\hat\delta \yb)_{ts} \right| = \left|(\hat\delta \aI\Xi^{(y)})_{ts} \right| \leq C\left(1+\left\|(y,z) \right\|_{X}^2 \right) \left(t-s \right)^{\alpha}. 
			\end{align*}
			 Corollary \ref{corollary_sewing_epsestimate} entails
			\begin{align*}
			\left|(\hat\delta \aI\Xi^{(y)})_{ts} \right|_{D_\beta} &\leq C\left(1+\left\|(y,z) \right\|_{X}^2 \right) \left(t-s \right)^{\alpha-\beta}, \end{align*}
			which implies that
			\begin{align*} \left|\yb_s \right|_{D_\beta} & \leq C\left(1+\left\|(y,z) \right\|_{X}^2 \right) s^{\alpha-\beta}.
			\end{align*}
			Furthermore, by Corollary \ref{corollary_sewing_norm} we obtain
			\begin{align}
			\left\|\yb \right\|_{\beta,\beta}
			&\leq C \left(1+\left\|(y,z) \right\|_{X}^2 \right)  T^\alpha. 
			\end{align}
			On the other hand we also have
			\begin{align*}
			\left| (\hat\delta_2\Xi^{(y)})_{vmu} \right| 
			&\leq  C \ltn \omega \rtn_\alpha \left\|y \right\|_{\infty} \ltn y\rtn_{\beta,\beta} u^{-\beta} \left(m-u \right)^{\beta} \left(v-m \right)^\alpha \\
			&+C \left\| z \right\|_{\alpha+\beta,\beta} \left\|y \right\|_{\infty} m^{-\beta} \left(v-m \right)^{\alpha+\beta} \\
			&\leq  C \left(\ltn \omega \rtn_\alpha \left\|y \right\|_{\infty} \ltn y\rtn_{\beta,\beta} + \left\| z \right\|_{\alpha+\beta,\beta} \left\|y \right\|_{\infty} \right) u^{-\beta} \left(v-u \right)^{\alpha+\beta} \\
			&\leq  C \left(1+\left\|(y,z) \right\|_{X}^2 \right) u^{-\beta} \left(v-u \right)^{\alpha+\beta}.
			\end{align*}
			Hence we can apply Corollary \ref{corollary_sewing_IXiestimate} and obtain
			\begin{align*}
			\left|(\hat\delta \aI\Xi^{(y)})_{ts} - \Xi^{(y)}_{ts} \right| &\leq C\left(1+\left\|(y,z) \right\|_{X}^2 \right) s^{-\beta} \left(t-s\right)^{\alpha+\beta},
			\end{align*}
			which leads to
			\begin{align*}
		  \left| (\hat\delta \yb)_{ts} - \omega^S_{ts}(G(y_s)) \right| &\leq \left|(\hat\delta \aI\Xi^{(y)})_{ts} - \Xi^{(y)}_{ts} \right| + \left|z_{ts}(DG(y_s)) \right| \\
			&\leq C \left(1+\left\|(y,z) \right\|_{X}^2 \right) s^{-\beta} \left(t-s \right)^{\alpha+\beta}.
			\end{align*}
		\qed \\ \end{proof}
		
		We now focus in deriving suitable estimates for $\overline{z}$.
		
		\begin{lemma}[Estimates of the $z$-integral]\label{lemma_z}
			Let $(y,z) \in X_{\omega,T}$. The following estimates are valid:
			\begin{align}
			\left|\zb_{ts}(E)\right| &\leq C |E| \left(1+\left\|(y,z) \right\|_{X}^2 \right) \left[\left(t-s \right)^{2\alpha} + s^{\alpha-\beta} \left(t-s \right)^{\alpha+\beta}\right],
			\label{estimate_z}\\
			\left\| \zb \right\|_{\alpha+\beta} &\leq C \left(1+\left\|(y,z) \right\|_{X}^2 \right) T^{\alpha-\beta}.
			\label{estimate_z_norm}
			\end{align}
		\end{lemma}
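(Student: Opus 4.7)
The approach is to apply Theorem~\ref{lemma_sewing} together with Corollary~\ref{corollary_sewing_IXiestimate} to the approximation term
\begin{align*}
\Xi^{(z)}_{vu}(E) = b_{vu}(E,G(y_u)) + a_{vu}(E,\yb_u),
\end{align*}
and then incorporate the correction $\omega^{S}_{ts}(E\yb_s)$. The basic bound $|\Xi^{(z)}_{vu}(E)| \leq c_1 |E| (v-u)^{\alpha}$ is immediate: use \eqref{estimate_b} with $|G(y_u)|_{D_\beta} \leq C(1+\|y\|_\infty)$ (from assumption~$(G)$), and \eqref{estimate_a} together with the sup-bound $|\yb_u| \leq C(1+\|(y,z)\|_X^2) u^\alpha$ coming from \eqref{estimate_y_hatdelta}.

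The heart of the argument is the algebraic identity
\begin{align*}
(\hat\delta_2 \Xi^{(z)})_{vmu}(E) = a_{vm}\bigl(E,\omega^{S}_{mu}(G(y_u)) - (\hat\delta \yb)_{mu}\bigr) + b_{vm}\bigl(E,G(y_u) - G(y_m)\bigr).
\end{align*}
To obtain it, one separates the fixed and varying inputs of $a$ and $b$ by adding and subtracting $a_{vm}(E,\yb_u)$ and $b_{vm}(E,G(y_u))$, applies \eqref{algebraic_a} and \eqref{algebraic_b}, and collapses using $a_{vm}(E,(S(m-u)-\mbox{Id})\yb_u) + a_{vm}(E,\yb_u-\yb_m) = -a_{vm}(E,(\hat\delta \yb)_{mu})$. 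This identity is designed precisely so that the factor $\omega^{S}_{mu}(G(y_u)) - (\hat\delta \yb)_{mu}$ is the one controlled by \eqref{estimate_y_help1}. Combining with \eqref{estimate_a}, \eqref{estimate_b}, and the Lipschitz property $G \colon W \to \aL(V,D_\beta)$, one obtains
\begin{align*}
|(\hat\delta_2 \Xi^{(z)})_{vmu}(E)| \leq C|E|(1+\|(y,z)\|_X^2)\, u^{-\beta}(v-u)^{2\alpha+\beta}.
\end{align*}
Bounding the same expression via the individual $\alpha$-H\"older rates of $\omega^{S}_{mu}(G(y_u))$ and $(\hat\delta \yb)_{mu}$, and estimating $|G(y_u)-G(y_m)|_{D_\beta} \leq C\|y\|_\infty$, supplies an auxiliary bound of the form $C|E|(1+\|(y,z)\|_X^2)(v-u)^{2\alpha}$.

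Feeding the first bound into Theorem~\ref{lemma_sewing} with $\rho = 2\alpha+\beta$ is admissible, since $\alpha+2\beta>1$ together with $\alpha>\beta$ gives $\rho > \alpha + 2\beta > 1$, and $\alpha+\beta \leq \rho$ is trivial. Corollary~\ref{corollary_sewing_IXiestimate} then applies with $(\beta',\rho')=(0,2\alpha)$, for which $\rho' - \beta' = 2\alpha = \rho - \beta$; it yields
\begin{align*}
|(\hat\delta \aI\Xi^{(z)})_{ts}(E) - \Xi^{(z)}_{ts}(E)| \leq C|E|(1+\|(y,z)\|_X^2)(t-s)^{2\alpha}.
\end{align*}
Decomposing $\zb_{ts}(E) = \bigl[(\hat\delta \aI\Xi^{(z)})_{ts}(E) - \Xi^{(z)}_{ts}(E)\bigr] + b_{ts}(E,G(y_s)) + \bigl[a_{ts}(E,\yb_s) - \omega^{S}_{ts}(E\yb_s)\bigr]$, estimating the second summand by \eqref{estimate_b} produces a $(t-s)^{2\alpha}$ contribution, while the third, estimated via \eqref{estimate_a2} combined with the $D_\beta$-regularity \eqref{estimate_y_beta} of $\yb_s$, produces a $s^{\alpha-\beta}(t-s)^{\alpha+\beta}$ contribution. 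Adding everything yields \eqref{estimate_z}. Dividing \eqref{estimate_z} by $(t-s)^{\alpha+\beta}$ and bounding $(t-s)^{\alpha-\beta},\,s^{\alpha-\beta}\leq T^{\alpha-\beta}$ delivers \eqref{estimate_z_norm}.

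The principal technical hurdle is the algebraic rearrangement in the second paragraph: individually neither the $a$- nor the $b$-piece offers a $\hat\delta_2$-exponent exceeding $1$, and only after cancellation does the combination $\omega^{S}_{mu}(G(y_u)) - (\hat\delta \yb)_{mu}$ emerge. Its extra H\"older regularity, supplied by Lemma~\ref{lemma_y}, is exactly what lifts the $\hat\delta_2$-exponent past the Sewing threshold and makes the whole construction go through.
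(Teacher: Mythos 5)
Your proposal is correct and follows essentially the same route as the paper: the same algebraic rearrangement of $\hat\delta_2\Xi^{(z)}$ into $a_{vm}(E,\omega^S_{mu}(G(y_u))-(\hat\delta\yb)_{mu}) + b_{vm}(E,G(y_u)-G(y_m))$, the same pair of bounds (singular $u^{-\beta}(v-u)^{2\alpha+\beta}$ and non-singular $(v-u)^{2\alpha}$), the same application of Corollary~\ref{corollary_sewing_IXiestimate} with $(\beta',\rho')=(0,2\alpha)$, and the same final decomposition of $\zb_{ts}$ into a sewing remainder plus $b_{ts}(E,G(y_s))$ plus $a_{ts}(E,\yb_s)-\omega^S_{ts}(E\yb_s)$, estimated by \eqref{estimate_b}, \eqref{estimate_a2} and \eqref{estimate_y_beta}. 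The only cosmetic deviation is that you invoke \eqref{estimate_y_hatdelta} at $s=0$ rather than \eqref{estimate_y_norm} to bound $|\yb_u|$, which is equivalent.
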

		\begin{proof}
			Applying Theorem \ref{lemma_sewing}
			we get
			\begin{align*}
			\left|\Xi^{(z)}_{vu}(E) \right| \leq \left| b_{vu}(E,G(y_u)) \right| + \left| a_{vu}(E,\yb_u) \right|.
			\end{align*}
			
			Furthermore, due to~\eqref{estimate_a} and~\eqref{estimate_b} together with the Lipschitz continuity of the mapping $G:W\to\mathcal{L}(V,D_{\beta})$, we infer that
			\begin{align*}
			\left|\Xi^{(z)}_{vu}(E) \right| 
			\leq & C \left|E \right| \left(1+\left\|y \right\|_{\infty} \right) \left(\ltn \omega\rtn_\alpha^2 + \left\|\omegaa \right\|_{2\alpha} \right) \left(v-u \right)^{2\alpha}
			+ C \left| E \right| \left\|\yb \right\|_{\infty} \ltn \omega\rtn_\alpha \left(v-u \right)^{\alpha} \\
			\leq & C \left|E \right|\left[ \left(1+\left\|y \right\|_{\infty} \right) \left(\ltn \omega\rtn_\alpha^2 + \left\|\omegaa \right\|_{2\alpha} \right) + \left\|\yb \right\|_{\infty} \ltn \omega\rtn_\alpha \right] \left(v-u \right)^{\alpha}. \\
			\end{align*}
			Using \eqref{estimate_y_norm} we obtain
			\begin{align*}
			\left|\Xi^{(z)}_{vu}(E) \right| 
			\leq &C \left|E \right| \left(1+\left\|(y,z) \right\|_{X}^2 \right) \left(v-u \right)^\alpha.
			\end{align*}
			Furthermore, we have by \eqref{algebraic_a} and \eqref{algebraic_b} that
			\begin{align*}
			(\hat\delta_2 \Xi^{(z),s})_{vmu} (E) 
			&= (\hat\delta_2 b)_{vmu}(E,G(y_u)) + b_{vm}(E,G(y_u)-G(y_m)) \\
			&+ (\hat\delta_2 a)_{vmu}(E,\yb_u) + a_{vm}(E,\yb_u-\yb_m) + \\
			&= a_{vm}(E,\omega^S_{mu}(G(y_u))) + b_{vm}(E,G(y_u)-G(y_m)) \\
			&+ a_{vm}(E,(S(m-u)-\mbox{Id})\yb_u) + a_{vm}(E,\yb_u-\yb_m) \\
			&=  a_{vm}(E,\omega^S_{mu}(G(y_u))-(\hat\delta \yb)_{mu}) + b_{vm}(E,G(y_u)-G(y_m)).
			\end{align*}
			This leads to
			\begin{align*}
			\left| (\hat\delta_2 \Xi^{(z),s})_{vmu} (E) \right|
			&\leq \left| a_{vm}(E,\omega^S_{mu}(G(y_u))-(\hat\delta \yb)_{mu}) \right| + \left| b_{vm}(E,G(y_u)-G(y_m)) \right| \\
			&\leq  C \ltn \omega\rtn_\alpha \left| E \right| \left| \omega^S_{mu}(G(y_u))-(\hat\delta \yb)_{mu} \right| \left(v-m \right)^\alpha \\
			&+ C \left(\ltn \omega\rtn_\alpha^2 + \left\|\omegaa \right\|_{2\alpha} \right) \left|E \right| \left|G(y_u)-G(y_m)\right| \left(v-m \right)^{2\alpha}.
			\end{align*}
			Applying \eqref{estimate_y_help1} entails
			\begin{align*}
			\left| (\hat\delta_2 \Xi^{(z),s})_{vmu} (E) \right|
			&\leq  C \left| E \right| \left(1+\left\|(y,z) \right\|_{X}^2 \right) u^{-\beta} \left(m-u \right)^{\alpha+\beta} \left(v-m \right)^\alpha \\
			&+ C \left|E \right| \ltn y\rtn_{\beta,\beta} u^{-\beta} \left(m-u \right)^{\beta} \left(v-m \right)^{2\alpha} \\
			&\leq C \left| E \right| \left(1+\left\|(y,z) \right\|_{X}^2 \right)
			u^{-\beta} \left(v-u \right)^{2\alpha+\beta}.\\
			\end{align*}
			Hence, we can apply again Theorem \ref{lemma_sewing}, however this does not give us the appropriate estimates. By a slightly different computation we obtain
			\begin{align*}
			\left| (\hat\delta_2 \Xi^{(z),s})_{vmu} (E) \right| 
			&\leq  C \ltn \omega\rtn_\alpha \left|E\right| \left| \left(\omega^S_{mu}(G(y_u)) \right|
			+\left|(\hat\delta \yb)_{mu} \right|\right) \left(v-m \right)^\alpha \\
			&+C \left(\ltn \omega\rtn_\alpha^2 + \left\|\omegaa \right\|_{2\alpha} \right) \left|E \right| \left|G(y_u)-G(y_m)\right| \left(v-m \right)^{2\alpha}.
			\end{align*}
			Now  \eqref{estimate_y_hatdelta} entails
			\begin{align*}
			\left| (\hat\delta_2 \Xi^{(z),s})_{vmu} (E) \right| 
			\leq C \left|E\right| \left(1+\left\|(y,z) \right\|_{X}^2 \right) \left(v-u \right)^{2\alpha}.
			\end{align*}
			Hence, by Corollary \ref{corollary_sewing_IXiestimate} we derive
			\begin{align*}
			\left|(\hat\delta \aI \Xi^{(z)})_{ts}(E)-\Xi^{(z)}_{ts}(E) \right|&\leq C \left|E\right| \left(1+\left\|(y,z) \right\|_{X}^2 \right)\left(t-s\right)^{2\alpha}.
			\end{align*}
			Furthermore, let us consider
			\begin{align*}
			\left|\Xi^{(z)}_{ts}(E)-\omega^S_{ts}(E \yb_s) \right|
			\leq & \left| b_{ts}(E,G(y_s)) \right| + \left| a_{ts}(E,\yb_s)-\omega^S_{ts}(E \yb_s) \right|.
			\end{align*}
			Applying \eqref{estimate_a2} and \eqref{estimate_b} entails
			\begin{align*}
			\left|\Xi^{(z)}_{ts}(E)-\omega^S_{ts}(E \yb_s) \right|
			&\leq C \left|E \right| \left(1+\left\|y \right\|_{\infty} \right) \left(\ltn \omega\rtn_\alpha^2 + \left\|\omegaa \right\|_{2\alpha} \right) \left(t-s \right)^{2\alpha}\\
			&+ C \left| E \right| \ltn \omega\rtn_\alpha \left|\yb_s \right|_{D_\beta} \left(t-s \right)^{\alpha+\beta}.
			\end{align*}
			By using \eqref{estimate_y_beta} we obtain
			\begin{align*}
			\left|\Xi^{(z)}_{ts}(E)-\omega^S_{ts}(E \yb_s) \right| 
			\leq C \left| E \right| \left(1+\left\|(y,z) \right\|_X^2 \right) 
			\left[s^{\alpha-\beta} \left(t-s\right)^{\alpha+\beta} + \left(t-s \right)^{2\alpha} \right].
			\end{align*}
			Summarizing, we conclude
			\begin{align*}
			\left|\zb_{ts}(E)\right| 
			&= \left|(\hat\delta \aI \Xi^{(z)})_{ts}(E)- \omega^S_{ts}(E \yb_s)\right| \\
			&\leq \left|(\hat\delta \aI \Xi^{(z)})_{ts}(E)-\Xi^{(z)}_{ts}(E) \right| 
			+ \left|\Xi^{(z)}_{ts}(E)-\omega^S_{ts}(E \yb_s) \right| \\ 
			&\leq C |E| \left(1+\left\|(y,z) \right\|_{X}^2 \right) \left[\left(t-s \right)^{2\alpha} + s^{\alpha-\beta} \left(t-s \right)^{\alpha+\beta}\right].
			\end{align*}
			Consequently, we get
			\begin{align*}
			\left\| \zb \right\|_{\alpha+\beta} &\leq C \left(1+\left\|(y,z) \right\|_{X}^2 \right) T^{\alpha-\beta}.
			\end{align*}
		\qed \\ \end{proof}
		After establishing suitable analytic properties we focus now on the algebraic setting.
		\begin{lemma}\label{lemma_algebraic}
		The	following algebraic property
			\begin{align*}
			(\hat\delta_2 \zb)_{vmu}(E) = \omega^S_{vm}(E (\delta \yb)_{mu})
			\end{align*}
			holds true.
		\end{lemma}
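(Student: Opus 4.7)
The plan is to compute $(\hat\delta_2 \bar z)_{vmu}(E)$ directly from the definition
\begin{align*}
\bar z_{ts}(E) = (\hat\delta \aI\Xi^{(z)}(y,\bar y))_{ts}(E) - \omega^S_{ts}(E\bar y_s),
\end{align*}
and split it accordingly into two contributions. Since $\hat\delta_2$ and $\hat\delta$ compose to zero by Lemma \ref{lemma_preliminaries_hatdelta}(i), the first summand drops out and we are left with
\begin{align*}
(\hat\delta_2 \bar z)_{vmu}(E) = -\bigl(\hat\delta_2 Z\bigr)_{vmu}(E), \qquad Z_{ts}(E) := \omega^S_{ts}(E \bar y_s).
\end{align*}

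The task therefore reduces to understanding $\hat\delta_2 Z$. I will expand
\begin{align*}
(\hat\delta_2 Z)_{vmu}(E) = \omega^S_{vu}(E\bar y_u) - \omega^S_{vm}(E\bar y_m) - S(v-m)\,\omega^S_{mu}(E\bar y_u),
\end{align*}
and use the algebraic relation $\hat\delta_2 \omega^S \equiv 0$ from Lemma \ref{algebraic} (applied with the fixed placeholder $K = E\bar y_u$) to combine the first and third terms into $\omega^S_{vm}(E\bar y_u)$. Then the linearity of $\omega^S_{vm}(\cdot)$ in its argument gives
\begin{align*}
(\hat\delta_2 Z)_{vmu}(E) = \omega^S_{vm}(E\bar y_u) - \omega^S_{vm}(E\bar y_m) = -\,\omega^S_{vm}\bigl(E(\delta\bar y)_{mu}\bigr),
\end{align*}
from which the claimed identity follows immediately.

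There is essentially no analytic obstacle here: the statement is a purely algebraic consistency check, and the two ingredients we need — $\hat\delta_2 \circ \hat\delta \equiv 0$ and $\hat\delta_2 \omega^S \equiv 0$ — have already been established. The only subtle point to flag is that $\omega^S_{ts}$ should be read as defined through the extension \eqref{definition_omegaS2}, so that its linearity in the integrand and its Chen-type relation remain valid for the rough $\omega$; both properties were preserved in the passage to the limit carried out in Lemma \ref{lemma:cont:dependence1}, so the manipulation above is rigorously justified.
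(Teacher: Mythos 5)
Your proposal is correct and is essentially the paper's own argument: the paper's one‑line proof likewise decomposes $\hat\delta_2\zb$ into the $\hat\delta_2\hat\delta\aI\Xi^{(z)}$ piece (killed by Lemma \ref{lemma_preliminaries_hatdelta}(i)), a $\hat\delta_2\omega^S$ piece (killed by \eqref{algebraic_omegaS}), and the surviving term $\omega^S_{vm}(E(\delta\yb)_{mu})$. Your extra remark about reading $\omega^S$ via the extended definition \eqref{definition_omegaS2} is a reasonable flag but adds nothing beyond what the paper already establishes.
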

		\begin{proof}
			By \eqref{algebraic_omegaS} and Lemma \ref{lemma_preliminaries_hatdelta} \eqref{2.7i} we have
			\begin{align*}
			(\hat\delta_2 \zb)_{vmu}(E) 
			&= (\hat\delta_2 \hat\delta \aI \Xi^{(z)})_{vmu}(E) - (\hat\delta_2 \omega^S)_{vmu}(E \yb_u) + \omega^S_{vm}(E (\delta \yb)_{mu}) \\
			&= \omega^S_{vm}(E (\delta \yb)_{mu}).
			\end{align*}
		\qed \\ \end{proof}
We now have all the necessary ingredients to analyze the mapping $\mathcal{M}_{T}$ and proceed towards our fixed-point argument.
		\begin{theorem}\label{theorem_M}
			The mapping $\mathcal{M}_{T}$ maps $X_{\omega,T}$ into itself. Moreover, the estimate
			\begin{align*}
			\left\|\aM_T(y,z)\right\|_{X} \leq C \left(\left|\xi\right| + \left(1+\left\|(y,z) \right\|_X^2\right)T^\alpha \right) 
			\end{align*}
			holds true.  
		\end{theorem}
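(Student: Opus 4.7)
The plan is to decompose $\mathcal{M}_T(y,z)=(\widetilde y,\widetilde z)$ into the ``initial condition'' part and the rough-integral part, and to estimate each contribution separately using the analytic and algebraic results assembled in Sections \ref{sec:supp:proc} and in Lemmas \ref{lemma_y}--\ref{lemma_algebraic}. Concretely, I would first treat $\widetilde y_t = S(t)\xi + \overline y_t$: Lemma \ref{betabeta} gives $\|S(\cdot)\xi\|_{\beta,\beta}\le C|\xi|$, while \eqref{estimate_y_norm} in Lemma \ref{lemma_y} gives $\|\overline y\|_{\beta,\beta}\le C(1+\|(y,z)\|_X^2)T^\alpha$. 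Adding these yields the desired $y$-bound.

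For the area component $\widetilde z_{ts}(E)=\overline z_{ts}(E) + a_{ts}(E,S(s)\xi)-\omega^S_{ts}(E\,S(s)\xi)$, the estimates of $\overline z$ in the $\|\cdot\|_\alpha$ and $\|\cdot\|_{\alpha+\beta,\beta}$ norms follow directly from Lemma \ref{lemma_z}: writing $(t-s)^{2\alpha}=(t-s)^\alpha(t-s)^\alpha$ and $s^{\alpha-\beta}(t-s)^{\alpha+\beta}=s^{\alpha-\beta}(t-s)^\beta\cdot(t-s)^\alpha$, both summands in \eqref{estimate_z} are controlled by $C(1+\|(y,z)\|_X^2)T^\alpha(t-s)^\alpha$, and analogously for the weighted norm with factor $s^\beta$. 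The delicate point is the correction $a_{ts}(E,S(s)\xi)-\omega^S_{ts}(E\,S(s)\xi)$, where the singularity $|S(s)\xi|_{D_\beta}\le C s^{-\beta}|\xi|$ has to be reconciled with the denominator $(t-s)^\alpha$. I would distinguish two regimes: for $s\le t-s$, use \eqref{estimate_a} and \eqref{estimate_omegaS} directly with $x=S(s)\xi\in W$, producing a bound $C|E||\xi|(t-s)^\alpha$; for $s>t-s$, apply \eqref{estimate_a2} together with $|S(s)\xi|_{D_\beta}\le C s^{-\beta}|\xi|$, so that $s^{-\beta}(t-s)^\beta\le 1$. Both regimes give $C|\xi||E|$ in the $\|\cdot\|_\alpha$ norm and a similar bound in the $\|\cdot\|_{\alpha+\beta,\beta}$ norm after multiplication by $s^\beta$.

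The main obstacle is verifying the algebraic constraint $(\hat\delta_2\widetilde z)_{vmu}(E)=\omega^S_{vm}(E(\delta\widetilde y)_{mu})$, which is required for $(\widetilde y,\widetilde z)\in X_{\omega,T}$. Lemma \ref{lemma_algebraic} handles the $\overline z$-part, giving $(\hat\delta_2\overline z)_{vmu}(E)=\omega^S_{vm}(E(\delta\overline y)_{mu})$. For the correction, a short computation using \eqref{algebraic_a} with $x=S(u)\xi$ together with $(S(m-u)-\mbox{Id})S(u)\xi=S(m)\xi-S(u)\xi$ shows that the process $f_{ts}:=a_{ts}(E,S(s)\xi)$ satisfies $(\hat\delta_2 f)_{vmu}=0$; in parallel, using \eqref{algebraic_omegaS} and linearity of $\omega^S$ in its argument gives $(\hat\delta_2\omega^S(\cdot\,S(\cdot)\xi))_{vmu}=-\omega^S_{vm}(E(S(m)-S(u))\xi)$. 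Summing these contributions together with Lemma \ref{lemma_algebraic} produces exactly $\omega^S_{vm}(E[(\delta\overline y)_{mu}+(S(m)-S(u))\xi])=\omega^S_{vm}(E(\delta\widetilde y)_{mu})$, as required.

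Finally, collecting all pieces gives
\begin{align*}
\|\mathcal{M}_T(y,z)\|_X
&\le \|S(\cdot)\xi\|_{\beta,\beta} + \|\overline y\|_{\beta,\beta} + \|\overline z\|_\alpha + \|\overline z\|_{\alpha+\beta,\beta} \\
&\quad + \|a(\cdot,S(\cdot)\xi)-\omega^S(\cdot\,S(\cdot)\xi)\|_\alpha + \|a(\cdot,S(\cdot)\xi)-\omega^S(\cdot\,S(\cdot)\xi)\|_{\alpha+\beta,\beta} \\
&\le C\bigl(|\xi| + (1+\|(y,z)\|_X^2)T^\alpha\bigr),
\end{align*}
which is precisely the claimed bound. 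I would organise the write-up so that the analytic bounds are invoked first (they follow rather mechanically from the preceding lemmas) and the algebraic verification, which is the only genuinely new computation, is carried out at the end.
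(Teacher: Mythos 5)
Your proposal is correct and follows essentially the same route as the paper: decompose $\widetilde y = S(\cdot)\xi+\overline y$ and $\widetilde z = \overline z + a(\cdot,S(\cdot)\xi) - \omega^S(\cdot\,S(\cdot)\xi)$, bound the $\overline y,\overline z$ parts via Lemmas \ref{lemma_y}, \ref{lemma_z} and the $\xi$-parts via Lemma \ref{betabeta}, \eqref{estimate_a}, \eqref{estimate_a2}, \eqref{estimate_omegaS}, and verify the algebraic constraint from Lemma \ref{lemma_algebraic} together with \eqref{algebraic_a}, \eqref{algebraic_omegaS}. The only cosmetic difference is that the two-regime split $s\le t-s$ vs.\ $s>t-s$ for the $\alpha$-norm of the correction is unnecessary (the paper bounds $|a_{ts}(E,S(s)\xi)|$ and $|\omega^S_{ts}(ES(s)\xi)|$ uniformly by $C|E|\,|\xi|(t-s)^\alpha$ using $\|S(\cdot)\xi\|_\infty\le C|\xi|$), and your packaging of the algebraic check through $(\hat\delta_2 f)_{vmu}=0$ for $f_{ts}=a_{ts}(E,S(s)\xi)$ is a slightly tidier organization of the same computation.
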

		\begin{proof}
			Recall
			\begin{align*}
			\yt_t = S(t)\xi + \yb_t.
			\end{align*}
			Hence, applying \eqref{estimate_y_norm} we derive
			\begin{align*}
			\left\|\yt \right\|_\infty 
			&\leq C \left(\left| \xi \right| + \left\| \yb \right\|_\infty \right) \\
			&\leq C \left(\left| \xi \right| + \left(1+\left\|(y,z)\right\|_X^2 \right) T^\alpha \right)
			\end{align*}
			and
			\begin{align*}
			\ltn \yt \rtn_{\beta,\beta} 
			&\leq \ltn S(\cdot) \xi \rtn_{\beta,\beta} + \ltn \yb \rtn_{\beta,\beta} \\
			&\leq C \left(\left| \xi \right| + \left(1+\left\|(y,z)\right\|_X^2 \right) T^\alpha \right).
			\end{align*}
			Moreover, we have
			\begin{align*}
			\zt_{ts}(E)= \zb_{ts}(E)+ a_{ts}(E,S(s)\xi) - \omega^S_{ts}(ES(s)\xi).
			\end{align*}
			On the one hand by \eqref{estimate_z_norm}, \eqref{estimate_a} and \eqref{estimate_omegaS} we get 
			\begin{align*}
			\left\|\zt \right\|_{\alpha} 
			&\leq \left\| \zb \right\|_{\alpha+\beta} T^{\beta} + C \left\|S(\cdot) \xi \right\|_{\infty} \\ 
			&\leq C \left(\left|\xi \right| + \left(1+\left\|(y,z) \right\|_{X}^2 \right)T^\alpha \right).
			\end{align*}
			On the other hand we apply \eqref{estimate_z_norm} and \eqref{estimate_a2} and infer that
			\begin{align*}
			\left\|\zt \right\|_{\alpha+\beta,\beta} 
			&\leq \left\| \zb \right\|_{\alpha+\beta} T^{\beta} + C \sup\limits_{0<s<T} s^\beta \left|S(s) \xi \right|_{D_\beta} \\
			&\leq C \left(\left|\xi \right| + \left(1+\left\|(y,z) \right\|_{X}^2 \right)T^\alpha \right).
			\end{align*}
			Summarizing we obtain the required estimate
			\begin{align*}
			\left\|\aM(y,z)\right\|_{X} \leq C\left(\left|\xi \right| + \left(1+\left\|(y,z) \right\|_X^2 \right) T^\alpha \right).
			\end{align*}
			The last step is to prove the corresponding algebraic relation. Due to Lemma \ref{lemma_algebraic} and of the algebraic relations \eqref{algebraic_omegaS} and \eqref{algebraic_a} we compute
			\begin{align*}
			(\hat\delta_2 \zt)_{t \tau s}(E) 
			= &(\hat\delta_2 \zb)_{t \tau s}(E) + (\hat\delta_2 a)_{t \tau s}(E,S(s) \xi) + a_{t \tau}(E,S(s) \xi - S(\tau) \xi)\\
			+& (\hat\delta_2 \omega^S)_{t \tau s}(E S(s)\xi) + \omega^S_{t \tau} (E (S(s)\xi - S(\tau) \xi)) \\
			= & \omega^S_{t \tau} (E (\yb_\tau-\yb_s)) + a_{t \tau} (E, S(\tau)\xi-S(s)\xi) + a_{t \tau}(E,S(s) \xi - S(\tau) \xi)\\
			+& \omega^S_{t \tau} (E (S(s)\xi - S(\tau) \xi)) \\
			= & \omega^S_{t \tau} (E (\yt_\tau-\yt_s)).
			\end{align*} 
		\qed \\ \end{proof}

		
		In order to show the existence of a unique local mild solution for (\ref{eq1}) by means of Banach's Fixed-Point Theorem we verify that $\mathcal{M}$ is a contraction. To this aim, analogously to Lemmas \ref{lemma_y} and \ref{lemma_z} we derive the necessary estimates.
		 
		\begin{lemma}[estimate for $\delta y$-integral]\label{lemma_y_contraction}
			Let $(y^1,z^1),\mbox{ and }(y^2,z^2) \in X_{\omega,T}$. Then we have 
			\begin{align}
			\left|(\hat\delta (\yb^1-\yb^2))_{ts} \right| 
			&\leq C \left(1+\left\| (y^1,z^1) \right\|_X^2 +\left\| (y^2,z^2) \right\|_X^2 \right)
			\left\| (y^1-y^2,z^1-z^2) \right\|_X \left(t-s \right)^{\alpha},
			\label{estimate_deltay_hatdelta} \\
			\left|\yb^1_s-\yb^2_s \right|_{D_\beta} 
			&\leq C \left(1+\left\| (y^1,z^1) \right\|_X^2 +\left\| (y^2,z^2) \right\|_X^2 \right)
			\left\| (y^1-y^2,z^1-z^2) \right\|_X s^{\alpha-\beta}
			\label{estimate_deltay_beta} \\
			\left\|\yb^1 - \yb^2 \right\|_{\beta,\beta} 
			&\leq C \left(1+\left\| (y^1,z^1) \right\|_X^2 +\left\| (y^2,z^2) \right\|_X^2 \right)
			\left\| (y^1-y^2,z^1-z^2) \right\|_X T^\alpha
			\label{estimate_deltay_norm}
			\end{align}
			as well as
			\begin{align}
			\begin{split}
			&\left| (\hat\delta (\yb^1-\yb^2))_{ts} - \omega^S_{ts}(G(y^1_s)-G(y^2_s)) \right| \\ 
			&\leq C \left(1+\left\| (y^1,z^1) \right\|_X^2 +\left\| (y^2,z^2) \right\|_X^2 \right)
			\left\| (y^1-y^2,z^1-z^2) \right\|_X s^{-\beta} \left(t-s\right)^{\alpha+\beta}.
			\end{split}
			\label{estimate_deltay_help1}
			\end{align}
			
		\end{lemma}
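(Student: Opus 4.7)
The plan is to mirror the proof of Lemma \ref{lemma_y} applied to the difference $\Delta\Xi_{vu} := \Xi^{(y)}(y^1,z^1)_{vu} - \Xi^{(y)}(y^2,z^2)_{vu}$, exploiting the linearity of $\aI$ stated in Remark \ref{remark_sewing_linearity}. Since $\yb^i = \aI \Xi^{(y)}(y^i,z^i)$, decomposing and inserting mixed terms yields
\begin{align*}
\Delta\Xi_{vu} = \omega^S_{vu}(G(y^1_u)-G(y^2_u)) + z^1_{vu}(DG(y^1_u)-DG(y^2_u)) + (z^1-z^2)_{vu}(DG(y^2_u)).
\end{align*}
The Lipschitz continuity of $G$ and $DG$ (the latter follows from the bounded second derivative assumed in (G)), together with the bound $|G(y^2_u)| \leq C(1+\|y^2\|_\infty)$ and $|DG(y^2_u)|\leq C$, gives an $\alpha$-H\"older estimate of the shape $|\Delta\Xi_{vu}|\leq C(1+\|(y^1,z^1)\|_X + \|(y^2,z^2)\|_X)\,\|(y^1-y^2,z^1-z^2)\|_X (v-u)^\alpha$.

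Next, I would compute $\hat\delta_2 \Delta\Xi$. The same cancellation that produced \eqref{deltay} in Lemma \ref{lemma_y} works for each of the three summands, so $(\hat\delta_2\Delta\Xi)_{vmu}$ reduces to terms of the form $\omega^S_{vm}(\cdot)$ and $z^i_{vm}(\cdot)$ whose arguments contain expressions like
\begin{align*}
&(G(y^1_u)-G(y^1_m)) - (G(y^2_u)-G(y^2_m)) + DG(y^1_u)(\delta y^1)_{mu} - DG(y^2_u)(\delta y^2)_{mu}, \\
&(DG(y^1_u)-DG(y^1_m)) - (DG(y^2_u)-DG(y^2_m)), \quad DG(y^2_u)-DG(y^2_m).
\end{align*}
Using the bounded second and third derivatives of $G$ and splitting each factor as either $\|y^1-y^2\|_\infty \leq \|(y^1-y^2,z^1-z^2)\|_X$ or $\ltn y^1-y^2\rtn_{\beta,\beta} u^{-\beta}(m-u)^\beta$ (and symmetrically for $y^1$ vs.\ $y^2$), I expect to obtain the two bounds
\begin{align*}
|(\hat\delta_2\Delta\Xi)_{vmu}| \leq C \bigl(1 + \|(y^1,z^1)\|_X^2 + \|(y^2,z^2)\|_X^2\bigr)\|(y^1-y^2,z^1-z^2)\|_X \, u^{-2\beta}(v-u)^{\alpha+2\beta}
\end{align*}
and the corresponding weaker one with exponents $-\beta$ and $\alpha+\beta$, exactly as in Lemma \ref{lemma_y}.

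With both estimates in hand, Theorem \ref{lemma_sewing} (applied via the linearity of $\aI$) yields \eqref{estimate_deltay_hatdelta}; Corollary \ref{corollary_sewing_epsestimate} with $\eps=\beta$ together with $\yb^i_s = (\hat\delta \aI\Xi^{(y)})_{s0}$ yields \eqref{estimate_deltay_beta}; Corollary \ref{corollary_sewing_norm} yields \eqref{estimate_deltay_norm}; and Corollary \ref{corollary_sewing_IXiestimate} (with the weaker bound $u^{-\beta}(v-u)^{\alpha+\beta}$) combined with $|(z^1-z^2)_{ts}(DG(y^2_s))|\leq C s^{-\beta}(t-s)^{\alpha+\beta}\|(y^1-y^2,z^1-z^2)\|_X$ yields \eqref{estimate_deltay_help1}.

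The main obstacle is the careful bookkeeping needed to pull out a single linear factor $\|(y^1-y^2,z^1-z^2)\|_X$ while absorbing all the quadratic cross terms into $1+\|(y^1,z^1)\|_X^2+\|(y^2,z^2)\|_X^2$. In particular, the cubic-looking term $DG(y^1_u)(\delta y^1)_{mu} - DG(y^2_u)(\delta y^2)_{mu}$ has to be split as $(DG(y^1_u)-DG(y^2_u))(\delta y^1)_{mu} + DG(y^2_u)((\delta y^1)_{mu}-(\delta y^2)_{mu})$ so that only one of the two $\ltn \cdot \rtn_{\beta,\beta}$-type factors carries the difference; this is the routine but error-prone step where the structure of the argument is dictated by the mixed-cross-term trick rather than by any new rough-path idea.
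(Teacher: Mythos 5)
Your proposal reproduces the paper's argument: split the difference $\Xi^{(y)}(y^1,z^1)-\Xi^{(y)}(y^2,z^2)$ into the same three cross-terms, use the identity \eqref{deltay} together with the algebraic relation on $z^i$ to compute $\hat\delta_2$ of the difference, and then control the resulting arguments via the division-property estimates for $G$ and $DG$ (the paper packages these in Lemmas~\ref{lemma_G1}--\ref{lemma_G2} and Corollaries~\ref{corollary_G1}--\ref{corollary_G2}), so that linearity of $\aI$ plus Theorem~\ref{lemma_sewing} and Corollaries~\ref{corollary_sewing_epsestimate}, \ref{corollary_sewing_norm}, \ref{corollary_sewing_IXiestimate} deliver the four bounds. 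This is essentially identical to the paper's proof.
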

		\begin{proof}
			We have
			\begin{align*}
			\yb^1_t - \yb^2_t = \aI\Xi^{(y)}(y^1,z^1)_t - \aI\Xi^{(y)}(y^2,z^2)_t = \aI\left(\Xi^{(y)}(y^1,z^1)-\Xi^{(y)}(y^2,z^2)\right)_t.
			\end{align*}
			We make the same deliberations as in Lemma \ref{lemma_y} and use the assumptions on $G$. 	\begin{align*}
			&\left|\Xi^{(y)}(y^1,z^1)_{vu}-\Xi^{(y)}(y^2,z^2)_{vu}\right| \\
			&\leq \left| \omega^S_{vu}(G(y^1_u)-G(y^2_u)) \right| + \left| z^1_{vu}(DG(y^1_u)) - z^2_{vu}(DG(y^2_u)) \right|\\
			&\leq \left| \omega^S_{vu}(G(y^1_u)-G(y^2_u)) \right| + \left| z^1_{vu}\left(DG(y^1_u)-DG(y^2_u)\right)\right|
			+ \left| \left(z^1-z^2\right)_{vu}(DG(y^2_u)) \right|\\
			&\leq C \ltn \omega \rtn_\alpha \left\|y^1-y^2 \right\|_{\infty} \left(v-u \right)^\alpha 
			+ C \left\| z^1 \right\|_{\alpha} \left\|y^1-y^2 \right\|_{\infty} \left(v-u \right)^{\alpha}
			+ C \left\| z^1-z^2\right\|_{\alpha} \left(v-u \right)^{\alpha} \\
			&\leq  C \left(1+\left\|(y^1,z^1)\right\|_X\right) \left\|(y^1-y^2,z^1-z^2) \right\|_X.
			\end{align*}
			Furthermore,
			\begin{align*}
			&\left| (\hat\delta_2\Xi^{(y)}(y^1,z^1))_{vmu} - (\hat\delta_2\Xi^{(y)}(y^2,z^2))_{vmu} \right| \\
			&\leq \left| \omega^S_{vm}\left(G(y^1_u)-G(y^1_m)+DG(y^1_u)(\delta y^1)_{mu}-G(y^2_u)+G(y^2_m) - DG(y^2_u)(\delta y^2)_{mu}\right)\right| \\
			&+ \left| z^1_{vm}(DG(y^1_u)-DG(y^1_m)) - z^2_{vm}(DG(y^2_u)-DG(y^2_m)) \right|\\
			&\leq C \ltn \omega \rtn_{\alpha}\left| G(y^1_u)-G(y^1_m)+DG(y^1_u)(\delta y^1)_{mu}-G(y^2_u)+G(y^2_m) - DG(y^2_u)(\delta y^2)_{mu}\right|\left(v-u \right)^{\alpha} \\
			&+ \left| z^1_{vm}(DG(y^1_u)-DG(y^1_m)-DG(y^2_u)+DG(y^2_m)) \right| 
			+ \left|\left(z^1 - z^2\right)_{vm}(DG(y^2_u)-DG(y^2_m)) \right|.
			\end{align*}
			As in Lemma \ref{lemma_y} we have two possibilities to estimate these terms.\\[0.5ex]
			
			By \eqref{estimate_G1beta} and \eqref{estimate_G1infty} we infer
			\begin{align*}
			&\left| (\hat\delta_2\Xi^{(y)}(y^1,z^1))_{vmu} - (\hat\delta_2\Xi^{(y)}(y^2,z^2))_{vmu} \right| \\
			&\leq  C \ltn \omega \rtn_{\alpha} 
			\left[\left(\ltn y^1 \rtn_{\beta,\beta} + \ltn y^2\rtn_{\beta,\beta}\right) \ltn y^1-y^2\rtn_{\beta,\beta} 
			+ \ltn y^2\rtn_{\beta,\beta}^2 \left\|y^1-y^2 \right\|_{\infty} \right]
			u^{-2\beta} \left(v-u \right)^{\alpha+2\beta} \\
			&+ C \left\| z^1 \right\|_{\alpha+\beta,\beta} \left[\ltn y^1-y^2\rtn_{\beta,\beta} + \ltn y^2\rtn_{\beta,\beta} \left\|y^1-y^2 \right\|_{\infty} \right] u^{-2\beta}\left(v-u \right)^{\alpha+2\beta} \\
			&+ C\left\|z^1-z^2 \right\|_{\alpha+\beta,\beta} \ltn y^2\rtn_{\beta,\beta} u^{-2\beta}\left(v-u \right)^{\alpha+2\beta}\\
			&\leq C \left(1+\left\| (y^1,z^1) \right\|_X^2 +\left\| (y^2,z^2) \right\|_X^2 \right) \left\|(y^1-y^2,z^1-z^2)\right\|_X
			u^{-2\beta}\left(v-u \right)^{\alpha+2\beta}.
			\end{align*}
			Again, applying Theorem \ref{lemma_sewing} entails
			\begin{align*}
			\left|(\hat\delta (\yb^1-\yb^2))_{ts} \right| 
			&\leq C \left(1+\left\| (y^1,z^1) \right\|_X^2 +\left\| (y^2,z^2) \right\|_X^2 \right)
			\left\| (y^1-y^2,z^1-z^2) \right\|_X \left(t-s \right)^{\alpha}.
			\end{align*}
			By Corollary \ref{corollary_sewing_epsestimate} we obtain
			\begin{align*}
			\left|\yb^1_s-\yb^2_s \right|_{D_\beta} 
			&\leq C \left(1+\left\| (y^1,z^1) \right\|_X^2 +\left\| (y^2,z^2) \right\|_X^2 \right)
			\left\| (y^1-y^2,z^1-z^2) \right\|_X s^{\alpha-\beta},
			\end{align*}
			and with Corollary \ref{corollary_sewing_norm} we have
			\begin{align*}
			\left\|\yb^1 - \yb^2 \right\|_{\beta,\beta} 
			&\leq C \left(1+\left\| (y^1,z^1) \right\|_X^2 +\left\| (y^2,z^2) \right\|_X^2 \right)
			\left\| (y^1-y^2,z^1-z^2) \right\|_X T^\alpha.
			\end{align*}
			On the other hand using \eqref{estimate_G1infty} and \eqref{estimate_G2infty} we get
			\begin{align*}
			&\left| (\hat\delta_2\Xi^{(y)}(y^1,z^1))_{vmu} - (\hat\delta_2\Xi^{(y)}(y^2,z^2))_{vmu} \right| \\
			&\leq  C \ltn \omega \rtn_{\alpha} 
			\left(\ltn y^1\rtn_{\beta,\beta} + \ltn y^2\rtn_{\beta,\beta} + \ltn y^2\rtn_{\beta,\beta} \left\| y^2\right\|_{\infty} \right) \left\|y^1-y^2 \right\|_{\infty} u^{-\beta} \left(v-u \right)^{\alpha+\beta} \\
			& + C \left\| z^1\right\|_{\alpha+\beta,\beta}\left( \left\| y^1-y^2\right\|_{\infty} + \left\| y^2\right\|_{\infty} \left\|y^1-y^2 \right\|_{\infty} \right) u^{-\beta} \left(v-u \right)^{\alpha+\beta} \\
			& + C\left\|z^1-z^2 \right\|_{\alpha+\beta,\beta} \left\| y^2\right\|_{\infty} u^{-\beta}\left(v-u \right)^{\alpha+\beta}\\
			&\leq C \left(1+\left\| (y^1,z^1) \right\|_X^2 +\left\| (y^2,z^2) \right\|_X^2 \right) \left\|(y^1-y^2,z^1-z^2)\right\|_X
			u^{-\beta}\left(v-u \right)^{\alpha+\beta}.
			\end{align*}
			Hence we can apply Corollary \ref{corollary_sewing_IXiestimate} and obtain
			\begin{align*}
			&\left| (\hat\delta (\yb^1-\yb^2))_{ts} - \omega^S_{ts}(G(y^1_s)-G(y^2_s)) \right| \\ 
			&\leq C \left(1+\left\| (y^1,z^1) \right\|_X^2 +\left\| (y^2,z^2) \right\|_X^2 \right) \left\| (y^1-y^2,z^1-z^2) \right\|_X 
			s^{-\beta} \left(t-s\right)^{\alpha+\beta}.
			\end{align*}
		\qed \\ \end{proof}

		
		\begin{lemma}[estimate for $\delta z$-integral]\label{lemma_z_contraction}
			Let $(y^1,z^1),\mbox{ and }(y^2,z^2) \in X_{\omega,T}$. Then the following estimates are valid
			\begin{align}
			\begin{split}
			\left|(\zb^1_{ts}-\zb^2_{ts})(E)\right| 
			\leq C &|E| 
			\left(1+\left\| (y^1,z^1) \right\|_X^2 +\left\| (y^2,z^2) \right\|_X^2 \right) \left\| (y^1-y^2,z^1-z^2) \right\|_X \\
			&\left[\left(t-s \right)^{2\alpha} + s^{\alpha-\beta} \left(t-s \right)^{\alpha+\beta}\right],
			\end{split} \label{estimate_deltaz}\\
			\left\| \zb^1-\zb^2 \right\|_{\alpha+\beta} 
			\leq C &\left(1+\left\| (y^1,z^1) \right\|_X^2 +\left\| (y^2,z^2) \right\|_X^2 \right) \left\| (y^1-y^2,z^1-z^2) \right\|_X T^{\alpha-\beta}.
			\label{estimate_deltaz_norm}
			\end{align}
		\end{lemma}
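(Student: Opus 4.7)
The plan is to mirror the proof of Lemma \ref{lemma_z}, replacing each quantity by its corresponding difference and invoking the sharpened contraction bounds of Lemma \ref{lemma_y_contraction} in place of the a priori bounds of Lemma \ref{lemma_y}. Writing $\Xi^{(z),i}_{vu}(E) := b_{vu}(E, G(y^i_u)) + a_{vu}(E, \yb^i_u)$ for $i=1,2$, the bilinearity of $a$ and $b$ in their second arguments gives $(\Xi^{(z),1} - \Xi^{(z),2})_{vu}(E) = b_{vu}(E, G(y^1_u) - G(y^2_u)) + a_{vu}(E, \yb^1_u - \yb^2_u)$. First I would bound the $b$-term via \eqref{estimate_b} combined with the Lipschitz continuity of $G\colon W\to \aL(V, D_\beta)$ (applied to $y^1_u - y^2_u$), and the $a$-term via \eqref{estimate_a} combined with \eqref{estimate_deltay_norm}. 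This produces a uniform $C|E|\,R\,(v-u)^\alpha$ bound, where $R := (1+\|(y^1,z^1)\|_X^2 + \|(y^2,z^2)\|_X^2)\|(y^1-y^2, z^1-z^2)\|_X$ is the canonical contraction prefactor.

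Next, using the algebraic identities \eqref{algebraic_a} and \eqref{algebraic_b} exactly as in Lemma \ref{lemma_z}, one has $\hat\delta_2 \Xi^{(z),i}_{vmu}(E) = a_{vm}(E, \omega^S_{mu}(G(y^i_u)) - (\hat\delta \yb^i)_{mu}) + b_{vm}(E, G(y^i_u) - G(y^i_m))$. I would take the difference in $i$ and treat the two pieces separately. The $a_{vm}$-piece reduces to controlling $\omega^S_{mu}(G(y^1_u)-G(y^2_u)) - (\hat\delta(\yb^1-\yb^2))_{mu}$, which the sharpened estimate \eqref{estimate_deltay_help1} handles by $R\, s^{-\beta}(t-s)^{\alpha+\beta}$; the complementary crude bound comes from \eqref{estimate_deltay_hatdelta}. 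For the $b_{vm}$-piece I apply the bilinear splitting $G(y^1_u)-G(y^1_m)-G(y^2_u)+G(y^2_m)$ and use the first- and second-order Taylor-type estimates already exploited in Lemma \ref{lemma_y_contraction} (cf.~\eqref{estimate_G1beta}, \eqref{estimate_G1infty}, \eqref{estimate_G2infty}) together with \eqref{estimate_b}. As in Lemma \ref{lemma_z} this yields two complementary bounds for $\hat\delta_2(\Xi^{(z),1}-\Xi^{(z),2})_{vmu}(E)$, of the form $u^{-\beta}(v-u)^{2\alpha+\beta}$ and $(v-u)^{2\alpha}$, each carrying the prefactor $C|E|\,R$.

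Applying Corollary \ref{corollary_sewing_IXiestimate} to the linear functional $\aI(\Xi^{(z),1}-\Xi^{(z),2})$ (which is well-defined by the linearity asserted in Remark \ref{remark_sewing_linearity}) then yields $|(\hat\delta\aI(\Xi^{(z),1} - \Xi^{(z),2}))_{ts}(E) - (\Xi^{(z),1} - \Xi^{(z),2})_{ts}(E)| \leq C|E|\,R\,(t-s)^{2\alpha}$. As in the conclusion of Lemma \ref{lemma_z}, it remains to estimate $|(\Xi^{(z),1} - \Xi^{(z),2})_{ts}(E) - \omega^S_{ts}(E(\yb^1_s - \yb^2_s))|$: I would invoke \eqref{estimate_a2} with $x = \yb^1_s - \yb^2_s \in D_\beta$ together with \eqref{estimate_deltay_beta} to obtain the $s^{\alpha-\beta}(t-s)^{\alpha+\beta}$ contribution, and \eqref{estimate_b} with the Lipschitz continuity of $G$ into $\aL(V, D_\beta)$ for the $(t-s)^{2\alpha}$ contribution. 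Summing the two pieces gives \eqref{estimate_deltaz}; absorbing an $s^\beta$ factor and taking the supremum yields \eqref{estimate_deltaz_norm}.

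The main obstacle is the systematic bookkeeping of the bilinear splittings: each difference of the form $F(y^1)u^1 - F(y^2)u^2$ must be decomposed as $F(y^1)(u^1-u^2) + (F(y^1)-F(y^2))u^2$ so that the quadratic factor in $\|(y^i,z^i)\|_X$ and the linear factor $\|(y^1-y^2,z^1-z^2)\|_X$ separate cleanly. One must simultaneously preserve the correct singular weight ($u^{-\beta}$ or $u^{-2\beta}$) and H\"older exponent needed to feed into Theorem \ref{lemma_sewing} and Corollary \ref{corollary_sewing_IXiestimate}. The $b$-piece is the most delicate, since the second-order Taylor identity for $G$ must be applied in both its ``size'' and its ``difference'' forms, each weighted against the appropriate modified H\"older seminorm of $y^1, y^2$ inherited from $X_{\omega,T}$.
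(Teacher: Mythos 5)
Your proposal follows the paper's proof essentially step by step: you use the same bilinear split of $\Xi^{(z)}(y^1,\yb^1)-\Xi^{(z)}(y^2,\yb^2)$, the same algebraic identities \eqref{algebraic_a}, \eqref{algebraic_b} for $\hat\delta_2$, invoke \eqref{estimate_deltay_help1}/\eqref{estimate_deltay_hatdelta} and \eqref{estimate_G1beta} exactly where the paper does, and close via Corollary~\ref{corollary_sewing_IXiestimate} together with \eqref{estimate_a2}, \eqref{estimate_b} and \eqref{estimate_deltay_beta}. The only cosmetic deviation is in the complementary $(v-u)^{2\alpha}$ bound for the $b$-piece, where the paper uses a crude triangle-inequality split $|G(y^1_u)-G(y^2_u)|+|G(y^1_m)-G(y^2_m)|$ rather than \eqref{estimate_G1infty}/\eqref{estimate_G2infty}, but this does not change the argument or the resulting exponent.
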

		\begin{proof}
			Recall that
			\begin{align*}
			(\zb^1_{ts}-\zb^2_{ts})(E) 
			&= (\hat\delta\aI\Xi^{(y)}(y^1,\yb^1))_{ts}(E) -\omega^S_{ts}(E \yb^1_s) 
			- (\hat\delta\aI\Xi^{(z)}(y^2,\yb^2))_{ts}(E) + \omega^S_{ts}(E \yb^2_s) \\
			&=(\hat\delta\aI[\Xi^{(z)}(y^1,\yb^1)-\Xi^{(z)}(y^2,\yb^2)])_{ts}(E) 
			- \left(\omega^S_{ts}(E \yb^1_s)- \omega^S_{ts}(E \yb^2_s) \right). 
			\end{align*}
			Building the difference of $\Xi^{(z)}$ for $(y^{1}, \overline{y}^{1})$  and $(y^{2},\overline{y}^{2})$ entails
			\begin{align*}
			&\left|\Xi^{(z)}(y^1,\yb^1)_{vu}(E)-\Xi^{(z)}(y^2,\yb^2)_{vu}(E) \right| \\
			&\leq \left| b_{vu}(E,G(y^1_u)-G(y^2_u)) \right| + \left| a_{vu}(E,\yb^1_u-\yb^2_u) \right| \\
			&\leq C \left|E \right| \left(\ltn \omega\rtn_\alpha^2 
			+ \left\|\omegaa \right\|_{2\alpha} \right) \left\|y^1-y^2 \right\|_{\infty} \left(v-u \right)^{2\alpha}
			+ C \left| E \right| \ltn \omega\rtn_\alpha \left\|\yb^1-\yb^2 \right\|_{\infty} \left(v-u \right)^{\alpha}.
			\end{align*}
			By \eqref{estimate_deltay_norm} we get
			\begin{align*}
			&\left|\Xi^{(z)}(y^1,\yb^1)_{vu}(E)-\Xi^{(z)}(y^2,\yb^2)_{vu}(E) \right|\\
			&\leq C \left|E \right| \left(1+\left\| (y^1,z^1) \right\|_X^2 +\left\| (y^2,z^2) \right\|_X^2 \right) 
			\left\| (y^1-y^2,z^1-z^2) \right\|_X
			\left(v-u \right)^{\alpha}.
			\end{align*}
			Furthermore,
			\begin{align*}
			&\left|(\hat\delta_2 \Xi^{(z)}(y^1,\yb^1))_{vmu} (E) - (\hat\delta_2 \Xi^{(z)}(y^2,\yb^2))_{vmu} (E) \right|\\
			&\leq \left| a_{vm}(E,\omega^S_{mu}(G(y^1_u)-G(y^2_u))-(\hat\delta \yb^1)_{mu}+(\hat\delta \yb^2)_{mu}) \right| \\
			&+ \left| b_{vm}(E,G(y^1_u)-G(y^1_m)-G(y^2_u)+G(y^2_m)) \right|\\
			&\leq C \ltn \omega\rtn_\alpha \left| E \right| 
			\left| \omega^S_{mu}(G(y^1_u)-G(y^2_u))-(\hat\delta (\yb^1-\yb^2))_{mu}\right| \left(v-u \right)^\alpha \\
			&+ C \left(\ltn \omega\rtn_\alpha^2 + \left\|\omegaa \right\|_{2\alpha} \right) \left|E \right| \left| G(y^1_u)-G(y^1_m)-G(y^2_u)+G(y^2_m) \right| \left(v-u \right)^{2\alpha}.
			\end{align*}
			By applying \eqref{estimate_deltay_help1} and Lemma \ref{estimate_G1beta} we derive 
			\begin{align*}
			&\left|(\hat\delta_2 \Xi^{(z)}(y^1,\yb^1))_{vmu} (E) - (\hat\delta_2 \Xi^{(z)}(y^2,\yb^2))_{vmu} (E) \right|\\
			&\leq C \left| E \right|
			\left(1+\left\| (y^1,z^1) \right\|_X^2 +\left\| (y^2,z^2) \right\|_X^2 \right) \left\| (y^1-y^2,z^1-z^2) \right\|_X 
			u^{-\beta} \left(v-u \right)^{2\alpha+\beta}. 
			\end{align*}
			On the other hand we can estimate
			\begin{align*}
			&\left|(\hat\delta_2 \Xi^{(z)}(y^1,\yb^1))_{vmu} (E) - (\hat\delta_2 \Xi^{(z)}(y^2,\yb^2))_{vmu} (E) \right|\\
			&\leq C \left| E \right| 
			\left(\left| \omega^S_{mu}(G(y^1_u)-G(y^2_u)) \right| +\left|(\hat\delta (\yb^1-\yb^2))_{mu}\right| \right)
			\left(v-u \right)^\alpha \\
			&+ C \left|E \right| \left(\left| G(y^1_u) -G(y^2_u) \right| + \left|G(y^1_m) - G(y^2_m) \right| \right) 
			\left(v-u \right)^{2\alpha}.
			\end{align*}
			By applying \eqref{estimate_deltay_hatdelta} we obtain
			\begin{align*}
			&\left|(\hat\delta_2 \Xi^{(z)}(y^1,\yb^1))_{vmu} (E) - (\hat\delta_2 \Xi^{(z)}(y^2,\yb^2))_{vmu} (E) \right|\\
			\leq &C \left| E \right| 
			\left(1+\left\| (y^1,z^1) \right\|_X^2 +\left\| (y^2,z^2) \right\|_X^2 \right) \left\| (y^1-y^2,z^1-z^2) \right\|_X
			\left(v-u \right)^{2\alpha}.
			\end{align*}
			Again with Corollary \ref{corollary_sewing_IXiestimate} we conclude
			\begin{align*}
			&\left|(\hat\delta\aI[\Xi^{(z)}(y^1,\yb^1)-\Xi^{(z)}(y^2,\yb^2)])_{ts}(E)
			-(\Xi^{(z)}(y^1,\yb^1)-\Xi^{(z)}(y^2,\yb^2))_{ts}(E) \right|\\
			&\leq C \left|E\right| 
			\left(1+\left\| (y^1,z^1) \right\|_X^2 +\left\| (y^2,z^2) \right\|_X^2 \right) \left\| (y^1-y^2,z^1-z^2) \right\|_X
			\left(t-s\right)^{2\alpha}.
			\end{align*}
			Furthermore we have
			\begin{align*}
			&\left|(\Xi^{(z)}(y^1,\yb^1)-\Xi^{(z)}(y^2,\yb^2))_{ts}(E)-\omega^S_{ts}(E (\yb^1_s-\yb^2_s)) \right|\\
			&\leq \left| b_{ts}(E,G(y^1_s)-G(y^2_s)) \right| + \left| a_{ts}(E,\yb^1_s-\yb^2_s)-\omega^S_{ts}(E (\yb^1_s-\yb^2_s)) \right|.
			\end{align*}
			Then \eqref{estimate_a2} yields
			\begin{align*}
			&\left|(\Xi^{(z)}(y^1,\yb^1)-\Xi^{(z)}(y^2,\yb^2))_{ts}(E)-\omega^S_{ts}(E (\yb^1_s-\yb^2_s)) \right|\\
			&\leq C \left|E \right| \left\|y^1-y^2 \right\|_{\infty}  
			\left(\ltn \omega\rtn_\alpha^2 + \left\|\omegaa \right\|_{2\alpha} \right) \left(t-s \right)^{2\alpha}\\
			&+ C \left| E \right| \ltn \omega\rtn_\alpha \left|\yb^1_s-\yb^2_s \right|_{D_\beta} \left(t-s \right)^{\alpha+\beta}.
			\end{align*}
			By applying \eqref{estimate_deltay_beta} we see
			\begin{align*}
			&\left|(\Xi^{(z)}(y^1,\yb^1)-\Xi^{(z)}(y^2,\yb^2))_{ts}(E)-\omega^S_{ts}(E (\yb^1_s-\yb^2_s)) \right|\\
			&\leq C \left| E \right|
			\left(1+\left\| (y^1,z^1) \right\|_X^2 +\left\| (y^2,z^2) \right\|_X^2 \right) \left\| (y^1-y^2,z^1-z^2) \right\|_X
			\left[s^{\alpha-\beta} \left(t-s\right)^{\alpha+\beta} + \left(t-s \right)^{2\alpha} \right].
			\end{align*}
			Finally, we derive
			\begin{align*}
			\left|(\zb^1_{ts}-\zb^2_{ts})(E)\right| 
			&\leq \left|(\hat\delta\aI[\Xi^{(z)}(y^1,\yb^1)-\Xi^{(z)}(y^2,\yb^2)])_{ts}(E) 
			-(\Xi^{(z)}(y^1,\yb^1)-\Xi^{(z)}(y^2,\yb^2))_{ts}(E) \right| \\
			&+ \left|(\Xi^{(z)}(y^1,\yb^1)-\Xi^{(z)}(y^2,\yb^2))_{ts}(E)-\omega^S_{ts}(E (\yb^1_s-\yb^2_s)) \right|\\
			&\leq C |E| 
			\left(1+\left\| (y^1,z^1) \right\|_X^2 +\left\| (y^2,z^2) \right\|_X^2 \right) \left\| (y^1-y^2,z^1-z^2) \right\|_X \\
			&\phantom{+}\left[\left(t-s \right)^{2\alpha} + s^{\alpha-\beta} \left(t-s \right)^{\alpha+\beta}\right].
			\end{align*}
			Consequently, we get
			\begin{align*}
			\left\| \zb^1-\zb^2 \right\|_{\alpha+\beta} 
			\leq C &\left(1+\left\| (y^1,z^1) \right\|_X^2 +\left\| (y^2,z^2) \right\|_X^2 \right) \left\| (y^1-y^2,z^1-z^2) \right\|_X T^{\alpha-\beta}.
			\end{align*}
		\qed \\ \end{proof}
	
		Now, putting all these results together, we can state the main theorem of this section.
		\begin{theorem}
			Let $r>0$ with $|\xi|\leq r$. 
			Then there exist $R=R(r,\omega)$ and  $T=T(\omega,R)>0$ such that the mapping
			$\aM_{T,R}:=\aM_T|_{B_{X}(0,R)} \colon B_{X}(0,R) \to B_{X}(0,R) $ is a contraction and possesses a unique fixed point.
		\end{theorem}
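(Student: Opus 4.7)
The strategy is a standard Banach fixed-point argument on a closed ball in $X_{\omega,T}$; all the analytic work is already contained in Theorem~\ref{theorem_M} and Lemmas~\ref{lemma_y_contraction}--\ref{lemma_z_contraction}. The only task is to choose $R$ and $T$ consistently so that (i) $\aM_T$ maps $\overline{B}_X(0,R)$ into itself and (ii) $\aM_T$ is a strict contraction there.

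First I would fix $R$ as a function of $r$. By Theorem~\ref{theorem_M}, for every $(y,z) \in \overline{B}_X(0,R)$ one has
\begin{align*}
\|\aM_T(y,z)\|_X \leq C\bigl(|\xi| + (1+R^2)T^\alpha\bigr) \leq C r + C(1+R^2) T^\alpha,
\end{align*}
with $C=C(\omega,\omegaa,\alpha,\beta,S,G)$ independent of $\xi$ and $T$. Setting $R := 2Cr + 1$ (which depends on $r$ and on $\omega$ through $C$) I would then impose $T$ small enough that $C(1+R^2)T^\alpha \leq R/2$, equivalently
\begin{align*}
T \leq \Bigl(\tfrac{R}{2C(1+R^2)}\Bigr)^{1/\alpha}.
\end{align*}
This guarantees $\aM_T(\overline{B}_X(0,R)) \subset \overline{B}_X(0,R)$; the algebraic relation $(\hat\delta_2 \widetilde z)_{t\tau s} = \omega^S_{t\tau}(\cdot\,(\delta \widetilde y)_{\tau s})$ required for membership in $X_{\omega,T}$ was already verified at the end of the proof of Theorem~\ref{theorem_M}.

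Next I would handle the contraction. Since $\widetilde y^i - S(\cdot)\xi = \overline y^i$ and $\widetilde z^i_{ts}(E) - a_{ts}(E,S(s)\xi) + \omega^S_{ts}(E\,S(s)\xi) = \overline z^i_{ts}(E)$, the $\xi$-dependent contributions cancel in the difference $\aM_T(y^1,z^1) - \aM_T(y^2,z^2)$. Combining \eqref{estimate_deltay_norm} with \eqref{estimate_deltaz_norm}, and arguing as in Theorem~\ref{theorem_M} to pass from the $\overline z$-norm $\|\cdot\|_{\alpha+\beta}$ to both $\|\cdot\|_\alpha$ and $\|\cdot\|_{\alpha+\beta,\beta}$ at the cost of an extra factor $T^\beta$, I would obtain
\begin{align*}
\|\aM_T(y^1,z^1)-\aM_T(y^2,z^2)\|_X
\leq C\,(1+2R^2)\,T^{\alpha-\beta}\,\|(y^1-y^2,z^1-z^2)\|_X.
\end{align*}
Here I use $\alpha-\beta > 0$, which follows from the standing assumption $\beta < \alpha$. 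Further shrinking $T$ so that $C(1+2R^2)T^{\alpha-\beta} \leq 1/2$ yields a strict contraction constant $1/2$ on $\overline{B}_X(0,R)$.

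Finally, since $\overline{B}_X(0,R)$ is a closed subset of the Banach space $X_{\omega,T}$, Banach's fixed-point theorem produces a unique $(y,z) \in \overline{B}_X(0,R)$ with $\aM_{T,R}(y,z) = (y,z)$, which by construction is the local mild solution of~\eqref{eq1}. I do not expect a genuine obstacle here: all quantitative work is encoded in the previously proved estimates, and the sole delicate point is to choose $R$ \emph{before} $T$ (so that $R$ depends on $r$ and $\omega$ but not on $T$) and then to pick $T$ small enough to absorb the quadratic dependence on $R$ in both the self-mapping and contraction constants.
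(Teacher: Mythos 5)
Your proposal is correct and follows the paper's proof essentially verbatim: fix $R$ proportional to $r$ via the self-mapping bound from Theorem~\ref{theorem_M}, shrink $T$ for invariance, then use Lemmas~\ref{lemma_y_contraction} and~\ref{lemma_z_contraction} together with the identities $\widetilde y^1-\widetilde y^2=\overline y^1-\overline y^2$ and $\widetilde z^1-\widetilde z^2=\overline z^1-\overline z^2$ (which discard the $\xi$-dependent terms exactly as you note) and a further shrink of $T$ to obtain a contraction, whence Banach's fixed-point theorem applies. Only cosmetic differences remain, e.g.\ the paper takes $R=2Cr$ and obtains the contraction bound with factor $T^\alpha$ rather than the $T^{\alpha-\beta}$ in your display (after paying the extra $T^\beta$ you describe, the exponent should indeed be $\alpha$), but either positive power of $T$ yields the same conclusion.
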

		\begin{proof}
			By Theorem \ref{theorem_M} we know that $\mathcal{M}_{T}$ maps $X_{\omega, T}$ into itself and
			\begin{align*}
			\left\|\aM_T(y,z)\right\|_{X} \leq C \left(\left|\xi\right| + \left(1+\left\|(y,z) \right\|_X^2\right)T^\alpha \right).
			\end{align*}
			Setting $R:=2Cr$, we have
			\begin{align*}
			\left\|\aM_{T,R}(y,z)\right\|_{X} \leq \frac{R}{2} + C \left(1+R^2\right) T^\alpha.
			\end{align*}
			Hence we can choose $T$ small enough and obtain
			\begin{align*}
			\left\|\aM_{T,R}(y,z)\right\|_{X} \leq R,
			\end{align*}
			which means that $\aM_{T,R}$ maps $B_{X}(0,R)$ into itself.\\
			
			Since $\yt^1-\yt^2=\yb^1-\yb^2$ and $\zt^1-\zt^2 = \zb^1-\zb^2$, applying Lemmas \ref{lemma_y_contraction} and \ref{lemma_z_contraction} we derive
			\begin{align*}
			\left\|\aM_{T}(y^1,z^1)- \aM_{T}(y^2,z^2)\right\|_{X}
			\leq C \left(1+\left\| (y^1,z^1) \right\|_X^2 +\left\| (y^2,z^2) \right\|_X^2 \right)
			\left\| (y^1-y^2,z^1-z^2) \right\|_X T^\alpha.
			\end{align*}
			Hence,
			\begin{align*}
			\left\|\aM_{T,R}(y^1,z^1)- \aM_{T,R}(y^2,z^2)\right\|_{X}
			\leq C \left(1+ 2 R^2\right)
			\left\| (y^1-y^2,z^1-z^2) \right\|_X T^\alpha.
			\end{align*}
			Again, we can choose $T$ small enough such that
			\begin{align*}
			\left\|\aM_{T,R}(y^1,z^1)- \aM_{T,R}(y^2,z^2)\right\|_{X}
			\leq \frac{1}{2} \left\| (y^1-y^2,z^1-z^2) \right\|_X,
			\end{align*}
			which proves the contraction property of $\mathcal{M}_{T,R}$.
			Consequently, Banach's fixed-point Theorem entails that $\mathcal{M}_{T,R}$ has a unique fixed point in $B_{X}(0,R)$.
		\qed \end{proof}\\

	We showed the existence of a unique local solution in an appropriate ball. This means that another local mild solution for (\ref{eq1}) could exist outside this ball. The following theorem excludes this case. In order to prove this statement we need some additional results. 
	For the existence proof we considered a fixed $\omega \in C^\alpha$ and a fixed initial condition $\xi \in W$.
     From now on we want to investigate the dependence of the solution on these parameters. Therefore we introduce the following notation 
		\begin{align*}
		\aM_{T,\omega ,\xi} &\colon X_{\omega,T} \to X_{\omega,T} \\
		\aM_{T,\omega, \xi}(y,z)^{(1)}_t &= S(t) \xi + \aI\Xi_\omega^{(y)}(y,z)_t \\
		\aM_{T,\omega, \xi}(y,z)^{(2)}_{ts}(E) &= \left(\hat\delta_1 \aI\Xi_\omega^{(z)}(y,y) \right)_{ts}(E) - \omega^S_{ts}(E y_s).
		\end{align*}
For notational simplicity we further set for $\tau>0$
\begin{align}
\theta_{\tau}y_t &:= y_{t+\tau} \\
\theta_{\tau}z_{ts}&:= z_{t+\tau,s+\tau}.
\end{align}
The next assertion provides the connection between \eqref{shift:process}, \eqref{shift5} and the expressions introduced above.
\begin{lemma}\label{lemma_shift_Xi}
	Let $\tau>0$. Then the following identities hold true
	\begin{align*}
	\theta_{\tau} \Xi_{\omega}^{(y)}(y,z)
	&=\Xi_{\theta_{\tau}\omega}^{(y)}(\theta_{\tau }y,\theta_{\tau }z), \\
		\theta_{\tau} \Xi_{\omega}^{(z)}(y,y)
	&=\Xi_{\theta_{\tau}\omega}^{(z)}(\theta_{\tau} y,\theta_{\tau}y). 
	\end{align*}
\end{lemma}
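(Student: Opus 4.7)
The plan is to verify both identities by a direct unpacking of the definitions of $\Xi^{(y)}_\omega$ and $\Xi^{(z)}_\omega$ given in Section~\ref{sectfp}, combined with the shift covariance of the supporting processes spelled out in Remark~\ref{remark_supportingprocesses_shift}, namely $\omega^S_{t+\tau,s+\tau}=(\theta_\tau\omega)^S_{ts}$, $a_{t+\tau,s+\tau}=\theta_\tau a_{ts}$, and $b_{t+\tau,s+\tau}=\theta_\tau b_{ts}$, where the right-hand sides are the supporting processes built from the time-shifted noise $\theta_\tau\omega$ and its associated L\'evy area. Since $G$ and $DG$ act pointwise, all other dependencies on $y$ are moved through the shift by $y_{u+\tau}=(\theta_\tau y)_u$.

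For the first identity I would, using these definitions, write
\begin{align*}
(\theta_\tau\Xi^{(y)}_\omega(y,z))_{vu}
&= \Xi^{(y)}_\omega(y,z)_{v+\tau,u+\tau} \\
&= \omega^S_{v+\tau,u+\tau}(G(y_{u+\tau})) + z_{v+\tau,u+\tau}(DG(y_{u+\tau})),
\end{align*}
and then replace $\omega^S_{v+\tau,u+\tau}$ by $(\theta_\tau\omega)^S_{vu}$, $z_{v+\tau,u+\tau}$ by $(\theta_\tau z)_{vu}$, and $y_{u+\tau}$ by $(\theta_\tau y)_u$; reading off $\Xi^{(y)}_{\theta_\tau\omega}(\theta_\tau y,\theta_\tau z)_{vu}$ from its definition yields the claim. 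The second identity is handled in exactly the same way: unpack
\begin{align*}
(\theta_\tau\Xi^{(z)}_\omega(y,y))_{vu}(E) = b_{v+\tau,u+\tau}(E,G(y_{u+\tau})) + a_{v+\tau,u+\tau}(E,y_{u+\tau}),
\end{align*}
then apply the shift identities for $a$ and $b$, together with $y_{u+\tau}=(\theta_\tau y)_u$, to recover $\Xi^{(z)}_{\theta_\tau\omega}(\theta_\tau y,\theta_\tau y)_{vu}(E)$.

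The only step that is not a pure tautology is the shift identity for $b$, since $b$ was constructed via the Sewing Lemma in Lemma~\ref{reg_f} rather than by a direct integration-by-parts formula; this is where I would spend the most care. Two equivalent routes are available. One is to invoke Remark~\ref{remark_supportingprocesses_shift} directly, justifying it as follows: for a smooth approximation $\omega^n$ the process $b^n$ in~\eqref{bn} is a genuine iterated Riemann integral, so $b^n_{v+\tau,u+\tau}=\theta_\tau b^n_{vu}$ is trivial, and passing to the limit via the convergence $b^n\to b$ from Lemma~\ref{reg_f}(ii) transfers the identity to $b$. The second route is intrinsic: the germ $\theta_\tau\Xi^{(b),s}$ coincides with the germ defining $b$ associated with the shifted noise $\theta_\tau\omega$, so Corollary~\ref{corollary_sewing_shift} together with the uniqueness in Theorem~\ref{lemma_sewing} forces the corresponding rough integrals to agree. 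With this settled, both identities in the lemma follow by the bookkeeping outlined above.
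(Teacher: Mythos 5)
Your proposal is correct and follows essentially the same route as the paper: unpack the definitions of $\Xi^{(y)}_\omega$ and $\Xi^{(z)}_\omega$, apply the shift covariance of the supporting processes from Remark~\ref{remark_supportingprocesses_shift}, and relabel $y_{u+\tau}=(\theta_\tau y)_u$, $z_{v+\tau,u+\tau}=(\theta_\tau z)_{vu}$. The paper simply cites Remark~\ref{remark_supportingprocesses_shift} for the shift identity of $b$, which is established there by the smooth approximation argument you outline as your first route.
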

\begin{proof}
	The proof is a based on Remark \ref{remark_supportingprocesses_shift}. We directly have that
\begin{align*}
\theta_{\tau} \Xi_{\omega}^{(y)}(y,z)_{vu}
&= \Xi_{\omega}^{(y)}(y,z)_{v+\tau,u+\tau}
= \omega^S_{v+\tau,u+\tau}(G(y_{u+\tau})) + z_{v+\tau,u+\tau}(DG(y_{u+\tau}))\\
&=\theta_{\tau}\omega^S_{vu}(G(\theta_{\tau} y_{u})) + \theta_{\tau} z_{vu}(DG(\theta_{\tau}y_{u}))
= \Xi_{\theta_{\tau}\omega}^{(y)}(\theta_{\tau}y,\theta_{\tau}z)_{vu},
\end{align*}
as well as
\begin{align*}
\theta_{\tau} \Xi_{\omega}^{(z)}(y,y)_{vu}(E)
&= \Xi_{\omega}^{(z)}(y,y)_{v+\tau,u+\tau}(E)
= b_{v+\tau,u+\tau}(E,G(y_{u+\tau})) + a_{v+\tau,u+\tau}(E,y_{u+\tau}) \\&= \theta_{\tau}b_{vu}(E,G(\theta_{\tau}y_{u})) + \theta_{\tau}a_{vu}(E,\theta_{\tau}y_{u})
= \Xi_{\theta_{\tau}\omega}^{(z)}(\theta_{\tau}y,\theta_{\tau}y)_{vu}(E).
\end{align*}
\qed	
\end{proof}\\
The first step in establishing the uniqueness of the local solution is contained in the next result. Note that this is referred to as the cocycle property in the theory of random dynamical systems, see \cite{Arnold}. This will be dealt with in a forthcoming work when we investigate global solutions for rough evolution equations. 
			\begin{lemma}\label{lemma_localsolution_shift1} 	Let $T>0$ and $(y,z) \in X_{\omega,T}$ be a fixed-point of $\aM_{T,\omega ,\xi}$. Then for any $\tau \in [0,T)$ there exists a fixed-point of $\aM_{T-\tau,\theta_{\tau}\omega ,y_{\tau}}$ given by
			$(\theta_{\tau}y, \theta_{\tau}z)$.
		\end{lemma}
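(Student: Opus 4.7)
The plan is to verify directly from the definitions that the pair $(\theta_{\tau}y,\theta_{\tau}z)$ satisfies both fixed-point equations associated with $\aM_{T-\tau,\theta_{\tau}\omega,y_{\tau}}$. The key ingredients are the shift property of the rough integral from Corollary~\ref{corollary_sewing_shift}, the compatibility of the time shift with the approximation germs $\Xi^{(y)}$ and $\Xi^{(z)}$ from Lemma~\ref{lemma_shift_Xi}, and the identities for the supporting processes collected in Remark~\ref{remark_supportingprocesses_shift}.

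First I would handle the $y$-component. Starting from the fixed-point identity $y_{t+\tau}=S(t+\tau)\xi+\aI\Xi^{(y)}_{\omega}(y,z)_{t+\tau}$ and subtracting $S(t)$ applied to the same identity evaluated at time $\tau$, I obtain
\begin{align*}
y_{t+\tau}=S(t)y_{\tau}+(\hat\delta\,\aI\Xi^{(y)}_{\omega}(y,z))_{t+\tau,\tau}.
\end{align*}
Applying Corollary~\ref{corollary_sewing_shift} translates the rough integral to the interval $[0,t]$ with argument $\theta_{\tau}\Xi^{(y)}_{\omega}(y,z)$, and Lemma~\ref{lemma_shift_Xi} identifies this with $\Xi^{(y)}_{\theta_{\tau}\omega}(\theta_{\tau}y,\theta_{\tau}z)$. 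Thus $(\theta_{\tau}y)_{t}=S(t)y_{\tau}+\aI\Xi^{(y)}_{\theta_{\tau}\omega}(\theta_{\tau}y,\theta_{\tau}z)_{t}$, which is precisely the first component of $\aM_{T-\tau,\theta_{\tau}\omega,y_{\tau}}(\theta_{\tau}y,\theta_{\tau}z)$.

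For the $z$-component I would shift the defining identity $z_{ts}(E)=(\hat\delta\,\aI\Xi^{(z)}_{\omega}(y,y))_{ts}(E)-\omega^{S}_{ts}(Ey_{s})$ directly. Corollary~\ref{corollary_sewing_shift} applied to the first summand, together with Remark~\ref{remark_supportingprocesses_shift} (which gives $\omega^{S}_{t+\tau,s+\tau}=\theta_{\tau}\omega^{S}_{ts}$) and Lemma~\ref{lemma_shift_Xi} applied to the germ, yield
\begin{align*}
(\theta_{\tau}z)_{ts}(E)=(\hat\delta\,\aI\Xi^{(z)}_{\theta_{\tau}\omega}(\theta_{\tau}y,\theta_{\tau}y))_{ts}(E)-(\theta_{\tau}\omega)^{S}_{ts}(E(\theta_{\tau}y)_{s}),
\end{align*}
which is the second component of $\aM_{T-\tau,\theta_{\tau}\omega,y_{\tau}}(\theta_{\tau}y,\theta_{\tau}z)$.

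Finally, I would verify that $(\theta_{\tau}y,\theta_{\tau}z)\in X_{\theta_{\tau}\omega,T-\tau}$. The H\"older seminorms transfer because the weight $s^{\beta}$ evaluated at the new origin corresponds to $(s+\tau)^{\beta}$ in the original frame, and the estimate $s^{\beta}\leq(s+\tau)^{\beta}$ allows us to dominate the shifted seminorms by the original ones on $[0,T]$. The algebraic constraint $(\hat\delta_{2}z)_{tms}=\omega^{S}_{tm}(\cdot\,(\delta y)_{ms})$ is preserved under translation because $\hat\delta_{2}$, $\delta$ and $\omega^{S}$ all commute with the shift. No genuine obstacle is anticipated: once Corollary~\ref{corollary_sewing_shift} and Lemma~\ref{lemma_shift_Xi} are in place, the entire argument reduces to careful bookkeeping of the shift operator across each term of the fixed-point identity.
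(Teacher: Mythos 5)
Your proof is correct and follows essentially the same route as the paper's: telescope the $y$-fixed-point identity into $\hat\delta$-form, invoke Corollary~\ref{corollary_sewing_shift} together with Lemma~\ref{lemma_shift_Xi} to rewrite everything in terms of $\theta_\tau\omega$ and the shifted pair, and do the analogous computation for $z$ using Remark~\ref{remark_supportingprocesses_shift}. The one addition you make — explicitly checking that $(\theta_\tau y,\theta_\tau z)\in X_{\theta_\tau\omega,T-\tau}$ via $s^\beta\le(s+\tau)^\beta$ and the shift-equivariance of $\hat\delta_2$, $\delta$, and $\omega^S$ — is omitted in the paper but is a sensible and correct piece of bookkeeping to spell out.
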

		\begin{proof}
		This is a direct consequence of Corollary \ref{corollary_sewing_shift} and Lemma \ref{lemma_shift_Xi}. By standard computations we get
			\begin{align*}
			\theta_{\tau}y_t &= y_{t+\tau} = S(t+\tau) \xi + \aI\Xi_\omega^{(y)}(y,z)_{t+\tau} \\
			&= S(t) y_{\tau} + (\hat\delta \aI\Xi_\omega^{(y)}(y,z))_{t+\tau,\tau} \\
			&= S(t) y_{\tau} +  \aI\Xi_{\theta_{\tau}\omega}^{(y)}(\theta_{\tau}y,\theta_{\tau}z)_{t}.
			\end{align*}
			Furthermore,
			\begin{align*}
			\theta_{\tau}z_{ts}(E) =z_{t+\tau,s+\tau}(E)
			&= (\hat\delta \aI \Xi_{\omega}^{(z)}(y,y))_{t+\tau,s+\tau} (E) - \omega^S_{t+\tau,s+\tau}(E y_{s+\tau})\\
			&= (\hat\delta \aI \Xi_{\theta_{\tau}\omega}^{(z)}(\theta_{\tau}y,\theta_{\tau}y))_{ts} (E) - \theta_{\tau}\omega^S_{ts}(E \theta_{\tau}y_{s}),
			\end{align*}
where we used Remark \ref{remark_supportingprocesses_shift} in the last step.
		\qed \end{proof}
		\begin{remark}\label{remark_localsolution_cut}
			If $(y,z)$ is a fixed point of $\aM_{T,\omega ,\xi}$ than for any $\tilde{T}< T$ the restriction of $(y,z)$ on $[0,\tilde{T}] \times \Delta_{\tilde{T}}$ is a fixed point of $\aM_{\tilde{T},\omega ,\xi}$. 
		\end{remark}
		Now we can state the uniqueness result of the local solution.
		\begin{theorem}
			Let $(y^i,z^i)$, $i=1,2$ be two fixed-points of $\aM_{T,\omega ,\xi}$. Then it must hold that $(y^1,z^1)=(y^2,z^2)$.
		\end{theorem}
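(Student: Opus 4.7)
The plan is to bootstrap the local contraction argument (which only yields uniqueness within balls $B_X(0,R)$) into global uniqueness on $X_{\omega,T}$ via the cocycle property of Lemma \ref{lemma_localsolution_shift1} together with the restriction property of Remark \ref{remark_localsolution_cut}. Concretely, I will run a standard connectedness argument on $[0,T]$.

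Set $R_0 := \max\{\|(y^1,z^1)\|_X, \|(y^2,z^2)\|_X\}$ and consider
\[
J := \{\tau \in [0,T] : y^1_t = y^2_t \text{ for } t \in [0,\tau] \text{ and } z^1_{ts} = z^2_{ts} \text{ for } (t,s) \in \Delta_\tau\}.
\]
First, $0 \in J$: evaluating the fixed-point equation at $t=0$ gives $y^i_0 = \xi$, and $z^i_{tt} = 0$ by definition of $X_{\omega,T}$. Closedness of $J$ in $[0,T]$ follows from the continuity of $y^i$ on $[0,T]$ and of $z^i$ on $\Delta_T$: if $\tau_n \in J$ with $\tau_n \to \tau$, the pointwise equalities pass to the limit.

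For openness, fix $\tau \in J$ with $\tau < T$. Since $y^1_\tau = y^2_\tau$, Lemma \ref{lemma_localsolution_shift1} shows that both $(\theta_\tau y^i, \theta_\tau z^i)$ are fixed points of $\aM_{T-\tau,\theta_\tau \omega, y^1_\tau}$; by Remark \ref{remark_localsolution_cut} their restrictions to $[0,\delta]$ are fixed points of $\aM_{\delta,\theta_\tau \omega, y^1_\tau}$. Since the seminorms defining $\|\cdot\|_X$ are non-increasing under restriction to sub-intervals, both restrictions still lie in $B_{X_{\theta_\tau \omega, \delta}}(0,R_0)$. Lemmas \ref{lemma_y_contraction}--\ref{lemma_z_contraction} applied to this pair yield
\begin{align*}
&\|\aM_{\delta,\theta_\tau \omega, y^1_\tau}(\theta_\tau y^1, \theta_\tau z^1) - \aM_{\delta,\theta_\tau \omega, y^1_\tau}(\theta_\tau y^2, \theta_\tau z^2)\|_{X_{\theta_\tau \omega, \delta}} \\
&\qquad \leq C(1 + 2R_0^2)\,\delta^\alpha\, \|(\theta_\tau y^1 - \theta_\tau y^2,\, \theta_\tau z^1 - \theta_\tau z^2)\|_{X_{\theta_\tau \omega, \delta}}.
\end{align*}
Choosing $\delta > 0$ small enough that $C(1 + 2R_0^2)\delta^\alpha < 1$ makes $\aM_{\delta,\theta_\tau \omega, y^1_\tau}$ a strict contraction on $B_{X_{\theta_\tau \omega, \delta}}(0,R_0)$, forcing the two restrictions to coincide on $[0,\delta]$. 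Translating back gives $\tau + \delta \in J$, so $J$ is open in $[0,T]$. Connectedness of $[0,T]$ then yields $J = [0,T]$, which proves the claim.

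The main obstacle I anticipate is ensuring that the constants in the contraction estimates can be chosen uniformly in the shift parameter $\tau$, so that a single $\delta > 0$ depending only on $R_0$, $\omega$, and $\omegaa$ suffices in the openness step. This is provided by Remark \ref{remark_supportingprocesses_shift}: the shifted supporting processes $\theta_\tau \omega^S$, $\theta_\tau a$, $\theta_\tau b$, $\theta_\tau c$ are expressed directly in terms of the originals, and their H\"older seminorms on $[0,T-\tau]$ are controlled by the corresponding seminorms of $\omega$ and $\omegaa$ on $[0,T]$.
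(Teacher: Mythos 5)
Your argument is correct and follows essentially the same route as the paper: both proofs combine Lemma \ref{lemma_localsolution_shift1} (the cocycle/shift property), Remark \ref{remark_localsolution_cut} (restriction), and the Lipschitz estimates of Lemmas \ref{lemma_y_contraction}--\ref{lemma_z_contraction} to propagate agreement past any time at which the two fixed points coincide. The paper phrases the bootstrap as a supremum-and-contradiction (set $\Tb := \sup\{\tilde T : (y^1,z^1)=(y^2,z^2) \text{ on } [0,\tilde T]\}$ and show $\Tb<T$ is impossible), whereas you phrase it as an open-and-closed connectedness argument on $[0,T]$; these are equivalent reformulations of the same idea.
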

		\begin{proof}
			We set $\Tb := \sup \left\{\tilde{T} > 0 \colon (y^1,z^1)\mid_{[0,\tilde{T}]}= (y^2,z^2)\mid_{[0,\tilde{T}]} \right\}$ and assume that $(y^1,z^1)\neq (y^2,z^2)$. Then $\Tb < T$.
		Using the continuity of the solution we have that $y^1_{\Tb}=y^2_{\Tb}$. By Lemma \ref{lemma_localsolution_shift1} we know that $(\theta_{\Tb}y^{1},\theta_{\Tb}z^{1})$ and $(\theta_{\Tb}y^{2},\theta_{\Tb}z^{2})$ are fixed points of $\aM_{T-\Tb,\theta_{\Tb}\omega ,y^1_{\Tb}}$. According to Remark \ref{remark_localsolution_cut} we can choose a small $T^* \in [0,T-\Tb] $, apply \eqref{estimate_deltay_norm} and \eqref{estimate_deltaz_norm}. This leads to
			\begin{align*}
			&\left\|(\theta_{\Tb}y^{1},\theta_{\Tb}z^{1})-(\theta_{\Tb}y^{2},\theta_{\Tb}z^{2}) \right\|_{X,T^*} \\
			&=  \left\|\aM_{T-\Tb,\theta_{\Tb}\omega ,y^1_{\Tb}} ((\theta_{\Tb}y^{1},\theta_{\Tb}z^{1})) 
			-\aM_{T-\Tb,\theta_{\Tb}\omega ,y^1_{\Tb}} ((\theta_{\Tb}y^{2},\theta_{\Tb}z^{2})) \right\|_{X,T^*} \\
		&	\leq C \left(1+\left\| (\theta_{\Tb}y^{1},\theta_{\Tb}z^{1}) \right\|_X^2 +\left\| (\theta_{\Tb}y^{2},\theta_{\Tb}z^{2}) \right\|_X^2 \right)
			\left\| (\theta_{\Tb}y^{1}-\theta_{\Tb}y^{2},\theta_{\Tb}z^{1}-\theta_{\Tb}z^{2}) \right\|_X ({T^*})^\alpha.
			\end{align*}
			If $T^*$ is sufficiently small we see that $(\theta_{\Tb}y^{1 },\theta_{\Tb}z^{1}) = (\theta_{\Tb}y^{2},\theta_{\Tb}z^{2})$ on $[0,T^*]$ which yields 
			$(y^1,z^1)=(y^2,z^2)$ on $[0,\Tb+T^*]$. Therefore, we obviously reached a contradiction with the definition of $\Tb$.
		\qed \end{proof} \\
	
		We conclude this section collecting two important results which immediately follow from the previous deliberations. We first indicate why taking more regular initial data leads to simpler arguments.		\begin{corollary}
			If $\xi\in D_{\beta}$ and $(y,z)$ is the unique fixed-point of $\mathcal{M}_{T,\omega,\xi}$ we have that $y\in C^{\beta}$ and $z\in C^{\alpha+\beta}$.
		\end{corollary}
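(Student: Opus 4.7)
The plan is to redo the fixed-point argument of Section \ref{sectfp} in unweighted H\"older spaces and then invoke uniqueness of the fixed point to identify the resulting solution with $(y,z)$. Concretely, introduce
\begin{align*}
\widetilde{X}_{\omega,T}:=\Big\{(y,z):~&y\in C^{\beta}([0,T],W),~ z\in C^{\alpha}(\Delta_T,\aL(\aL(W\otimes V,W),W))\\
&\cap C^{\alpha+\beta}(\Delta_T,\aL(\aL(W\otimes V,W),W)),~(\hat\delta_2 z)_{t\tau s}=\omega^S_{t\tau}(\cdot\,(\delta y)_{\tau s})\Big\},
\end{align*}
endowed with the natural unweighted norm. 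Clearly $\widetilde{X}_{\omega,T}\hookrightarrow X_{\omega,T}$ continuously, so any fixed-point of $\aM_{T,\omega,\xi}$ in $\widetilde{X}_{\omega,T}$ is in particular a fixed-point in $X_{\omega,T}$.

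The key input is that for $\xi\in D_\beta$ one has $S(\cdot)\xi\in C^\beta([0,T],W)$: indeed, by \eqref{hg2},
\begin{align*}
|S(t)\xi-S(s)\xi|=|(S(t-s)-\mbox{Id})S(s)\xi|\leq C(t-s)^\beta|\xi|_{D_\beta}.
\end{align*}
This removes the necessity of the time-weight $s^\beta$ that was needed in Lemma \ref{betabeta}. As a consequence, first I would re-derive the analogues of Lemmas \ref{lemma_y}--\ref{lemma_z_contraction} in $\widetilde{X}_{\omega,T}$: with $y\in C^\beta$ the bound $|y_u-y_m|\leq\ltn y\rtn_\beta(m-u)^\beta$ carries no singular factor $u^{-\beta}$, so the analysis of $(\hat\delta_2\Xi^{(y)})_{vmu}$ and $(\hat\delta_2\Xi^{(z)})_{vmu}$ gives
\begin{align*}
|(\hat\delta_2\Xi^{(y)})_{vmu}|\leq C(1+\|(y,z)\|_{\widetilde X}^2)(v-u)^{\alpha+2\beta},\quad |(\hat\delta_2\Xi^{(z)})_{vmu}(E)|\leq C|E|(1+\|(y,z)\|_{\widetilde X}^2)(v-u)^{2\alpha},
\end{align*}
to which Theorem \ref{lemma_sewing} and Corollary \ref{corollary_sewing_IXiestimate} apply with $\beta=0$, yielding $\bar y\in C^\alpha\subset C^\beta$ and $\bar z\in C^{\alpha+\beta}$. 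The remaining contributions $a_{ts}(E,S(s)\xi)-\omega^S_{ts}(ES(s)\xi)$ and $S(\cdot)\xi$ appearing in $\widetilde y,\widetilde z$ are handled using $\xi\in D_\beta$ together with \eqref{estimate_a2}, giving
\begin{align*}
|a_{ts}(E,S(s)\xi)-\omega^S_{ts}(ES(s)\xi)|\leq C|E|\ltn\omega\rtn_\alpha|\xi|_{D_\beta}(t-s)^{\alpha+\beta},
\end{align*}
which is exactly the $C^{\alpha+\beta}$ regularity required for $\widetilde z$.

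Next I would rerun the contraction estimates verbatim in $\widetilde{X}_{\omega,T}$; the bounds are structurally identical but cleaner (no weights), and shrinking $T$ as before yields a unique fixed-point $(\widetilde y,\widetilde z)\in \widetilde{X}_{\omega,T}$ of $\aM_{T,\omega,\xi}$. Since $\widetilde{X}_{\omega,T}\hookrightarrow X_{\omega,T}$ and the fixed point of $\aM_{T,\omega,\xi}$ in $X_{\omega,T}$ is unique (possibly after shrinking $T$ to match both balls), we conclude $(y,z)=(\widetilde y,\widetilde z)$, and hence $y\in C^\beta$ and $z\in C^{\alpha+\beta}$.

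The main obstacle is bookkeeping rather than conceptual: one has to check that every use of the weight $u^{-\beta}$ or $s^{-\beta}$ in the proofs of Lemmas \ref{lemma_y}--\ref{lemma_z_contraction} is genuinely generated by the choice of the ambient space $C^{\beta,\beta}$ (and thus disappears in $C^\beta$), and that the Sewing Lemma applies with the weaker hypothesis. Once this is verified, the rest is a direct transcription of Section \ref{sectfp}, and the uniqueness of the fixed-point closes the argument without any new analytic estimate.
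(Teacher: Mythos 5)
Your proposal takes a genuinely different route from the paper. The paper's proof is a three-line \emph{bootstrap}: since $(y,z)$ is already known to be a fixed point in $X_{\omega,T}$, one simply writes
\begin{align*}
(\delta y)_{ts} = (\hat\delta\overline{y})_{ts} + (S(t-s)-\mbox{Id})\overline{y}_s + (S(t)-S(s))\xi, \qquad
\widetilde z_{ts}(E)=\overline{z}_{ts}(E)+a_{ts}(E,S(s)\xi)-\omega^S_{ts}(ES(s)\xi),
\end{align*}
and feeds in the already-established estimates \eqref{estimate_y_hatdelta}, \eqref{estimate_y_beta}, \eqref{estimate_z_norm}, \eqref{estimate_a2} together with $|S(s)\xi|_{D_\beta}\leq C|\xi|_{D_\beta}$. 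No new fixed-point argument is run; the extra regularity is extracted a posteriori from the fixed-point relation, and the conclusion holds on the entire $[0,T]$ unchanged. Your proposal instead re-runs the whole of Section \ref{sectfp} in an unweighted ambient space $\widetilde X$, which is far more work (all of Lemmas \ref{lemma_y}--\ref{lemma_z_contraction} and Theorem \ref{theorem_M} redone), and only buys you the conclusion the paper already gets for free.

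Beyond the inefficiency, there is a genuine gap. Your contraction argument in $\widetilde X$ yields a fixed point of $\aM_{T',\omega,\xi}$ for some $T'$ that depends on the contraction constant in the \emph{unweighted} norm and has no a priori relation to the $T$ of the original theorem -- you acknowledge this with ``(possibly after shrinking $T$ to match both balls)''. The uniqueness theorem then only lets you identify the two solutions on $[0,\min(T,T')]$, so you obtain $y\in C^\beta([0,T'])$ and $z\in C^{\alpha+\beta}(\Delta_{T'})$ with $T'$ possibly strictly smaller than $T$. The corollary claims regularity on $[0,T]$. To close this you would need an additional covering/iteration argument: observe that $y_{T'}\in D_\beta$ (from $\overline{y}_{T'}\in D_\beta$ via Corollary \ref{corollary_sewing_epsestimate} and $S(T')\xi\in D_\beta$), invoke the shift (Lemma \ref{lemma_localsolution_shift1}) to restart at time $T'$, and patch together finitely many intervals. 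None of this appears in your proof. Two smaller slips: the Sewing Lemma together with Corollary \ref{corollary_sewing_deltaestimate} yields $\overline{y}\in C^\gamma$ for every $\gamma<\alpha$, not $\overline{y}\in C^\alpha$ as written (this is harmless since you only need $\gamma=\beta<\alpha$); and, even in the unweighted setting, the bound $|\overline{y}_s|_{D_\beta}\leq Cs^{\alpha-\beta}$ survives, so that factor does not disappear -- it is merely bounded by $T^{\alpha-\beta}$ -- which is exactly what the paper's short proof exploits.
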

	
	\begin{proof}
		Since
		\begin{align*}
		(\delta y)_{ts}= (\delta \widetilde{y})_{ts}= (\hat{\delta}\overline{y})_{ts} + (S(t-s)-\mbox{Id}) \overline{y}_{s} + (S(t)-S(s))\xi.
				\end{align*}
Using \eqref{estimate_y_hatdelta} and \eqref{estimate_y_beta} we conclude that
\begin{align*}
\ltn y \rtn_{\beta}\leq C |\xi|_{D_{\beta}} + C (1+ ||(y,z)||^{2}_{X} ) T ^{\alpha-\beta}.
\end{align*}
Recall that
\begin{align*}
\zt_{ts}(E) 
&= \zb_{ts}(E) +a_{ts}(E,S(s)\xi) - \omega^S_{ts}(E S(s) \xi).
\end{align*}
Therefore applying \eqref{estimate_z_norm} and \eqref{estimate_a2} proves the statement.
		\qed
		\end{proof}

\begin{remark}
	Keeping Lemma \ref{lemma:cont:dependence1}
		and  \eqref{convergence_b} in mind one can easily show that the solution continuously depends on the noisy input. 
\end{remark}
	\section{An application}\label{sect:app}
	We finally indicate an example for the abstract theory proven above. For further applications consult \cite[Section 5]{GarridoLuSchmalfuss1} and \cite[Section 7]{GarridoLuSchmalfuss2}. 
	
	\begin{example}
		We consider an open bounded $C^{2}$-domain $\mathcal{O}\in\mathbb{R}^{d},$ for $d\geq 1$. Furthermore, 
		we let $A$ stand for the Laplace operator or for second order uniformly elliptic operator augmented by Dirichlet boundary conditions. Then we know that $A$ generates an analytic $C_{0}$-semigroup on $W:=L^{2}(\mathcal{O})$. Moreover, we can identify the domains of the fractional powers of $A$ with Sobolev-Slobodetski spaces depending on the range of $\theta$. We have according to Theorem 16.12 in \cite{Yagi} that 
		\begin{align*}
		D( (-A)^{\theta})=\begin{cases}
		H^{2\theta}(\mathcal{O}), ~~ 0\leq \theta<1/4\\
		H^{2\theta}_{D}(\mathcal{O}), ~~ 1/4<\theta\leq 1.
		\end{cases}
		\end{align*}
		Here $H_{D}$ stands for the Sobolev space that incorporates the boundary conditions, in particular $D(-A)=H^{2}(\mathcal{O})\cap H^{1}_{0}(\mathcal{O})$.\\
		
		Having stated the assumptions on the linear part we now focus on $G$. Therefore we firstly set for simplicity $V:=L^{2}(\mathcal{O})$. Let $g:\overline{\mathcal{O}}\times \mathbb{R}\to \mathbb{R}$ be a three times continuously differentiable function with bounded derivatives which is zero on $\{0,1\}\times\mathbb{R}$. We interpret $g$ as the kernel of the following integral operator
		\begin{align}\label{example:G}
		G(\phi)(\psi)[x]:=\int\limits_{\mathcal{O}} g (x,\phi(\widetilde{x})) \psi(\widetilde{x})~ d\widetilde{x}.
		\end{align}
		As in \cite[Section XVII.3]{FA} one can show that $G$ is three times continuously Fr\`echet-differentiable and compute the derivatives as follows
		\begin{align*}
		DG(\phi)(\psi,h_{1})[x]&=\int\limits_{\mathcal{O}} D_{2}g (x,\phi(\widetilde{x}))\psi(\widetilde{x}) h_{1}(\widetilde{x}) ~d\widetilde{x},\\
		D^{2}G(\phi)(\psi,h_{1},h_{2})[x]&=\int\limits_{\mathcal{O}} D^{2}_{2}g (x,\phi(\widetilde{x}))\psi(\widetilde{x}) h_{1}(\widetilde{x})h_{2}(\widetilde{x}) ~d\widetilde{x},\\
			D^{3}G(\phi)(\psi,h_{1},h_{2},h_{3})[x]&=\int\limits_{\mathcal{O}} D^{3}_{2}g (x,\phi(\widetilde{x}))\psi(\widetilde{x}) h_{1}(\widetilde{x})h_{2}(\widetilde{x}) h_{3}(\widetilde{x}) ~d\widetilde{x},
		\end{align*}
		for $h_{1}$, $h_{2}$ and $h_{3}$ belonging to $W$. Due to the assumptions on $g$, these expressions are obviously bounded.\\
		
		It is left to show that 
		$G:W\to \mathcal{L}(W,D_{\beta})$ is Lipschitz continuous. Here $\beta\geq 1/3$ as assumed in (G). To this aim let $\psi^{1}$ and $\psi^{2}\in W $ and compute
		\begin{align*}
		 |G(\phi^{1}) - G (\phi^{2})|_{\mathcal{L}(W,D_{\beta})} &=\sup\limits_{|\psi|=1} | G(\phi^{1})(\psi) - G (\phi^{2})(\psi) |_{D_{\beta}}\\
		 &\leq C \sup\limits_{|\psi|=1} | G(\phi^{1})(\psi) - G (\phi^{2})(\psi) |_{D(-A)}\\
		 & = C \sup\limits_{|\psi|=1}\Bigg| \int\limits_{\mathcal{O}} \left(g (\cdot, \phi^{1}(\widetilde{x}))- g (\cdot, \phi^{2}(\widetilde{x})\right)\psi(\widetilde{x}) ~d\widetilde{x}    \Bigg|_{D(-A)}.
			\end{align*}
			
	Therefore we estimate for $k=0,1,2$:
	\begin{align*}
&	\Bigg|\int\limits_{\mathcal{O}} \left(D^{k}_{1}g (\cdot, \phi^{1}(\widetilde{x}))-D^{k}_{1} g (\cdot, \phi^{2}(\widetilde{x})\right)\psi(\widetilde{x}) ~d\widetilde{x}    \Bigg|^{2}_{L^{2}(\mathcal{O})}\\
&= \int\limits_{\mathcal{O}} \Bigg|\int\limits_{\mathcal{O}} \left(D^{k}_{1}g (x, \phi^{1}(\widetilde{x}))-D^{k}_{1} g (x, \phi^{2}(\widetilde{x})\right)\psi(\widetilde{x}) ~d\widetilde{x}    \Bigg|^{2} dx\\
& \leq C \int\limits_{\mathcal{O}} \Bigg| \int\limits_{\mathcal{O}} |D_{2}D^{k}_{1}g |_{\infty}~ |\phi^{1}(\widetilde{x}) -\phi^{2}(\widetilde{x})|~ |\psi(\widetilde{x})|
 ~d\widetilde{x} \Bigg|^{2} dx\\
 & \leq C|\mathcal{O}|~ |D_{2}D^{k}_{1} g|_{\infty}^{2}~ |\phi^{1}-\phi^{2}|^{2}_{L^{2}(\mathcal{O})}~ |\psi|^{2} _{L^{2}(\mathcal{O})} .
	\end{align*}
	We finally obtain that
	\begin{align*}
	|G(\phi^{1}) - G (\phi^{2})|_{\mathcal{L}(W,D_{\beta})}\leq C |\phi^{1}-\phi^{2}|_{L^{2}(\mathcal{O})}, 
	\end{align*}
	where the constant $C$ depends on $|\mathcal{O}|$ and $g$.
\end{example}
	
In conclusion our theory can be applied to parabolic SPDEs driven by multiplicative fractional noise as described in (\ref{example:G}).	
		\begin{remark}
			Note that we do not make any additional assumptions on the eigenvalues of $A$. This is natural in the context of rough path theory, compare \cite{DeyaGubinelliTindel}. However, working with different techniques such as presenting the infinite-dimensional integral as a sum of one-dimensional integrals \cite{MaslowskiNualart}, (\cite{GarridoLuSchmalfuss2}) may lead to further assumptions on the asymptotic of the eigenvalues and implicitly to a restriction of the domains.
		\end{remark}
		\vspace*{5 mm}
		
		\textbf{\Large{Acknowledgments}}: The authors are grateful to M. J. Garrido-Atienza and B. Schmalfu\ss{} for helpful comments. AN acknowledges support by a DFG grant in the
		D-A-CH framework (KU 3333/2-1).
		\appendix
		\section{Preliminary results for the Sewing Lemma}
		The following facts will be employed in Section \ref{sectsl}.
		
		\begin{lemma}\label{lemma_hatdelta_uniqueness}
			Let $\rho>1$ and $\kappa \in C(\left[0,T\right],W)$ such that
			\begin{align*}
			\kappa_0 = 0 \quad \quad \text{ and} \quad \quad 
			\left|(\hat\delta \kappa)_{ts} \right| \leq c \left(t-s \right)^{\rho}.
			\end{align*}
			Then $\kappa \equiv 0$.
		\end{lemma}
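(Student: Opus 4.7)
The plan is to use a telescoping identity along a fine partition, exploiting the fact that $\kappa_0 = 0$ together with the super-linear H\"older bound on $\hat\delta \kappa$. The key observation is that $\hat\delta$ satisfies an iteration rule coming from the semigroup property: writing $(\hat\delta \kappa)_{ts} = \kappa_t - S(t-s)\kappa_s$, one can solve for $\kappa_t$ and iterate down to $\kappa_0$, obtaining at each step a controllable error term.

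More precisely, fix $t \in (0,T]$ and consider the uniform partition $t_i := it/n$ for $i = 0,\ldots,n$. Applying the defining identity $\kappa_{t_{i+1}} = S(t_{i+1}-t_i)\kappa_{t_i} + (\hat\delta\kappa)_{t_{i+1},t_i}$ inductively and using the semigroup property $S(t-t_{i+1})S(t_{i+1}-t_i) = S(t-t_i)$, one arrives at the telescoping representation
\begin{align*}
\kappa_t = S(t)\kappa_0 + \sum_{i=0}^{n-1} S(t-t_{i+1})(\hat\delta\kappa)_{t_{i+1},t_i}.
\end{align*}
Since $\kappa_0 = 0$ the first term vanishes. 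Using $\|S(r)\|_{\mathcal{L}(W)} \leq c_S$ for $r \in [0,T]$ together with the hypothesis $|(\hat\delta\kappa)_{t_{i+1},t_i}| \leq c\,(t/n)^\rho$, we get
\begin{align*}
|\kappa_t| \leq c_S \sum_{i=0}^{n-1} c \left(\frac{t}{n}\right)^\rho = c_S\, c\, t^\rho\, n^{1-\rho}.
\end{align*}

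Since $\rho > 1$ by assumption, $n^{1-\rho} \to 0$ as $n \to \infty$, and hence $\kappa_t = 0$ for every $t \in [0,T]$. The only real ingredients are the identity $(\hat\delta\kappa)_{ts} = \kappa_t - S(t-s)\kappa_s$, the semigroup property used in the telescoping, and the boundedness of $S(\cdot)$ on $[0,T]$; there is no genuine obstacle, only the slightly nonstandard telescoping (which replaces ordinary increments by $S$-weighted increments).
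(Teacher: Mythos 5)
Your proof is correct and is essentially identical to the paper's argument: both use the $S$-weighted telescoping identity $\kappa_t = \sum S(t-v)(\hat\delta\kappa)_{vu}$ over a fine partition of $[0,t]$ (the paper uses a general partition with mesh $\to 0$, you specialize to the uniform one with $n\to\infty$), bound each term by the hypothesis, and conclude via $\rho>1$. The only difference is that you spell out the induction behind the telescoping, which the paper takes for granted.
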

		\begin{proof}
			For any partition $\aP$ of $[0,t]$ we have
			\begin{align*}
			\kappa_t = (\hat\delta \kappa)_{t0} = \sum\limits_{[u,v]\in\aP} S(t-v)(\hat\delta \kappa)_{vu}.
			\end{align*}
			Hence we derive
			\begin{align*}
			\left|\kappa_t \right|\leq c_S c \left| \aP\right|^{\rho-1} t, ~~\mbox{ which tends to 0 }~\text{as } \left|\aP\right| \to 0.
			\end{align*}
			Consequently $\kappa \equiv 0$.
		\qed \\ \end{proof}
		\begin{lemma}\label{lemma_uniqueness}
			Let $0 < \alpha,~\beta<1$,  $\rho>1$ and $\kappa \in C(\left[0,T\right],W)$ such that
			\begin{align*}
			&\kappa_0 = 0, \\  
			&\left|(\hat\delta \kappa)_{ts} \right| \leq C \left(t-s \right)^{\alpha}, \\
			&\left|(\hat\delta \kappa)_{ts} \right| \leq C s^{-\beta} \left(t-s \right)^{\rho}, ~~\mbox{for } s\neq 0.
			\end{align*}
			 Then $\kappa \equiv 0$.
		\end{lemma}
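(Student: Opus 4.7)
The plan is to mimic the argument of Lemma~\ref{lemma_hatdelta_uniqueness}, but carefully handle the fact that the second estimate $|(\hat\delta\kappa)_{ts}|\le C s^{-\beta}(t-s)^\rho$ blows up at $s=0$. The key trick is to isolate a boundary interval $[0,\eps]$ where we fall back on the unweighted $\alpha$-H\"older bound.

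First, I would exploit $\kappa_0=0$ together with $\hat\delta_2\hat\delta\equiv 0$ (Lemma~\ref{lemma_preliminaries_hatdelta}(\ref{2.7i})). Writing $\kappa_t=(\hat\delta\kappa)_{t0}$ and iterating the identity $(\hat\delta\kappa)_{t0}=(\hat\delta\kappa)_{t\tau}+S(t-\tau)(\hat\delta\kappa)_{\tau 0}$ yields the telescoping representation
\begin{align*}
\kappa_t=\sum_{[u,v]\in\aP}S(t-v)(\hat\delta\kappa)_{vu}
\end{align*}
for any partition $\aP=\aP(0,t)$, exactly as in the proof of Lemma~\ref{lemma_hatdelta_uniqueness}.

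Next, fix $\eps\in(0,t)$ and restrict to partitions $\aP$ that contain $\eps$ as a node, so that $[0,\eps]$ is the first subinterval. I would split the sum above into the contribution of $[0,\eps]$ and the remainder. On $[0,\eps]$, the $\alpha$-H\"older estimate gives
\begin{align*}
\bigl|S(t-\eps)(\hat\delta\kappa)_{\eps,0}\bigr|\le c_S\,C\,\eps^{\alpha}.
\end{align*}
On every other subinterval the starting point satisfies $u\ge\eps>0$, so the weighted estimate applies and, using $u^{-\beta}\le\eps^{-\beta}$ together with $(v-u)^\rho\le|\aP|^{\rho-1}(v-u)$,
\begin{align*}
\sum_{\substack{[u,v]\in\aP\\ u\ge\eps}}\bigl|S(t-v)(\hat\delta\kappa)_{vu}\bigr|
\le c_S\,C\,\eps^{-\beta}|\aP|^{\rho-1}\sum_{[u,v]\in\aP}(v-u)
\le c_S\,C\,\eps^{-\beta}\,T\,|\aP|^{\rho-1}.
\end{align*}

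Letting $|\aP|\to 0$ with $\eps$ fixed (possible since $\eps$ is kept as a partition point), the second term vanishes because $\rho>1$, giving $|\kappa_t|\le c_S C\eps^{\alpha}$. Sending $\eps\to 0$ yields $\kappa_t=0$ for every $t\in(0,T]$, and combined with $\kappa_0=0$ this proves $\kappa\equiv 0$. The only mildly delicate point is verifying that the telescoping identity holds (following from Lemma~\ref{lemma_preliminaries_hatdelta}) and that the partition can be chosen to contain $\eps$ as a node while its mesh is still sent to zero independently of $\eps$; everything else is a direct bookkeeping exercise analogous to the Sewing Lemma estimates.
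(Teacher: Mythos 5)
Your approach is correct and genuinely differs from the paper's, though they share the same telescoping identity $\kappa_t=\sum_{[u,v]\in\aP}S(t-v)(\hat\delta\kappa)_{vu}$. The paper works with a single sequence of dyadic partitions of $[0,t]$: the first interval $[0,t/2^n]$ is handled with the $\alpha$-H\"older bound, and for the remaining intervals the sum $\sum_{u\neq 0}u^{-\beta}\,t\,2^{-n}$ is compared to the convergent integral $\int_0^t q^{-\beta}\,dq$ (this is where $\beta<1$ is used), yielding $|\kappa_t|\leq C t^\alpha 2^{-n\alpha}+C t^{\rho-\beta}2^{-(n-1)}\to 0$ in one pass. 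You instead freeze a buffer $[0,\eps]$, use the crude uniform bound $u^{-\beta}\leq\eps^{-\beta}$ away from zero, and pass to a two-step limit. The trade-off: you avoid the Riemann-sum/singular-integral comparison entirely (using only $\rho>1$ in the inner limit and nothing about $\beta$ beyond $\beta<1$ so that $\eps^\alpha\to 0$ dominates), at the cost of a nested limit and a partition constrained to contain $\eps$.

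One point in your writeup needs tightening. You write "Letting $|\aP|\to 0$ with $\eps$ fixed (possible since $\eps$ is kept as a partition point)"; but if $[0,\eps]$ is the first subinterval, the mesh of the whole partition satisfies $|\aP|\geq\eps$, so it cannot go to zero with $\eps$ fixed. What you actually use is only the mesh of $\aP$ restricted to $[\eps,t]$, since the inequality $(v-u)^\rho\leq\delta^{\rho-1}(v-u)$ is applied only to intervals with $u\geq\eps$. The fix is to introduce a separate parameter $\delta$ for the mesh of $\aP\cap[\eps,t]$, obtain $|\kappa_t|\leq c_S C\eps^\alpha+c_S C\eps^{-\beta}T\delta^{\rho-1}$, send $\delta\to 0$, then $\eps\to 0$. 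With this clarification your argument is complete.
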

		\begin{proof}
			Let $t \in \left[0,T\right]$ be arbitrary but fixed. 
			Consider $\aP_n$ a dyadic partition of $[0,t]$.
			We have
			\begin{align*}
			\kappa_t = (\hat\delta \kappa)_{t0} = \sum\limits_{[u,v]\in\aP_n} S(t-v)(\hat\delta \kappa)_{vu}.
			\end{align*}
			Consequently,
			\begin{align*}
			\left|\kappa_t \right| 
			&\leq C \sum\limits_{[u,v]\in\aP_n} \left|(\hat\delta \kappa)_{vu}\right| \\
		&	\leq C \frac{t^\alpha}{2^{n\alpha}} + C \sum\limits_{\substack{[u,v]\in\aP_n \\ u \neq 0}} u^{-\beta} \frac{t^\rho}{2^{n\rho}} \\
			&= C \frac{t^\alpha}{2^{n\alpha}} + C \frac{t^{\rho-1}}{2^{n-1}} \sum\limits_{\substack{[u,v]\in\aP_n \\ u \neq 0}} u^{-\beta} \frac{t}{2^{n}} \\
			&\leq  C \frac{t^\alpha}{2^{n\alpha}} + C \frac{t^{\rho-1}}{2^{n-1}} \int\limits_{0}^{t} q^{-\beta} dq\\
			&\leq C \frac{t^\alpha}{2^{n\alpha}} + C \frac{t^{\rho-\beta}}{2^{n-1}}.
			\end{align*}
			Since, this statement has to be valid for all $n \in \IN$ we must have $\kappa\equiv 0$.
		\qed \\ \end{proof}
		

		\begin{lemma} \label{lemma_appendix1}
			Given $0<\gamma,~\eps <1$ then there is a constant $C=C(\gamma,\eps)$ such that for all $n \in \IN$ the estimate
			\begin{align} \label{estimate_helpsum1}
			\sum\limits_{k=1}^{n-1} {k^{-\gamma} (n-k)^{-\eps}} \leq C \sum\limits_{k=0}^{n-1} {(k+1)^{-\gamma}(n-k)^{-\eps}}
			\end{align}
			holds true.
		\end{lemma}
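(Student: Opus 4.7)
The plan is to compare the two sums termwise by handling the discrepancy between $k^{-\gamma}$ on the left and $(k+1)^{-\gamma}$ on the right, and then absorbing the extra range of summation on the right into the constant.

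First I would observe that for every integer $k \geq 1$ we have $k+1 \leq 2k$, so $k \geq (k+1)/2$, which gives $k^{-\gamma} \leq 2^{\gamma}(k+1)^{-\gamma}$. Multiplying by $(n-k)^{-\epsilon}$ (which is positive and the same on both sides) and summing over $k=1,\dots,n-1$ yields
\begin{equation*}
\sum_{k=1}^{n-1} k^{-\gamma}(n-k)^{-\epsilon} \leq 2^{\gamma} \sum_{k=1}^{n-1}(k+1)^{-\gamma}(n-k)^{-\epsilon}.
\end{equation*}
Since the right-hand side of \eqref{estimate_helpsum1} extends the range of summation to include the additional nonnegative term at $k=0$, namely $1^{-\gamma} n^{-\epsilon}$, we have
\begin{equation*}
\sum_{k=1}^{n-1}(k+1)^{-\gamma}(n-k)^{-\epsilon} \leq \sum_{k=0}^{n-1}(k+1)^{-\gamma}(n-k)^{-\epsilon},
\end{equation*}
and chaining the two inequalities gives \eqref{estimate_helpsum1} with $C=2^{\gamma}$.

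There is no real obstacle here: the only subtlety is recognizing that the mismatch between $k^{-\gamma}$ and $(k+1)^{-\gamma}$ is bounded by a fixed multiplicative constant depending only on $\gamma$ (uniformly in $k\geq 1$), and that the shift in the starting index of the summation can be absorbed trivially because the extra term on the right is nonnegative. The constant $C$ is independent of $n$, as required.
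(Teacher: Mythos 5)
Your proof is correct, and it takes a genuinely different and considerably more direct route than the paper's. You bound the left-hand sum termwise: for $k\geq 1$ one has $k\geq (k+1)/2$, hence $k^{-\gamma}\leq 2^{\gamma}(k+1)^{-\gamma}$, and multiplying by the common positive factor $(n-k)^{-\epsilon}$ and summing over $k=1,\dots,n-1$ gives the left sum $\leq 2^{\gamma}\sum_{k=1}^{n-1}(k+1)^{-\gamma}(n-k)^{-\epsilon}$; enlarging the range of summation to include the nonnegative $k=0$ term finishes the job, with the explicit constant $C=2^{\gamma}$, which in fact does not even depend on $\epsilon$. The paper's proof instead splits off the $k=0$ term from the right-hand side, reduces the claim to showing
\begin{align*}
\sum_{k=1}^{n-1}\bigl[k^{-\gamma}-(k+1)^{-\gamma}\bigr](n-k)^{-\epsilon}\leq C\,n^{-\epsilon},
\end{align*}
then replaces the difference of powers by $\int_{k}^{k+1}\gamma x^{-\gamma-1}\,dx$, compares the sum to the integral $\int_{1}^{n}\gamma x^{-\gamma-1}(n-x)^{-\epsilon}\,dx$, and finally estimates that integral via a substitution and a case distinction on $\gamma+\epsilon\lessgtr 1$. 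What the paper's argument establishes is a strictly stronger quantitative statement, namely that the deficit between the two sums (after removing the $k=0$ term) is of order $n^{-\epsilon}$; but nothing in the paper actually uses that extra precision, so your termwise bound is the cleaner and shorter way to the stated lemma.
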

		\begin{proof}
			For the right sum we see
			\begin{align*}
			\sum\limits_{k=0}^{n-1} {(k+1)^{-\gamma}(n-k)^{-\eps}} = n^{-\eps} + \sum\limits_{k=1}^{n-1} {(k+1)^{-\gamma}(n-k)^{-\eps}}
			\end{align*}
			Hence, \eqref{estimate_helpsum1} is fulfilled if
			\begin{align*}
			\sum\limits_{k=1}^{n-1} {\left[k^{-\gamma} - (k+1)^{-\gamma} \right](n-k)^{-\eps}} \leq C n^{-\eps}
			\end{align*}
			Furthermore we estimate
			\begin{align*}
			&\sum\limits_{k=1}^{n-1} {\left[k^{-\gamma} - (k+1)^{-\gamma} \right](n-k)^{-\eps}} 
			= \sum\limits_{k=1}^{n-1} {\int\limits_{k}^{k+1}{\gamma x^{-\gamma-1}} dx \:(n-k)^{-\eps}} \\
			\leq &\sum\limits_{k=1}^{n-1} {\int\limits_{k}^{k+1}{\gamma x^{-\gamma-1} (n-x)^{-\eps}}dx} 
			= \int\limits_{1}^{n}{\gamma x^{-\gamma-1} (n-x)^{-\eps}}dx.
			\end{align*}
			Therefore it is sufficient to show that for all $u \geq 1$ it holds
			\begin{align*}
			g(u,\gamma,\eps):= \int\limits_{1}^{u}{\gamma x^{-\gamma-1} (u-x)^{-\eps}}dx \leq C u^{-\eps}.
			\end{align*}
			To prove this we define 
			\begin{align*}
			h(u,\gamma,\eps):=&u^{\gamma+\eps}\, g(u,\gamma,\eps) \\
			=&u^{\gamma+\eps} \int\limits_{1}^{u}{\gamma x^{-\gamma-1} (u-x)^{-\eps}}dx \\
			= &\int\limits_{1}^{u} {\gamma \left(\frac{u}{x}\right)^{\gamma-1} \left(1-\frac{x}{u} \right)^{-\eps}} \frac{u}{x^2} dx \\
			= &- \int\limits_{1}^{u} {\gamma \left(\frac{u}{x}\right)^{\gamma-1} \left(1-\frac{x}{u} \right)^{-\eps}} d\left(\frac{u}{x} \right) \\
			= &\int\limits_{1}^{u} {\gamma x^{\gamma-1} \left(1-\frac{1}{x}\right)^{-\eps}} dx.
			\end{align*}
			If $\gamma+\eps \leq 1$ consider
			\begin{align*}
			h(u,\gamma,1-\gamma) = \int\limits_{1}^{u} {\gamma \left(x-1\right)^{\gamma-1}} dx 
			= (u-1)^{\gamma} \leq u^\gamma.
			\end{align*}
			Since $0< 1-\frac{1}{x}<1 $ for all $x\geq 1$ we see that $h$ is monotonously increasing in $\eps$.
			Hence we obtain for $0 \leq \eps \leq 1-\gamma$
			\begin{align*}
			&h(u,\gamma,\eps) \leq h(u,\gamma,1-\gamma) \leq u^\gamma, 
			\end{align*}
			consequently
			\begin{align*}
			 &g(u,\gamma,\eps) \leq u^{-\eps}.
			\end{align*}
			If $\gamma+\eps > 1$ we estimate
			\begin{align*}
			h(u,\gamma,\eps) 
			&= \int\limits_{1}^{u} {\gamma x^{\gamma-1} \left(1-\frac{1}{x}\right)^{-\eps}} dx \\
			&= \int\limits_{1}^{u} {\gamma x^{\gamma+\eps-1} \left(x-1\right)^{-\eps}} dx \\
			&\leq \gamma u^{\gamma+\eps-1} \int\limits_{1}^{u} {\left(x-1\right)^{-\eps}} dx \\
			&= \frac{\gamma}{1-\eps} u^{\gamma+\eps-1} \left(u-1 \right)^{1-\eps} 
			\leq \frac{\gamma}{1-\eps} u^\gamma,
			\end{align*}
			which directly yields
			\begin{align*}
			g(u,\gamma,\eps) \leq \frac{\gamma}{1-\eps} u^{-\eps}.
			\end{align*}
		\qed \\ \end{proof}
\section{Fundamental estimates for the fixed-point argument}
In the next deliberations we illustrate a technique which is required in Section \ref{sectfp}. This is based on the division property for smooth functions, see p. 109 in \cite{FritzHairer}. This is also used for rough SDEs, in order to estimate the difference of the norm of two controlled rough paths, consult \cite[Chapter 8]{FritzHairer}, especially the proof of Theorem 8.4 in \cite{FritzHairer}.\\

In the following $c_{G}$ stands for a universal constant which exclusively depends on $G$ and its derivatives. The next result can immediately be obtained applying the mean value theorem.
		\begin{lemma}\label{lemma_G1}
			Let $\hat{W}$ be a separable Banach space, $G \in C_b^2(W,\hat{W})$ and $x_1,x_2,x_3,x_4 \in W$. The following estimate
			\begin{align}
			\begin{split}
			&\left|G(x_2)-G(x_1) - G(x_4)+G(x_3)\right| \\
			&\leq c_G \left|x_2-x_1-x_4+x_3\right| + c_G\left|x_4-x_3\right| \left(\left|x_3-x_1\right| + \left|x_4-x_2\right| \right)
			\end{split}
			\end{align}
			holds true.  
		\end{lemma}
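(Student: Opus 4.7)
The plan is to use the fundamental theorem of calculus twice, exploiting the linearity of $DG$ in its argument for one part and the Lipschitz continuity of $DG$ (guaranteed by $G\in C_b^2$) for the other. This is the classical ``division property'' trick for second-order finite differences.

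First I would write, for $\xi_t := x_1 + t(x_2-x_1)$ and $\eta_t := x_3 + t(x_4-x_3)$,
\begin{equation*}
G(x_2)-G(x_1) = \int_0^1 DG(\xi_t)(x_2-x_1)\, dt, \qquad G(x_4)-G(x_3) = \int_0^1 DG(\eta_t)(x_4-x_3)\, dt,
\end{equation*}
and then decompose the telescoped difference in the way one typically does when comparing two controlled rough paths, namely
\begin{equation*}
G(x_2)-G(x_1)-G(x_4)+G(x_3) = \int_0^1 DG(\xi_t)\bigl[(x_2-x_1)-(x_4-x_3)\bigr]\, dt + \int_0^1 \bigl[DG(\xi_t)-DG(\eta_t)\bigr](x_4-x_3)\, dt.
\end{equation*}

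Next, I would estimate the two summands separately. The first one is bounded directly by $\|DG\|_\infty \cdot |x_2-x_1-x_4+x_3|$, which produces the first term on the right-hand side of the claim. For the second, I would apply the mean value theorem to $DG$ (permitted since $G\in C_b^2$) to obtain
\begin{equation*}
|DG(\xi_t)-DG(\eta_t)| \leq \|D^2G\|_\infty\, |\xi_t-\eta_t| = \|D^2G\|_\infty\, |(1-t)(x_1-x_3)+t(x_2-x_4)|,
\end{equation*}
then bound by $(1-t)|x_3-x_1|+t|x_4-x_2|$, multiply by $|x_4-x_3|$, and integrate over $t\in[0,1]$. The integration yields the factor $\tfrac{1}{2}$, which is absorbed into $c_G$, producing the second term $c_G|x_4-x_3|(|x_3-x_1|+|x_4-x_2|)$.

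There is essentially no real obstacle here; the step that deserves care is just the algebraic decomposition at the start, because the ordering of the four points matters for getting $|x_4-x_3|$ as the distinguished factor in the second term (as opposed to, say, $|x_2-x_1|$). One could equally well pull out $|x_2-x_1|$ and the factor $(|x_3-x_1|+|x_4-x_2|)$ by writing the decomposition the other way around, which confirms that the estimate is essentially symmetric and thus robust to how one sets it up.
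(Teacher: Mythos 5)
Your proof is correct and is exactly the mean-value-theorem/fundamental-theorem-of-calculus argument the paper has in mind: it states that the lemma "can immediately be obtained applying the mean value theorem" (citing Lemma 7.1 of Nualart--R\u a\c scanu) without writing out the details you supply. Your telescoping decomposition and the Lipschitz bound on $DG$ via $\|D^2G\|_\infty$ are the standard "division property" computation the authors rely on.
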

				Keeping this in mind we derive the following result. 
		\begin{corollary}\label{corollary_G1}
			Let $y^i \in C^{\beta,\beta}([0,T];W)$ for $i=1,2$ and $G \in C_b^2(W,\hat{W})$. Then, for all $0< s < t \leq T$ we have 
			\begin{align}
			\begin{split}
			&\left|G(y^1_t)-G(y^1_s) - G(y^2_t)+G(y^2_s) \right| \\
			&\leq c_G\left( \ltn y^1-y^2\rtn_{\beta,\beta} + \ltn y^2\rtn_{\beta,\beta} \left\|y^1-y^2 \right\|_{\infty} \right) 
			s^{-\beta} \left(t-s \right)^{\beta},
		\end{split}	\label{estimate_G1beta}
					\end{align}
			as well as
			\begin{align}		
			\begin{split}
			&\left|G(y^1_t)-G(y^1_s) - G(y^2_t)+G(y^2_s) \right|\\
			&\leq  c_G \left( \left\| y^1-y^2\right\|_{\infty} + \left\| y^2\right\|_{\infty} \left\|y^1-y^2 \right\|_{\infty} \right).
			\end{split}
			\label{estimate_G1infty}
			\end{align} 
		\end{corollary}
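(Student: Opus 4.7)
The plan is to view this corollary as a direct specialization of Lemma~\ref{lemma_G1} with the four points $x_1=y^1_s$, $x_2=y^1_t$, $x_3=y^2_s$, $x_4=y^2_t$. Under this identification the differences $x_2-x_1$, $x_4-x_3$, $x_3-x_1$, $x_4-x_2$ and $x_2-x_1-x_4+x_3$ become respectively $(\delta y^1)_{ts}$, $(\delta y^2)_{ts}$, $-(y^1-y^2)_s$, $-(y^1-y^2)_t$, and $\delta(y^1-y^2)_{ts}$, so Lemma~\ref{lemma_G1} immediately gives
\begin{align*}
\bigl|G(y^1_t)-G(y^1_s) - G(y^2_t)+G(y^2_s)\bigr|
\leq c_G \bigl|\delta(y^1-y^2)_{ts}\bigr|
+ c_G \bigl|(\delta y^2)_{ts}\bigr|\bigl(|(y^1-y^2)_s|+|(y^1-y^2)_t|\bigr).
\end{align*}

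From this point, each of \eqref{estimate_G1beta} and \eqref{estimate_G1infty} follows by substituting the appropriate bound for the three factors on the right-hand side. For \eqref{estimate_G1beta}, I would control $|\delta(y^1-y^2)_{ts}|$ by $\ltn y^1-y^2\rtn_{\beta,\beta}\,s^{-\beta}(t-s)^{\beta}$, control $|(\delta y^2)_{ts}|$ by $\ltn y^2\rtn_{\beta,\beta}\,s^{-\beta}(t-s)^{\beta}$, and bound the pointwise factors $|(y^1-y^2)_s|,|(y^1-y^2)_t|$ by $\|y^1-y^2\|_\infty$; collecting the two resulting contributions yields the desired form. For \eqref{estimate_G1infty}, I would simply bound all three factors by the uniform norms ($\|y^1-y^2\|_\infty$ on the first and last two, $\|y^2\|_\infty$ on $|(\delta y^2)_{ts}|$ after using the triangle inequality), which gives the stated estimate.

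There is essentially no obstacle here; the only point worth a moment's care is checking that the weight $s^{-\beta}(t-s)^{\beta}$ in \eqref{estimate_G1beta} indeed arises only from the $\ltn\cdot\rtn_{\beta,\beta}$ factors applied to $\delta(y^1-y^2)$ and $\delta y^2$, while the pointwise differences $y^1-y^2$ evaluated at $s$ and $t$ are handled by the sup-norm — this is why the product $\ltn y^2\rtn_{\beta,\beta}\|y^1-y^2\|_\infty$ naturally appears with the same weight as $\ltn y^1-y^2\rtn_{\beta,\beta}$ alone, and the two contributions can be combined into the single bracketed expression shown in the statement.
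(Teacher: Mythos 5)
Your proposal is correct and follows exactly the paper's argument: the paper also applies Lemma~\ref{lemma_G1} with $x_1=y^1_s$, $x_2=y^1_t$, $x_3=y^2_s$, $x_4=y^2_t$, and then derives each of the two bounds by choosing, term by term, between the weighted H\"older seminorm $\ltn\cdot\rtn_{\beta,\beta}$ and the sup-norm $\|\cdot\|_\infty$. No gaps.
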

		\begin{proof}
			Applying Lemma \ref{lemma_G1}, see also Lemma~7.1~in~\cite{NualartRascanu}, with $x_1=y^1_s$, $x_2=y^1_t$, $x_3=y^2_s$ and $x_4=y^2_t$ we infer
			\begin{align*}
			&\left|G(y^1_t)-G(y^1_s) - G(y^2_t)+G(y^2_s) \right|\\ 
			&\leq  c_G \left|y^1_t-y^1_s-y^2_t+y^2_s \right| 
			+c_G\left|y^2_t-y^2_s\right| \left(\left|y^1_s-y^2_s\right| + \left|y^1_t-y^2_t\right| \right)
			\end{align*}
			We have two possibilities to estimate this expression. On the one hand we have
			\begin{align*}
			&\left|G(y^1_t)-G(y^1_s) - G(y^2_t)+G(y^2_s) \right|\\ 
			&\leq c_G \left|y^1_t-y^1_s-y^2_t+y^2_s \right| 
			+c_G\left|y^2_t-y^2_s\right| \left(\left|y^1_s-y^2_s\right| + \left|y^1_t-y^2_t\right| \right) \\
			&\leq c_G\left( \ltn y^1-y^2\rtn_{\beta,\beta} + \ltn y^2\rtn_{\beta,\beta} \left\|y^1-y^2 \right\|_{\infty} \right) 
			s^{-\beta} \left(t-s \right)^{\beta}.
			\end{align*}
			On the other hand we obtain
			\begin{align*}
			&\left|G(y^1_t)-G(y^1_s) - G(y^2_t)+G(y^2_s) \right|\\ 
			& \leq c_G \left|y^1_t-y^1_s-y^2_t+y^2_s \right| 
			+c_G\left|y^2_t-y^2_s\right| \left(\left|y^1_s-y^2_s\right| + \left|y^1_t-y^2_t\right| \right) \\
			&\leq c_G \left( \left\| y^1-y^2\right\|_{\infty} + \left\| y^2\right\|_{\infty} \left\|y^1-y^2 \right\|_{\infty} \right).
			\end{align*}
			This proves the statement.
		\qed  \end{proof}
		

		\begin{lemma}\label{lemma_G2}
			Let $G \in C_b^3(W,\hat{W})$ and $x_1,x_2,x_3,x_4 \in W$. Then 
			\begin{align}
			\begin{split}
			&\left|G(x_2)-G(x_1)-DG(x_1)(x_2-x_1) - G(x_4)+G(x_3)+DG(x_3)(x_4-x_3) \right| \\
			&\leq c_G \left(\left|x_2-x_1\right|+\left|x_4-x_3\right| \right) \left|x_2-x_1-x_4+x_3\right| \\
			&+ c_{G}\left|x_4-x_3\right|^2 \left(\left|x_3-x_1\right| + \left|x_4-x_2\right| \right).
			\end{split}
			\end{align}  
		\end{lemma}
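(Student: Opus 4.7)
The estimate is a second-order analogue of Lemma~\ref{lemma_G1}: it controls the second difference of the first-order Taylor remainder of $G$. The natural approach is therefore to represent the remainder by an integral form and split the resulting expression by adding and subtracting a common intermediate term.

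Concretely, I would introduce the Taylor remainder
\begin{align*}
\phi(s,t) := G(t) - G(s) - DG(s)(t-s) = \int_0^1 (1-r)\, D^2 G\bigl(s + r(t-s)\bigr)(t-s,\,t-s)\, dr,
\end{align*}
so that the left-hand side of the claim equals $\lvert \phi(x_1,x_2) - \phi(x_3,x_4) \rvert$. Setting $a := x_2-x_1$, $b := x_4-x_3$, and $p_r := x_1 + r\,a$, $q_r := x_3 + r\,b$, I would split the integrand as
\begin{align*}
D^2 G(p_r)(a,a) - D^2 G(q_r)(b,b)
= \bigl[D^2 G(p_r)(a,a) - D^2 G(p_r)(b,b)\bigr] + \bigl[D^2 G(p_r) - D^2 G(q_r)\bigr](b,b).
\end{align*}

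For the first bracket I would exploit bilinearity and symmetry of $D^2 G$, writing $D^2 G(p_r)(a,a) - D^2 G(p_r)(b,b) = D^2 G(p_r)(a-b,\,a+b)$, which by boundedness of $D^2 G$ yields a contribution of order $c_G |a-b|(|a|+|b|)$. For the second bracket I would apply the mean value theorem to $D^2 G$ via $D^3 G$, giving a factor $|p_r - q_r|\,|b|^2$, and then use $p_r - q_r = (1-r)(x_1-x_3) + r(x_2-x_4)$ so that $|p_r - q_r| \le |x_3-x_1| + |x_4-x_2|$ uniformly in $r\in[0,1]$. Integrating $(1-r)\,dr$ over $[0,1]$ contributes only a harmless constant.

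Combining these two estimates under the integral produces exactly the two terms in the statement, with a constant depending only on $|D^2 G|_\infty$ and $|D^3 G|_\infty$, both absorbed into $c_G$ by the $C_b^3$ assumption on $G$. There is no real obstacle here; the only point requiring minor care is the algebraic rewriting $D^2G(p_r)(a,a)-D^2G(p_r)(b,b)=D^2G(p_r)(a-b,a+b)$, which uses symmetry of the second derivative to produce the factor $|a-b| = |x_2-x_1-x_4+x_3|$ that appears on the right-hand side.
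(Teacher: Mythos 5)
Your proof is correct and complete. The paper does not actually supply a proof here: it simply states ``For a complete proof, see p.~2716 in \cite{HuNualart}.'' So there is nothing in the paper to compare against line by line; what you have given is a self-contained argument of the expected flavour. The two load-bearing steps are both sound: (i) writing the first-order Taylor remainder in integral form, $\phi(s,t)=\int_0^1(1-r)D^2G(s+r(t-s))(t-s,t-s)\,dr$, so the left-hand side becomes $\bigl|\int_0^1(1-r)\bigl[D^2G(p_r)(a,a)-D^2G(q_r)(b,b)\bigr]dr\bigr|$; and (ii) the split $D^2G(p_r)(a,a)-D^2G(q_r)(b,b)=\bigl[D^2G(p_r)(a,a)-D^2G(p_r)(b,b)\bigr]+\bigl[D^2G(p_r)-D^2G(q_r)\bigr](b,b)$ followed by the symmetry identity $D^2G(p_r)(a,a)-D^2G(p_r)(b,b)=D^2G(p_r)(a-b,a+b)$ and the mean-value bound $|D^2G(p_r)-D^2G(q_r)|\le \|D^3G\|_\infty|p_r-q_r|$. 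The convex-combination formula $p_r-q_r=(1-r)(x_1-x_3)+r(x_2-x_4)$ is also correct, giving the uniform bound $|p_r-q_r|\le|x_3-x_1|+|x_4-x_2|$. This is precisely the shape of argument one finds in the cited Hu--Nualart reference (and in Lemma~7.1 of Nualart--R\u a\c scanu, which the paper cites for the companion Lemma~\ref{lemma_G1}), so you are on the standard route; the only thing the paper itself adds is the pointer to the literature.
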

	For a complete proof, see p.~2716 in~\cite{HuNualart}.\\[1ex]
		This helps us further obtain an essential estimate for our fixed-point argument.
		\begin{corollary}\label{corollary_G2}
			Given $y^i \in C^{\beta,\beta}([0,T];W)$ for $i=1,2$ and $G \in C_b^3(W,\hat{W})$. Then the following estimates are valid for all $0< s < t \leq T$:
			\begin{align}
			\begin{split}
			&\left|G(y^1_s)-G(y^1_t)+DG(y_s^1)(y_t^1-y_s^1) - \left(G(y^2_s)-G(y^2_t)+DG(y_s^2)(y_t^2-y_s^2)\right) \right|\\
			&\leq 
			 c_G \left[\left(\ltn y^1 \rtn_{\beta,\beta} + \ltn y^2\rtn_{\beta,\beta}\right) \ltn y^1-y^2\rtn_{\beta,\beta} 
			+ \ltn y^2\rtn_{\beta,\beta}^2 \left\|y^1-y^2 \right\|_{\infty} \right] s^{-2\beta} \left(t-s \right)^{2\beta},
			\end{split}\label{estimate_G2beta} 
			\end{align}
			as well as
			\begin{align}
			\begin{split}
			&\left|G(y^1_s)-G(y^1_t)+DG(y_s^1)(y_t^1-y_s^1) - \left(G(y^2_s)-G(y^2_t)+DG(y_s^2)(y_t^2-y_s^2)\right) \right|\\
			&\leq 
			 c_G \left(\ltn y^1\rtn_{\beta,\beta} + \ltn y^2\rtn_{\beta,\beta} + \ltn y^2\rtn_{\beta,\beta} \left\| y^2\right\|_{\infty} \right) \left\|y^1-y^2 \right\|_{\infty} s^{-\beta} \left(t-s \right)^{\beta}.
			\end{split}\label{estimate_G2infty}
			\end{align} 
		\end{corollary}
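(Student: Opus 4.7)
The plan is to invoke Lemma~\ref{lemma_G2} with the substitution $x_1=y^1_s$, $x_2=y^1_t$, $x_3=y^2_s$, $x_4=y^2_t$. Since the quantity inside the absolute value on the left of \eqref{estimate_G2beta}/\eqref{estimate_G2infty} differs from $G(x_2)-G(x_1)-DG(x_1)(x_2-x_1) - (G(x_4)-G(x_3)-DG(x_3)(x_4-x_3))$ only by an overall sign, this reduces everything to controlling
\[
c_G\,(|y^1_t-y^1_s|+|y^2_t-y^2_s|)\,|y^1_t-y^1_s-y^2_t+y^2_s| + c_G\,|y^2_t-y^2_s|^2\,(|y^1_s-y^2_s|+|y^1_t-y^2_t|).
\]
The rest of the proof is purely a choice of which of the five remaining factors to measure in $C^{\beta,\beta}$ and which in $C^\infty$, with the two inequalities corresponding to two different such choices.

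For~\eqref{estimate_G2beta} I would extract the maximal Hölder weight from every ``difference of a single path'': use $|y^i_t-y^i_s|\leq\ltn y^i\rtn_{\beta,\beta}\,s^{-\beta}(t-s)^\beta$ for $i=1,2$, and $|y^1_t-y^1_s-y^2_t+y^2_s|\leq\ltn y^1-y^2\rtn_{\beta,\beta}\,s^{-\beta}(t-s)^\beta$. The only ``difference of the two paths at a common time'', namely $|y^1_s-y^2_s|+|y^1_t-y^2_t|$, is bounded trivially by $2\|y^1-y^2\|_\infty$. Plugging these estimates in and collecting the common factor $s^{-2\beta}(t-s)^{2\beta}$ gives precisely the two summands $(\ltn y^1\rtn_{\beta,\beta}+\ltn y^2\rtn_{\beta,\beta})\ltn y^1-y^2\rtn_{\beta,\beta}$ and $\ltn y^2\rtn_{\beta,\beta}^2\|y^1-y^2\|_\infty$ claimed in~\eqref{estimate_G2beta}.

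For~\eqref{estimate_G2infty} I would instead distribute the regularity asymmetrically. The two ``differences between the paths'' $|y^1_t-y^1_s-y^2_t+y^2_s|$ and $|y^1_s-y^2_s|+|y^1_t-y^2_t|$ are now both bounded by $2\|y^1-y^2\|_\infty$, while the quadratic factor $|y^2_t-y^2_s|^2$ is split unevenly as $\bigl(\ltn y^2\rtn_{\beta,\beta}\,s^{-\beta}(t-s)^\beta\bigr)\cdot\bigl(2\|y^2\|_\infty\bigr)$. This asymmetric split of the quadratic factor is the only non-mechanical step and is what allows one to trade the ``missing'' second Hölder weight for a factor $\|y^2\|_\infty$, so that both resulting contributions become homogeneous of degree $s^{-\beta}(t-s)^\beta$ in the regularity. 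After regrouping one obtains the three summands $\ltn y^1\rtn_{\beta,\beta}$, $\ltn y^2\rtn_{\beta,\beta}$ and $\ltn y^2\rtn_{\beta,\beta}\|y^2\|_\infty$, each multiplied by $\|y^1-y^2\|_\infty\,s^{-\beta}(t-s)^\beta$, i.e.\ exactly~\eqref{estimate_G2infty}. No step poses any real obstacle beyond this bookkeeping.
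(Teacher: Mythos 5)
Your proposal is correct and follows exactly the paper's proof: apply Lemma~\ref{lemma_G2} with $x_1=y^1_s$, $x_2=y^1_t$, $x_3=y^2_s$, $x_4=y^2_t$, then bound the resulting factors by the $\triplenorm\cdot\triplenorm_{\beta,\beta}$ seminorm or the sup norm in the two indicated ways (including the asymmetric split of $|y^2_t-y^2_s|^2$ for \eqref{estimate_G2infty}). No gap; this matches the paper's argument step for step.
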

		\begin{proof}
			As previously argued, we apply Lemma \ref{lemma_G2} with $x_1=y_s^1$, $x_2=y_t^1$, $x_3=y_s^2$ and $x_4=y_t^2$. This results in
			\begin{align*}
			&\left|G(y^1_s)-G(y^1_t)+DG(y_s^1)(y_t^1-y_s^1) - \left(G(y^2_s)-G(y^2_t)+DG(y_s^2)(y_t^2-y_s^2)\right) \right|\\
			&\leq c_G  \left(\left|y_t^1-y_s^1 \right|+\left|y_t^2-y_s^2 \right|\right) \left|y_t^1-y_s^1-y_t^2+y_s^2 \right| 
			+ c_G\left|y_t^2-y_s^2 \right|^2 \left( \left|y_s^1-y_s^2\right| + \left|y_t^1-y_t^2 \right| \right). 
			\end{align*}
			Again, we use have two possibilities to obtain the following inequalities.
			First of all we infer that
			\begin{align*}
			&\left|G(y^1_s)-G(y^1_t)+DG(y_s^1)(y_t^1-y_s^1) - \left(G(y^2_s)-G(y^2_t)+DG(y_s^2)(y_t^2-y_s^2)\right) \right|\\
			&\leq c_G  \left(\left|y_t^1-y_s^1 \right|+\left|y_t^2-y_s^2 \right|\right) \left|y_t^1-y_s^1-y_t^2+y_s^2 \right| 
			+ c_G\left|y_t^2-y_s^2 \right|^2 \left( \left|y_s^1-y_s^2\right| + \left|y_t^1-y_t^2 \right| \right) \\
			&\leq c_G \left(\ltn y^1 \rtn_{\beta,\beta} + \ltn y^2\rtn_{\beta,\beta} \right) \ltn y^1-y^2 \rtn_{\beta,\beta} s^{-2\beta} \left(t-s \right)^{2\beta} 
			+c_G \ltn y^2 \rtn_{\beta,\beta}^2 \left\|y^1-y^2\right\|_{\infty} s^{-2\beta} \left(t-s \right)^{2\beta} \\
			&\leq c_G \left[\left(\ltn y^1 \rtn_{\beta,\beta} + \ltn y^2\rtn_{\beta,\beta}\right) \ltn y^1-y^2\rtn_{\beta,\beta} 
			+ \ltn y^2\rtn_{\beta,\beta}^2 \left\|y^1-y^2 \right\|_{\infty} \right] s^{-2\beta} \left(t-s \right)^{2\beta}.
			\end{align*} 
			On the other hand we finally get
			\begin{align*}
			&\left|G(y^1_s)-G(y^1_t)+DG(y_s^1)(y_t^1-y_s^1) - \left(G(y^2_s)-G(y^2_t)+DG(y_s^2)(y_t^2-y_s^2)\right) \right|\\
			& \leq c_G  \left(\left|y_t^1-y_s^1 \right|+\left|y_t^2-y_s^2 \right|\right) \left|y_t^1-y_s^1-y_t^2+y_s^2 \right| 
			+ c_G\left|y_t^2-y_s^2 \right|^2 \left( \left|y_s^1-y_s^2\right| + \left|y_t^1-y_t^2 \right| \right)\\
			&\leq c_G \left(\ltn y^1\rtn_{\beta,\beta} + \ltn y^2\rtn_{\beta,\beta} \right) \left\|y^1-y^2 \right\|_{\infty} s^{-\beta} \left(t-s \right)^{\beta}
			+ c_G \ltn y^2\rtn_{\beta,\beta} \left\| y^2\right\|_{\infty} \left\|y^1-y^2 \right\|_{\infty} s^{-\beta} \left(t-s \right)^{\beta} \\
			&\leq c_G \left(\ltn y^1\rtn_{\beta,\beta} + \ltn y^2\rtn_{\beta,\beta} + \ltn y^2\rtn_{\beta,\beta} \left\| y^2\right\|_{\infty} \right) \left\|y^1-y^2 \right\|_{\infty} s^{-\beta} \left(t-s \right)^{\beta}.
			\end{align*}
		\qed \\ \end{proof}

\end{document}